\def\nc{\newcommand}
 \def\Om{\Omega}
\def\ra{\rightarrow}
\nc\pa{\partial}
\nc\CC{\mathbb{C}}
\nc\RR{\mathbb{R}}
\nc\QQ{\mathbb{Q}}
\nc\ZZ{\mathbb{Z}}
\nc\NN{\mathbb{N}}
\newcommand{\Rn}{\mathbb R^n}
\newcommand{\gp}{\Gamma_p}
\newcommand{\Hn}{\mathbb H^n}
\nc\m[1]{\left| #1\right|}
\nc\norm[1]{\left\| #1\right\|}
\newcommand{\bG}{\bf{G}}
\newcommand{\p}{\partial}
\newtheorem{theorem}{Theorem}[section]
\newtheorem{lemma}[theorem]{Lemma}
\newtheorem{corollary}[theorem]{Corollary}
\newtheorem{definition}[theorem]{Definition}
\newtheorem{remark}[theorem]{Remark}        
\numberwithin{equation}{section}
\date{\today}
\begin{document}

\title[Inequalities of Hardy-Sobolev type in CC spaces]
{Inequalities of Hardy-Sobolev type in Carnot-Carath\'eodory spaces}

\author{Donatella Danielli}
\address{Department of Mathematics,
Purdue University, 150 N. University Street, West Lafayette, IN
47906, USA} \email{danielli@math.purdue.edu}
\thanks{First author supported in part by NSF CAREER Grant, DMS-0239771}

\author{Nicola Garofalo}
\address{Department of Mathematics,
Purdue University, 150 N. University Street,
West Lafayette, IN  47906, USA}
\email{garofalo@math.purdue.edu}
\thanks{Second author supported in part by NSF Grant DMS-07010001}

\author{Nguyen Cong Phuc}
\address{Department of Mathematics, Purdue University, 150 N. University Street,
West Lafayette, IN  47906, USA}
\email{pcnguyen@math.purdue.edu}

\begin{abstract}
We consider various types of Hardy-Sobolev
inequalities on a Carnot-Carath\'eodory space $(\Om, d)$ associated
to a system of smooth vector fields $X=\{X_1, X_2,\dots,X_m\}$ on
$\RR^n$ satisfying the H\"ormander's finite rank condition  $rank\
Lie[X_1,...,X_m] \equiv n$. One of our main concerns is the trace
inequality
\begin{equation*}
\int_{\Om}|\varphi(x)|^{p}V(x)dx\leq
C\int_{\Om}|X\varphi|^{p}dx,\qquad \varphi\in C^{\infty}_{0}(\Om),
\end{equation*}
where $V$ is a general weight, i.e., a nonnegative locally
integrable function on $\Om$, and $1<p<+\infty$. Under sharp
geometric assumptions on the domain $\Om\subset \Rn$ that can be measured equivalently
in terms of subelliptic capacities or Hausdorff contents,  we establish
various forms of Hardy-Sobolev  type inequalities.
\end{abstract}

\maketitle

\section{Introduction}
A celebrated inequality of S. L. Sobolev \cite{So} states that for
any $1<p<n$ there exists a constant $S(n,p)>0$ such that for every
function $\varphi\in C^\infty_0(\Rn)$ \begin{equation}\label{sob}
\left(\int_{\Rn} |\varphi|^{\frac{np}{n-p}}
dx\right)^{\frac{n-p}{np}}\ \leq\ S(n,p)\ \left(\int_{\Rn}
|D\varphi|^p dx\right)^{\frac{1}{p}}\ .
\end{equation}

Such an inequality admits the following extension, see \cite{CKN}. For
$0 \leq s \leq p$ define the critical exponent relative to $s$ as
follows \[ p^*(s)\ =\ p\ \frac{n-s}{n-p}\ .
\]
Then for every $\varphi \in C^\infty_0(\Rn)$ one has
\begin{equation}\label{hs}
\left(\int_{\Rn} \frac{|\varphi|^{p^*(s)}}{|x|^s}
dx\right)^{\frac{1}{p^*(s)}} \leq
\left(\frac{p}{n-p}\right)^{\frac{s}{p^*(s)}}
S(n,p)^{\frac{n(p-s)}{p(n-s)}} \left(\int_{\Rn} |D \varphi|^p
dx\right)^{\frac{1}{p}}\ .
\end{equation}

In particular, when $s=0$, then \eqref{hs} is just the Sobolev
embedding \eqref{sob}, whereas for $s=p$ we obtain the Hardy
inequality
\begin{equation}\label{h}
\int_{\Rn} \frac{|\varphi|^{p}}{|x|^p} dx\ \leq\
\left(\frac{p}{n-p}\right)^p\ \int_{\Rn} |D \varphi|^p dx\ .
\end{equation}

The constant $\left(\frac{p}{n-p}\right)^p$ in the right-hand side
of \eqref{h} is sharp. If one is not interested in the best
constant, then \eqref{hs}, and hence \eqref{h}, follows immediately
by combining the generalized H\"older's inequality for weak $L^p$
spaces in \cite{Hu} with the Sobolev embedding \eqref{sob}, after
having observed that $|\cdot|^{-s}\in L^{\frac{n}{s},\infty}(\Rn)$
(the weak $L^{\frac{n}{s}}$ space).

Inequalities of Hardy-Sobolev type play a fundamental role in
analysis, geometry and mathematical physics, and there exists a vast
literature concerning them. Recently, there has been a growing
interest in such inequalities in connection with the study of linear
and nonlinear partial differential equations of subelliptic type and
related problems in CR and sub-Riemannian geometry. In this context
it is also of interest to study the situation in which the whole
space is replaced by a bounded domain $\Om$ and instead of a one
point singularity such as in \eqref{hs}, \eqref{h}, one has the
distance from a lower dimensional set. We will be particularly
interested in the case in which such set is the boundary $\p \Om$ of
the ground domain.

In this paper we consider various types of Hardy-Sobolev
inequalities on a Carnot-Carath\'eodory space $(\Om, d)$ associated
to a system of smooth vector fields $X=\{X_1, X_2,\dots,X_m\}$ on
$\RR^n$ satisfying the H\"ormander's finite rank condition
\cite{Hor}
\begin{equation}\label{frc}
rank\ Lie[X_1,...,X_m]\ \equiv\ n .
\end{equation}

Here $\Om$ is a connected, (Euclidean) bounded  open set in $\RR^n$,
and $d$ is the  Carnot-Carath\'eodory (CC hereafter) metric
generated by $X$. For instance, a situation of special geometric
interest is that when the ambient manifold is a nilpotent Lie group
whose Lie algebra admits a stratification of finite step $r\geq 1$,
see \cite{FS}, \cite{F2} and \cite{St2}. These groups are called
Carnot groups of step $r$. When $r>1$ such groups are non-Abelian,
whereas when $r=1$ one essentially has Euclidean $\Rn$ with its
standard translations and dilations.

For a function $\varphi\in C^1(\Om)$ we indicate with $X\varphi =
(X_1\varphi,...,X_m\varphi)$ its ``gradient" with respect to the
system $X$. One of our main concerns is the trace inequality
\begin{equation}\label{wei}
\int_{\Om}|\varphi(x)|^{p}V(x)dx\leq C\int_{\Om}|X\varphi|^{p}dx,\qquad \varphi\in C^{\infty}_{0}(\Om),
\end{equation}
where $V$ is a general weight, i.e., a nonnegative locally
integrable function on $\Om$, and $1<p<+\infty$. This includes Hardy
inequalities  of the form
\begin{equation}\label{har}
\int_{\Om} \frac{|\varphi(x)|^{p}}{\delta(x)^{p}} dx\leq
C\int_{\Om}|X\varphi|^{p}dx\ ,
\end{equation}
and
\begin{equation}\label{har1}
\int_{\Om}\frac{|\varphi(x)|^{p}}{d(x, x_0)^{p}} dx\leq
C\int_{\Om}|X\varphi|^{p}dx\ ,
\end{equation}
as well as the mixed form
\begin{equation}\label{har2}
\int_{\Om}\frac{|\varphi(x)|^{p}}{\delta(x)^{p-\gamma}
d(x,x_0)^{\gamma}} dx\leq C\int_{\Om}|X\varphi|^{p}dx\ .
\end{equation}

In \eqref{har} we have denoted by $\delta(x)=\inf\{d(x,y):
y\in\partial\Om\}$ the CC distance of $x$ from the boundary of
$\Om$, in \eqref{har1} we have let $x_0$ denote a fixed point in
$\Om$, whereas in \eqref{har2} we have let $0\leq\gamma\leq p$.

Our approach to the inequalities \eqref{har}-\eqref{har2} is based
on results on subelliptic capacitary and Fefferman-Phong type
inequalities in \cite{D2}, Whitney decompositions, and the so-called
pointwise Hardy inequality
\begin{equation}\label{pwh2}
|\varphi(x)|\leq C
\delta(x)\Big(\sup_{0<r\leq4\delta(x)}\frac{1}{|B(x,r)|}\int_{B(x,r)}|
X\varphi|^q dy\Big)^{\frac{1}{q}}\ ,
\end{equation}
where $1<q<p$. In \eqref{pwh2}, $B(x,r)$ denotes the CC ball
centered at $x$ of radius $r$.

We use the ideas  in \cite{Ha} and \cite{Lehr} to show that
\eqref{pwh2} is essentially equivalent to several conditions on the
geometry of the boundary of $\Om$, one of which is the uniform
$(X,p)$-fatness of $\RR^n\setminus\Om$, a generalization of that of
uniform $p$-fatness introduced in \cite{Le} in the Euclidean setting
(see Definition \ref{fatset} below). Inequality \eqref{pwh2} is also
equivalent to other thickness conditions of $\RR^n\setminus\Om$
measured in terms of a certain Hausdorff content which is introduced
in Definition \ref{sec}. For the precise statement of these results
we refer to Theorem \ref{summa}.

We stress here that the class of uniformly $(X,p)$-fat domains is
quite rich. For instance, when $\mathbf G$ is a Carnot group of step
$r=2$, then every (Euclidean) $C^{1,1}$ domain is uniformly
$(X,p)$-fat for every $p>1$, see \cite{CG} and \cite{MM}. On the
other hand, one would think that the Carnot-Carath\'eodory balls
should share this property, but it was shown in \cite{CG} that this
is not the case, since even in the simplest setting of the
Heisenberg group these sets fail to be regular for the Dirichlet
problem for the relevant sub-Laplacian.

We now discuss our results concerning the trace inequality
\eqref{wei}. In the Euclidean setting, a necessary and sufficient
condition on $V$ was found by Maz'ya in 1962 \cite{Ma1}; see also
\cite{Ma2}, Theorem 2.5.2. That is, inequality \eqref{wei} with the
standard Euclidean metric induced by $X=\{\frac{\p}{\p x_1},\dots,
\frac{\p }{\p x_n}\}$ holds if and only if
\begin{equation}\label{MA}
\sup_{\substack{K\subset\Om \\K {\rm ~compact}}}\frac{\int_{K}V(x) dx}{{\rm cap}_{p}(K, \Om)}<+\infty,
\end{equation}
where ${\rm cap}_{p}(K, \Om)$ is the $(X,p)$-capacity $K$ defined by
$${\rm cap}_{p}(K,\Om)=\inf\left\{\int_{\Om}|Xu|^{p}dx: u\in C_{0}^{\infty}(\Om), u\geq 1 {\rm ~on~} K \right\}.$$

Maz'ya's result was  generalized to the subelliptic setting by the
first named author in \cite{D2}. However, although Corollary 5.9 in
\cite{D2} implies that $V\in L^{\frac{Q}{p}, \infty}(\Om)$ is
sufficient for \eqref{wei}, which is the case of an isolated singularity as in \eqref{har1},
 the Hardy inequality \eqref{har} could
not be deduced directly from it since $\delta(\cdot)^{-p}\not\in L^{\frac{Q}{p}, \infty}(\Om)$.
 Here $1<p<Q$, where $Q$ is the local homogeneous dimension of
$\Om$ (see section \ref{Pre}).  On the other hand, in the Euclidean
setting the Hardy inequality \eqref{har} was established in
\cite{An}, \cite{Le} and \cite{W} (see also \cite{Mik} and
\cite{BMS} for other settings)
 under the assumption that  $\RR^n\setminus\Om$ is uniformly $p$-fat.

In this paper we combine  a ``localized" version of \eqref{MA} and
the uniform $(X,p)$-fatness of $\RR^n\setminus\Om$ to allow the
treatment of weights $V$ with singularities which are distributed
both inside and on the boundary of $\Om$. More specifically, we show
that if $\RR^n\setminus\Om$ is uniformly $(X, p)$-fat then
inequality \eqref{wei} holds if and only if
\begin{equation*}
\sup_{B\in\mathcal{W}}\sup_{\substack{ K \subset 2B\\ K\, {\rm compact}}}
\frac{\int_{K}V(x)dx}{{\rm cap}_{p}(K,\Om)}<+\infty,
\end{equation*}
where $\mathcal{W}=\{B_j\}$ is a Whitney decomposition of $\Om$ as
in Lemma \ref{Whitney} below (see Theorem \ref{GP}). In the
Euclidean setting this idea was introduced in \cite{HMV}. Moreover,
a localized version of Fefferman-Phong condition
\begin{equation*}
\sup_{B\in\mathcal{W}}\sup_{\substack{x\in 2B\\0<r<{\rm diam}(B)}}
\int_{B(x,r)}V(y)^{s}dy\ \leq\ C\ \frac{|B(x,r)|}{r^{sp}}
\end{equation*}
for some $s>1$,  is also shown to be sufficient for \eqref{wei} (see Theorem \ref{FPT}).

With these general results in hands, in Corollaries \ref{LQ} and
\ref{LQ1} we deduce the Hardy type inequalities \eqref{har},
\eqref{har1}, and \eqref{har2} for domains $\Om$ whose complements
are uniformly $(X,p)$-fat. Note that in \eqref{har1} and
\eqref{har2} one has to restrict the range of $p$ to $1<p<Q(x_0)$,
where $Q(x_0)$ is the homogeneous dimension at $x_0$ with respect to
the system $X$ (see section \ref{Pre}). It is worth mentioning that
in the Euclidean setting inequalities of the form \eqref{har2} were
obtained in \cite{DPT} but   only for more regular domains, say,
$C^{1,\alpha}$ domains or domains  that satisfy a uniform exterior
sphere condition. In closing we mention that our results are of a
purely metrical character and that, similarly to \cite{D2}, they can
be easily generalized to the case in which the vector fields are
merely Lipschitz continuous and they satisfy the conditions in
\cite{GN1}.

\section{Preliminaries}\label{Pre}

Let $X=\{X_{1}, \dots,X_{m}\}$ be a system of $C^\infty$ vector
fields in $\RR^n$, $n\geq 3$, satisfying H\"ormander's finite rank
condition \eqref{frc}. For any two points $x,y\in\RR^n$, a piecewise
$C^1$ curve $\gamma(t):[0,T]\rightarrow\RR^n$ is said to be
sub-unitary, with respect to the system of vector fields $X$, if for
every $\xi\in\RR^n$ and $t\in (0,T)$ for which $\gamma'(t)$ exists
one has
$$(\gamma'(t)\cdot\xi)^{2}\leq\sum_{i=1}^{m}(X_{i}(\gamma(t))\cdot\xi)^{2}.$$

We note explicitly that the above inequality forces $\gamma '(t)$ to
belong to the span of $\{X_1(\gamma (t)),...,$ $ X_m(\gamma (t))\}$.
The sub-unit length of $\gamma$ is by definition $l_s(\gamma)=T$.
Given $x, y\in \Rn$, denote by $\mathcal S_\Om(x,y)$ the collection
of all sub-unitary $\gamma:[0,T]\to \Om$ which join $x$ to $y$. The
accessibility theorem of Chow and Rashevsky, see \cite{Ra} and
\cite{Ch}, states that, given a connected open set $\Om\subset \Rn$,
for every $x,y\in \Om$ there exists $\gamma \in \mathcal
S_\Om(x,y)$. As a consequence, if we pose
\[
d_{\Om}(x,y)\ =\ \text{inf}\ \{l_s(\gamma)\mid \gamma \in \mathcal
S_\Om(x,y)\} ,
\]
we obtain a distance on $\Om$, called the
\emph{Carnot-Carath\'eodory \emph{(CC)} distance on $\Om$},
associated with the system $X$. When $\Om = \Rn$, we write $d(x,y)$
instead of $d_{\Rn}(x,y)$. It is clear that $d(x,y) \leq
d_\Om(x,y)$, $x, y\in \Om$, for every connected open set $\Om
\subset \Rn$. In \cite{NSW} it was proved that for every connected
$\Om \subset \subset \Rn$ there exist $C, \epsilon >0$ such that
\begin{equation}\label{CCeucl}
C\ |x - y|\ \leq d_\Om(x,y)\ \leq C^{-1}\ |x - y|^\epsilon ,
\quad\quad\quad x, y \in \Om .
\end{equation}

This gives $d(x,y)\ \leq C^{-1} |x - y|^\epsilon$, $x, y\in \Om$,
and therefore
\[
i: (\Rn, |\cdot|)\to (\Rn, d) \quad\quad\quad is\,\ continuous .
\]

It is easy to see that also the continuity of the opposite inclusion
holds \cite{GN1}, hence the metric and the Euclidean topology are
compatible. In particular, the compact sets with respect to either
topology are the same.

For $x\in \Rn$ and $r>0$, we let $B(x,r) = \{y\in \Rn\mid d(x,y) < r
\}$. The basic properties of these balls were established by Nagel,
Stein and Wainger in their seminal paper \cite{NSW}. Denote by
$Y_1,...,Y_l$ the collection of the $X_j$'s and of those commutators
which are needed to generate $\Rn$. A formal ``degree" is assigned
to each $Y_i$, namely the corresponding order of the commutator. If
$I = (i_1,...,i_n), 1\leq i_j\leq l$ is a $n$-tuple of integers,
following \cite{NSW} we let $d(I) = \sum_{j=1}^n deg(Y_{i_j})$, and
$a_I(x) = \text{det}\ (Y_{i_1},...,Y_{i_n})$. The
\emph{Nagel-Stein-Wainger polynomial} is defined by
\begin{equation}\label{la}
\Lambda(x,r)\ =\ \sum_I\ |a_I(x)|\ r^{d(I)}, \quad\quad\quad\quad r
> 0.
\end{equation}

For a given compact set $K\subset \RR^n$, we denote by
\begin{equation}\label{Q}
Q = \text{sup}\{d(I): |a_I(x)| \ne 0,\, x\in K\}
\end{equation}
the {\it local homogeneous dimension} of $K$ with respect to the system $X$, and by
\begin{equation}\label{Qx}
Q(x) = \text{inf} \{d(I): |a_I(x)| \ne 0\}
\end{equation}
the {\it homogeneous dimension} at $x$ with respect to $X$.
Obviously, $3 \leq n \leq Q(x) \leq Q$. It is immediate that for
every $x\in K$, and every $r>0$, one has
\begin{equation}\label{lambdarescale}
t^Q \Lambda(x,r) \leq \Lambda(x,tr) \leq  t^{Q(x)}
\Lambda(x,r)
\end{equation}
for any $0\leq t\leq 1$, and thus
\begin{equation}\label{nswhom}
Q(x) \leq \frac{r \Lambda'(x,r)}{\Lambda(x,r)} \leq Q\ .
\end{equation}

For a simple example consider in $\RR^3$ the system \[ X\ =\
\{X_1,X_2,X_3\}\ =\ \left\{\frac{\partial}{\partial
x_1},\frac{\partial}{\partial x_2},x_1\frac{\partial}{\partial
x_3}\right\}\ . \] It is easy to see that $l=4$ and \[
\{Y_1,Y_2,Y_3,Y_4\}\ =\ \{X_1,X_2,X_3,[X_1,X_3]\}\ . \] Moreover,
$Q(x) = 3$ for all $x\not= 0$, whereas for any compact set $K$
containing the origin $Q(0)=Q = 4$.

The following fundamental result is due to Nagel, Stein and Wainger
\cite{NSW}: \emph{For every compact set $K\subset\RR^n$ there exist
constants $C, R_{0}>0$ such that, for any $x\in K$, and $0 < r \leq
R_{0}$, one has
\begin{equation}\label{nsw2}
C \Lambda(x,r) \leq |B(x,r)| \leq C^{-1} \Lambda(x,r)\ .
\end{equation}}

As a consequence, there exists $C_0$ such that  for any $x\in K$,
$0< r <s\leq R_{0}$,  we have
\begin{equation}\label{doubling}
  C_{0}\left(\frac{r}{s}\right)^{Q} \leq \frac{|B(x,r)|}{ |B(x,s)|}\ .
\end{equation}

 Henceforth, the numbers $C_0$ and $R_0$ above will be referred to as the {\it local parameters }
of $K$ with respect to the system $X$. If $E$ is any (Euclidean)
bounded set in $\RR^n$ then the local parameters of $E$ are defined
as those of $\overline{E}$. We mention explicitly that the number
$R_0$ is always chosen in such a way that the closed metric balls
$\overline B(x,R)$, with $x\in K$ and $0<R\leq R_0$, are compact,
see \cite{GN1} and \cite{GN2}. This choice is motivated by the fact
that in a CC space the closed metric balls of large radii are not
necessarily compact. For instance, if one considers the H\"ormander
vector field on $\mathbb R$ given by $X_1 = (1+x^2)\frac{d}{dx}$,
then for any $R\geq\pi/2$ one has $B(0,R) = \mathbb R$, see
\cite{GN1}.

Given an open set $\Om\subset \Rn$, and $1\leq p\leq \infty$, we
denote by $S^{1,p}(\Om)$, the subelliptic Sobolev space associated
with the system $X$ is defined by
$$S^{1,p}(\Om)=\{u\in L^{p}(\Om): X_{i}u\in L^{p}(\Om), i=1,\dots,m\},$$
where $X_{i}u$ is understood in the distributional sense, i.e.,
$$ <X_{i}u,\varphi>=\int_{\Om}u X^*_{i}\varphi dx$$
for every $\varphi\in C^{\infty}_{0}(\Om)$. Here $X_i^{*}$ denotes the formal adjoint of $X_i$. Endowed with the norm
\begin{equation}\label{sobnorm}
\norm{u}_{S^{1,p}(\Om)}=\left(\int_{\Om}(|u|^{p}+|Xu|^{p})dx\right)^{\frac{1}{p}},
\end{equation}
$S^{1,p}(\Om)$ is a Banach space which admits $C^{\infty}(\Om)\cap
S^{1,p}(\Om)$ as a dense subset, see \cite{GN1} and \cite{FSS}. The
local version of $S^{1,p}(\Om)$ will be denoted by $S^{1,p}_{\rm
loc}(\Om)$, whereas the completion of $C^{\infty}_{0}(\Om)$ under
the norm in \eqref{sobnorm} is denoted by $S^{1,p}_{0}(\Om)$.

A fundamental result in \cite{RS} shows that, for any bounded open
set $\Om \subset \Rn$ the space $S^{1,p}_0(\Om)$ embeds into a
standard fractional Sobolev space $W^{s,p}_0(\Om)$, where $s = 1/r$
and $r$ is the largest number of commutators which are needed to
generate the Lie algebra over $\overline \Om$. Since on the other
hand we have classically $W^{s,p}_0(\Om)\subset L^p(\Om)$, we obtain
the following Poincar\'e inequality
\begin{equation}\label{poi}
\int_\Om |\varphi|^p\ dx\ \leq\ C(\Om)\ \int_\Om |X\varphi|^p\ dx\
,\ \ \ \varphi \in S^{1,p}_0(\Om)\ .
\end{equation}

Another fundamental result which plays a pervasive role in this
paper is the following global Poincar\'e inequality on metric balls
due to D. Jerison \cite{J}. Henceforth, given a measurable set
$E\subset \Rn$, the notation $\varphi_E$ indicates the average of
$\varphi$ over $E$ with respect to Lebesgue measure.

\begin{theorem}\label{T:jer}
Let $K\subset \Rn$ be a compact set with local parameters $C_0$ and
$R_0$. For any $1\leq p <\infty$ there exists $C =C(C_0,p)>0$ such
that for any $x\in K$ and every $0<r\leq R_0$, one has for all
$\varphi\in S^{1,p}(B(x,r))$
\begin{equation}\label{poincare}
\int_{B(x,r)}|\varphi-\varphi_{B(x,r)}|^{p}dy\ \leq\ C\ r^p\
\int_{B(x,r)}|X\varphi|^{p}dy\ .
\end{equation}
\end{theorem}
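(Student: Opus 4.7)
The plan is to prove the Poincaré inequality via a pointwise sub-representation of $\varphi - \varphi_B$ by a fractional-integral type operator applied to $|X\varphi|$, and then to use Hölder together with the doubling property \eqref{doubling} to pass from the $L^1$ to the $L^p$ estimate. This reproduces a metric-space approach to Jerison's theorem and sidesteps the Rothschild-Stein lifting used in \cite{J}, although that lifting to a free nilpotent group remains a natural alternative route.

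First I would establish a sub-representation formula: for $y \in B = B(x,r)$,
$$|\varphi(y) - \varphi_B| \leq C \int_{B^*} \frac{d(y,z)\, |X\varphi(z)|}{|B(y, d(y,z))|}\, dz,$$
where $B^* = B(x,\lambda r)$ for a suitable $\lambda \geq 1$. The input is that any two points $y, z \in B$ can be joined by a subunit curve $\gamma_{y,z}$ of length at most $C\, d(y,z)$ which remains inside $B^*$. Writing $\varphi(y) - \varphi_B = |B|^{-1}\int_B (\varphi(y)-\varphi(z))\, dz$, bounding the inner difference by $\int_0^{l_s(\gamma_{y,z})} |X\varphi(\gamma_{y,z}(s))|\, ds$ (which follows because the tangent of a subunit curve lies in the horizontal span with coefficients of norm at most $1$), and parametrizing the family $\{\gamma_{y,z}\}_{z\in B}$ via Nagel-Stein-Wainger ball-box coordinates around $y$, yields the displayed inequality once the Jacobian of the change of variables is controlled using \eqref{nsw2}.

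To upgrade to $L^p$, set $d\mu_y(z) = (d(y,z)/|B(y,d(y,z))|)\, dz$ on $B^*$. A layer-cake decomposition over dyadic annuli $\{2^{-k-1}r < d(y,z)\leq 2^{-k}r\}$, combined with \eqref{doubling}, shows $\mu_y(B^*) \leq Cr$ uniformly in $y$, with $C = C(C_0)$. Hölder's inequality applied to the sub-representation formula with respect to $d\mu_y$ then gives $|\varphi(y) - \varphi_B|^p \leq (Cr)^{p-1} \int_{B^*} |X\varphi(z)|^p\, d\mu_y(z)$. Integrating in $y \in B$, applying Fubini, using the quasi-symmetry $|B(y,d(y,z))| \asymp |B(z,d(y,z))|$ (again a consequence of \eqref{doubling}), and bounding the inner $y$-integral by the same annular argument, produces the claim with constant depending only on $C_0$ and $p$. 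The principal obstacle is the construction of the connecting subunit curves with the quantitative length bound and the regularity required for the change of variables; this rests on the ball-box theorem of Nagel-Stein-Wainger (the same result that underlies \eqref{nsw2}), coupled with a composition of flows of the $X_i$ to access arbitrary points of $B(y,C\, d(y,z))$ while keeping everything inside a fixed enlargement of $B$. This is precisely why the constants depend on the local parameters $C_0, R_0$ of $K$ and why the hypothesis $r \leq R_0$ cannot be removed.
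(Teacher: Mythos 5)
The theorem you are proving is stated in the paper without proof; it is cited as Jerison's global Poincar\'e inequality \cite{J}, so there is no in-paper argument to compare against. Your route---sub-representation formula plus H\"older plus doubling---is the standard metric-space alternative to Jerison's Rothschild--Stein lifting, and the first two stages of your sketch are sound in outline: the pointwise bound $|\varphi(y)-\varphi_B|\le C\int_{B^*} d(y,z)|B(y,d(y,z))|^{-1}|X\varphi(z)|\,dz$ with $B^*=B(x,\lambda r)$, $\lambda>1$, is exactly Theorem~1.1 of \cite{CDG} (which the paper itself invokes in the proof of Theorem~\ref{PH}), and your dyadic-annulus computation showing $\mu_y(B^*)\le Cr$, followed by H\"older with respect to $d\mu_y$ and Fubini with the quasi-symmetry $|B(y,d(y,z))|\asymp|B(z,d(y,z))|$, is correct.

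However, there is a genuine gap at the very end. What your argument yields, carried out honestly, is
\begin{equation*}
\int_{B(x,r)}|\varphi-\varphi_{B(x,r)}|^{p}\,dy\ \leq\ C\,r^{p}\int_{B(x,\lambda r)}|X\varphi|^{p}\,dy ,
\end{equation*}
i.e.\ the \emph{weak} $(p,p)$-Poincar\'e inequality, with a strictly larger ball on the right-hand side. The connecting sub-unit curves $\gamma_{y,z}$ cannot in general be kept inside $B(x,r)$ (already the CC-geodesic between two points of $B(x,r)$ may only be confined to $B(x,3r)$), so $\lambda>1$ is unavoidable in the sub-representation formula. The theorem as stated has $B(x,r)$ on both sides, and moreover takes $\varphi\in S^{1,p}(B(x,r))$ only, so $X\varphi$ is not even defined on $B^*\setminus B$; your chain of estimates therefore cannot even be written down under the stated hypothesis, let alone close. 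Passing from the weak to the strong (same-ball) inequality is precisely the nontrivial content of Jerison's theorem and requires a separate Whitney-covering / Boman chain argument: decompose $B(x,r)$ into a Whitney family of sub-balls each comparable to its distance to $\partial B(x,r)$, apply the weak Poincar\'e inequality on each (now the enlarged sub-ball stays inside $B(x,r)$), and telescope averages along chains of sub-balls connecting an arbitrary Whitney ball to a fixed central one, controlling the overlaps by doubling. Without this final step your proposal proves a weaker statement than the one in the paper.
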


We will also need the following basic result on the existence of
cut-off functions in metric balls, see \cite{GN2} and also
\cite{FSS}. Given a set $\Om \subset \Rn$ we will indicate with
$C^{0,1}_d(\Om)$ the collection of functions $\varphi\in C(\Om)$ for
which there exists $L\geq 0$ such that
\[
|\varphi(x) - \varphi(y)| \ \leq\ L\ d(x,y)\ ,\ \ \ x, y\in \Om\ .
\]

We recall that, thanks to the Rademacher-Stepanov type theorem
proved in \cite{GN2}, \cite{FSS}, if $\Om$ is metrically bounded
then any function in $C^{0,1}_d(\Om)$ belongs to the space
$S^{1,\infty}(\Om)$. This is true, in particular, when $\Om$ is a
metric ball.

\begin{theorem}\label{T:cutoff}
Let $K\subset \Rn$ be a compact set with local parameters $C_0$ and
$R_0$. For every $0<s<t<R_0$ there exists $\varphi\in
C^{0,1}_d(\Rn)$, $0\leq \varphi\leq 1$, such that
\begin{itemize}
\item[(i)] $\varphi\equiv 1$ on $B(x,s)$ and $\varphi\equiv 0$ outside $B(x,t)$,
\item[(ii)] $|X\varphi|\leq \frac{C}{t-s}$ for a.e. $x\in\Rn$\ ,
\end{itemize}
for some $C>0$ depending on $C_0$. Furthermore, we have $\varphi\in
S^{1,p}(\Rn)$ for every $1\leq p<\infty$.
\end{theorem}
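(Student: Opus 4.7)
The plan is to take $\varphi$ to be a suitable function of the CC distance from $x$, exploiting the fact that $d(x,\cdot)$ is $1$-Lipschitz with respect to $d$. First I would pick a smooth (or piecewise linear) auxiliary cutoff $\psi:\mathbb{R}\to[0,1]$ with $\psi\equiv 1$ on $(-\infty,s]$, $\psi\equiv 0$ on $[t,\infty)$, and $\|\psi'\|_\infty\leq 2/(t-s)$, and define
\[
\varphi(y)\ :=\ \psi(d(x,y)), \qquad y\in\mathbb{R}^n.
\]
Condition (i) is then immediate from the definitions: for $y\in B(x,s)$ one has $d(x,y)<s$ so $\varphi(y)=1$, while for $y\notin B(x,t)$ one has $d(x,y)\geq t$ so $\varphi(y)=0$.

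For (ii), I would first observe that by the triangle inequality for the CC distance,
\[
|d(x,y)-d(x,z)|\ \leq\ d(y,z),\qquad y,z\in\mathbb{R}^n,
\]
so $d(x,\cdot)$ is $1$-Lipschitz in the metric $d$. Composing with $\psi$ gives
\[
|\varphi(y)-\varphi(z)|\ \leq\ \|\psi'\|_\infty\,|d(x,y)-d(x,z)|\ \leq\ \frac{2}{t-s}\,d(y,z),
\]
so $\varphi\in C^{0,1}_d(\mathbb{R}^n)$ with Lipschitz constant at most $2/(t-s)$. At this point I would invoke the Rademacher--Stepanov type theorem of \cite{GN2}, \cite{FSS} (recalled in the paragraph immediately preceding the statement): since $B(x,R_0)$ is metrically bounded, the restriction of $\varphi$ to this ball lies in $S^{1,\infty}(B(x,R_0))$, and the horizontal gradient is controlled a.e. by the CC Lipschitz constant, giving
\[
|X\varphi(y)|\ \leq\ \frac{C}{t-s}\qquad\text{for a.e. } y\in B(x,R_0),
\]
with $C$ depending only on $C_0$ (through the constant appearing in the Rademacher--Stepanov estimate). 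Since $\varphi\equiv 0$ on $\mathbb{R}^n\setminus B(x,t)$ and $B(x,t)\subset B(x,R_0)$, the identity $X\varphi=0$ holds a.e. on the complement as well, so the bound (ii) holds a.e. in $\mathbb{R}^n$.

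The last assertion, $\varphi\in S^{1,p}(\mathbb{R}^n)$ for every $1\leq p<\infty$, then follows directly from the fact that $\varphi$ is bounded with support contained in the (Euclidean) compact set $\overline{B(x,t)}$, together with the uniform essential bound on $|X\varphi|$ just established. The only conceptually nontrivial input is the Rademacher--Stepanov type theorem for CC metrics, which is precisely why the authors recall it in the preceding paragraph; once it is available, the proof is essentially a one-line composition-with-distance-function argument, and I do not anticipate any serious obstacle.
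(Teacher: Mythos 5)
The paper does not actually prove this theorem; it is stated as a recalled result with references to \cite{GN2} and \cite{FSS}, so there is no in-paper proof to compare against. Your argument---composing a one-dimensional Lipschitz cutoff with $d(x,\cdot)$, observing that $d(x,\cdot)$ is $1$-Lipschitz in $d$ by the triangle inequality, and then invoking the Rademacher--Stepanov type theorem to pass from the CC Lipschitz bound to the a.e.\ bound on $|X\varphi|$---is precisely the standard construction in those references, and it is correct. One minor remark: the Rademacher--Stepanov theorem in the CC setting actually yields $|X\varphi|\leq L$ a.e.\ for an $L$-Lipschitz function, so your construction gives a \emph{universal} constant $C$ (e.g.\ $C=2$, or even $C=1$ if $\psi$ is taken piecewise linear) rather than one depending on $C_0$; this is of course consistent with, and slightly stronger than, the statement as written. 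Your handling of the passage from the bounded ball $B(x,R_0)$ to all of $\Rn$ (using that $\varphi$ vanishes outside $B(x,t)$) and the final $S^{1,p}(\Rn)$ membership are both fine.
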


A condenser is a couple $(K,\Om)$, where $\Om$ is open and $K\subset
\Om$ is compact. The subelliptic $p$-capacity of $(K,\Om)$ is
defined by
$${\rm cap}_{p}(K,\Om)=\inf\left\{\int_{\Om}|X\varphi|^{p}dx: \varphi\in
C_{d}^{0,1}(\Rn), supp\ \varphi \subset \Om, \varphi\geq 1 {\rm ~on~} K \right\}\ .$$

As usual, it can be extended to arbitrary sets $E\subset\Om$ by
letting
$${\rm cap}_{p}(E,\Om)=\inf_{\substack{G\subset\Om {\rm ~open}\\E\subset G}}
\sup_{\substack{K\subset G\\K {\rm ~compact}}} {\rm cap}_{p}(K,
\Om)\ .$$

It was proven in \cite{D1} that  the subelliptic $p$-capacity of a
metric ``annular" condenser has the following two-sided estimate
which will be used extensively in the paper. Given a compact set
$K\subset \Rn$ with local parameters $C_0$ and $R_0$, and
homogeneous dimension $Q$, for any $1<p<\infty$ there exist $C_1,
C_2>0$, depending only on $C_0$ and $p$, such that
\begin{equation}\label{anular}
C_{1} \frac{|B(x,r)|}{r^{p}}\ \leq\ {\rm cap}_{p}(B(x,r), B(x,2r))\
\leq\ C_{2}\ \frac{|B(x,r)|}{r^{p}}\ ,
\end{equation}
for all $x\in K$, and $0<r\leq R_{0}/2$.

The subelliptic $p$-Laplacian associated to the system $X$ is the
quasilinear operator defined by
$$\mathcal{L}_{p}[u]=-\sum_{i=1}^{m}X_{i}^{*}(|Xu|^{p-2}X_{i}u)\ .$$
A weak solution
$u\in S^{1,p}_{{\rm loc}}(\Om)$  to the equation
$\mathcal{L}_p[u]=0$ is said to be $\mathcal{L}_{p}$-$harmonic$ in
$\Om$. It is well-known that every $\mathcal{L}_{p}$-harmonic
function in $\Om$ has a H\"older continuous representative, see
\cite{CDG1}. This means that, if $C_0$ and $R_0$ are the local
parameters of $\Om$, then there exist $0<\alpha<1$, and $C>0$,
depending on $C_0$ and $p$, such that for every $0<R\leq R_0$ for
which $B_{4R}(x_0)\subset \Om$ one has
\begin{equation}\label{HC}
|u(x) - u(y)|\ \leq\ C \left(\frac{d(x,y)}{R}\right)^{\alpha}
\left(\frac{1}{|B_{2R}(x_0)|} \int_{B_{2R}(x_0)} |u|^p
dx\right)^{1/p}\ .
\end{equation}

Given a bounded open set $\Om \subset \Rn$ and $1<p<\infty$, the
Dirichlet problem for $\Om$ and $\mathcal L_p$ consists in finding,
for every given $\varphi\in S^{1,p}(\Om)\cap C(\overline \Om)$, a
function $u\in S^{1,p}(\Om)$ such that
\begin{equation}\label{DP}
\mathcal L_p[u]\ =\ 0\ \ \  \text{in}\ \Om\ ,\ \ \ \ \  u - \varphi
\in S^{1,p}_0(\Om)\ .
\end{equation}

Such problem admits a unique solution, see \cite{D1}. A point
$x_0\in \p \Om$ is called regular if for every $\varphi\in
S^{1,p}(\Om)\cap C(\overline \Om)$, one has $\underset{x\to
x_0}{\lim} u(x) = \varphi(x_0)$. If every $x_0\in \p \Om$ is
regular, then we say that $\Om$ is regular. We will need the
following basic Wiener type estimate proved in \cite{D1}.

\begin{theorem}\label{T:Wiener}
Given a bounded open set $\Om\subset \Rn$ with local parameters
$C_0$ and $R_0$, let $\varphi\in S^{1,p}(\Om)\cap C(\overline \Om)$.
Consider the (unique) solution $u$ to the Dirichlet problem
\eqref{DP}. There exists $C = C(p,C_0)>0$ such that given $x_0\in
\partial \Om$, and $0<r<R\leq R_0/3$, one has with $\Om^c = \Rn
\setminus \Om$
\begin{align*}
& osc \{u, \Om\cap B(x_0,r)\}\ \leq\ osc \{\varphi, \partial \Om
\cap \overline B(x_0,2R)\}
\\
& +\ osc (\varphi, \partial \Om)\ \exp\ \left\{-\ C\
\int_r^R\left[\frac{{\rm cap}_p\ (\Om^c\cap \overline B(x_0,t),
B(x_0,2t))}{{\rm cap}_p\ (\overline B(x_0,t), B(x_0,2t))}\right]\
\frac{dt}{t}\right\} .
\end{align*}
\end{theorem}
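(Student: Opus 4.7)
The plan is to combine the comparison principle for $\mathcal{L}_p$-harmonic functions with a one-step decay lemma of Maz'ya type, iterate it over a dyadic sequence of scales, and convert the resulting product into the exponential appearing in the statement.

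First, set $M_+ := \sup\{\varphi(y): y \in \partial\Om \cap \overline B(x_0, 2R)\}$, $M_- := \inf\{\varphi(y): y \in \partial\Om \cap \overline B(x_0, 2R)\}$, and $\Theta := osc(\varphi, \partial\Om)$. By the weak maximum principle for the Dirichlet problem \eqref{DP} (available in the subelliptic setting through \cite{D1,CDG1}), it suffices to control separately $\sup_{\Om \cap B(x_0, r)} (u - M_+)_+$ and $\sup_{\Om \cap B(x_0, r)} (M_- - u)_+$, each a priori bounded by $\Theta$. By symmetry I focus on the former and introduce $h(t) := \sup_{\Om \cap B(x_0, 2t)} (u - M_+)_+$ for $r \leq t \leq R$.

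Second, the technical heart of the argument is the one-step decay lemma: there exists $c = c(p, C_0) > 0$ such that
\begin{equation*}
h(t/2) \leq \bigl(1 - c\, \gamma(t)\bigr)\, h(t), \qquad \gamma(t) := \frac{{\rm cap}_p(\Om^c \cap \overline B(x_0, t), B(x_0, 2t))}{{\rm cap}_p(\overline B(x_0, t), B(x_0, 2t))}.
\end{equation*}
To prove this, I would study $v := h(t) - (u - M_+)_+$ extended by $h(t)$ on $B(x_0, 2t) \setminus \Om$. Since $(u - M_+)_+$ is an $\mathcal{L}_p$-subsolution in $\Om$, $v$ is a nonnegative $\mathcal{L}_p$-supersolution on $B(x_0, 2t)$ in a truncated weak sense, equal to $h(t)$ throughout $\Om^c \cap \overline B(x_0, t)$ and bounded by $h(t)$. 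Testing against the cutoff from Theorem \ref{T:cutoff} yields a Caccioppoli estimate; combining with the variational definition of ${\rm cap}_p$, the Poincar\'e inequality (Theorem \ref{T:jer}), and the doubling property \eqref{doubling}, one extracts the quantitative gap $\inf_{\Om \cap B(x_0, t/2)} v \geq c\, \gamma(t)\, h(t)$, which is equivalent to the claimed decay.

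Third, iterate along the dyadic sequence $t_j := 2^{-j}R$ for $j = 0, 1, \ldots, N$ with $2^{-N-1}R \leq r < 2^{-N}R$. Writing $\gamma_j := \gamma(t_j)$ and using $1 - x \leq e^{-x}$,
\begin{equation*}
h(r) \leq \Theta \prod_{j=0}^{N-1}(1 - c\, \gamma_j) \leq \Theta \exp\Bigl(-c \sum_{j=0}^{N-1} \gamma_j\Bigr).
\end{equation*}
A direct comparison of the dyadic sum with the integral---using that both capacities entering $\gamma$ are doubling in $t$ by \eqref{anular} and \eqref{doubling}---replaces $\sum_j \gamma_j$ by $c' \int_r^R \gamma(t)\, \frac{dt}{t}$ up to adjusting the constant. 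Adding back the contribution of the boundary oscillation on $\partial\Om \cap \overline B(x_0, 2R)$ (discarded when passing from $u$ to $u - M_+$) produces the asserted estimate.

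The main obstacle is the decay lemma, and specifically the capacitary lower bound for $v$. In the Euclidean case this is classical (Maz'ya; Kilpel\"ainen--Mal\'y), but in the CC setting one must rely exclusively on the metrical tools of Section \ref{Pre}: the Poincar\'e inequality of Jerison (Theorem \ref{T:jer}), the annular capacity estimate \eqref{anular}, the cutoff Theorem \ref{T:cutoff}, the doubling property \eqref{doubling}, and the H\"older continuity \eqref{HC}. The subtle point is to keep all constants dependent only on $p$ and $C_0$, uniformly for $0 < R \leq R_0/3$, so that the resulting bound is genuinely scale-invariant in the form required.
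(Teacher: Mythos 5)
The paper does not prove Theorem \ref{T:Wiener}; the statement is quoted from \cite{D1}, where the Wiener-type boundary estimate for the subelliptic $p$-Laplacian is established by adapting the Maz'ya and Gariepy--Ziemer strategy to Carnot--Carath\'eodory spaces. Your overall framework---reduce to a one-sided boundary supremum, prove a one-step annular decay governed by the capacity ratio $\gamma(t)$, iterate dyadically, and trade the dyadic sum for the Wiener integral via the doubling of the annular capacities in \eqref{anular} and \eqref{doubling}---is exactly that strategy, so structurally you are on the right track.

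The genuine gap is in the one-step decay lemma, which carries the full weight of the theorem. Your sketch asserts that a Caccioppoli estimate, the capacitary Poincar\'e inequality of Lemma \ref{SobCap}, and doubling ``extract'' the pointwise gap $\inf_{\Om\cap B(x_0,t/2)} v \ge c\,\gamma(t)\,h(t)$, but these tools only control integral averages of $v$ and of $w=h(t)-v$. Converting an integral lower bound into a genuine infimum estimate is exactly the job of the weak Harnack inequality for nonnegative $\mathcal L_p$-supersolutions, available in the CC setting from the Moser/De Giorgi machinery behind \eqref{HC} (see \cite{CDG1}); that ingredient is absent from both your list of tools and your argument, and without it the claimed pointwise gap does not follow. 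It is also worth noting that the more direct alternative---compare $v$ with the $p$-capacitary potential $\hat P$ of $\Om^c\cap\overline B(x_0,t)$ in $B(x_0,2t)$ and invoke the Wolff potential estimate of Theorem \ref{wes}---yields $\inf_{B(x_0,t/2)}\hat P\gtrsim\gamma(t)^{1/(p-1)}$ rather than $\gamma(t)$, so the exponent in your decay lemma is not what the naive comparison delivers and must come from the more careful level-set iteration carried out in \cite{D1}. Finally, the extension of $(u-M_+)_+$ by zero across $\p\Om$ and the assertion that the extended function is a subsolution in all of $B(x_0,2t)$ is itself a point that needs justification: the trace condition $u-\varphi\in S^{1,p}_0(\Om)$ controls the zero extension only up to the \emph{global} oscillation of $\varphi$ on $\p\Om$, which is precisely why the factor $osc(\varphi,\p\Om)$---and not merely $osc(\varphi,\p\Om\cap\overline B(x_0,2R))$---multiplies the exponential in the final estimate; your sketch does not account for this.
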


\begin{remark}\label{R:wiener}
It is clear from Theorem \ref{T:Wiener} that if $\Om$ is
\emph{thin at} $x_0\in \p \Om$, i.e., if one has
\[
\underset{t\to 0^+}{\liminf}\ \frac{{\rm cap}_p\ (\Om^c\cap \overline
B(x_0,t), B(x_0,2t))}{{\rm cap}_p\ (\overline B(x_0,t), B(x_0,2t))}\ >\ 0\
,
\]
then $x_0$ is regular for the Dirichlet problem \eqref{DP}.
\end{remark}

A lower semicontinuous function $u:\Om\ra (-\infty, \infty]$, such
that $u\not\equiv +\infty$, is called
$\mathcal{L}_{p}$-$superharmonic$ in $\Om$ if for all open sets  $D$
such that ${\overline D}\subset\Om$, and all
$\mathcal{L}_{p}$-harmonic functions $h\in C(\overline{D})$, the
inequality $h\leq u$ on $\partial D$ implies $h\leq u$ in $D$.
Similarly to what is done in the classical case in \cite{HKM}, one
can associate with each $\mathcal{L}_{p}$-superharmonic function $u$
in $\Om$ a nonnegative (not necessarily finite) Radon measure
$\mu[u]$, such that $- \mathcal{L}_{p}[u]=\mu[u]$. This means that
$$\int_{\Om}|Xu|^{p-2}Xu \cdot X\varphi\ dx\ =\ \int_{\Om}\varphi\ d\mu[u]$$
for all $\varphi\in C^{\infty}_{0}(\Om)$. Here $Xu$ is defined a.e. by
$$Xu=\lim_{k\rightarrow\infty}X(\min\{u, k\}).$$

It is known that, if either $u\in L^{\infty}(\Om)$ or $u\in
S^{1,r}_{\rm loc}(\Om)$ for some $r\geq 1$, then $Xu$ coincides with
the regular distributional derivatives. In general we have $Xu \in
L^{s}_{\rm loc}(\Om)$ for $0<s<\frac{Q(p-1)}{Q-1}$; see e.g.,
\cite{TW} and \cite{HKM}.

We will need  the following basic pointwise estimates for
$\mathcal{L}_p$-superharmonic functions. This result was first
established by Kilpel\"ainen and Mal\'y \cite{KM} in the elliptic
case, and extended to the setting of CC metrics by Trudinger and
Wang \cite{TW}. For a generalization to more general metric spaces
we refer the reader to \cite{BMS}. We recall that given $1<p<\infty$
the $p$-\emph{Wolff's potential} of a Radon measure $\mu$ on a
metric ball $B(x,R)$ is defined by
\begin{equation}\label{Wpot} {\rm\bf W}_{p}^{R}\mu(x)\ =\
\int_{0}^{R}\left[\frac{\mu(B(x,t))}{t^{-p}|B(x,t)|}\right]^{\frac{1}{p-1}}\frac{dt}{t}
\end{equation}

\begin{theorem}\label{wes}
Let $K\subset\RR^n$ be a compact set with relative local parameters
$C_0$ and $R_0$. If $x\in K$ and $R\leq R_0/2$, let $u \ge 0$ be
$\mathcal{L}_p$-superharmonic in $B(x,2R)$ with associated measure
$\mu= - \mathcal{L}_p[u]$. There exist positive constants $C_{1}$ and
$C_{2}$, depending only on $p$ and $C_0$, such that
$$C_1{\rm\bf W}_{p}^{R}\mu(x)\leq u(x)\leq C_2\left\{{\rm\bf W}_{p}^{2R}\mu(x) +\inf_{B(x,R)}u\right\}\ .$$
\end{theorem}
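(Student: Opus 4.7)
The proof splits into the lower bound $u(x)\geq C_1{\rm\bf W}_p^R\mu(x)$ and the substantially harder upper bound. My plan for both parts is to port the Kilpel\"ainen--Mal\'y strategy into the CC setting, leveraging the three ingredients already assembled in the excerpt: the capacitary estimate \eqref{anular}, the H\"older continuity bound \eqref{HC}, and the horizontal cut-off functions of Theorem \ref{T:cutoff}. The Poincar\'e inequality \eqref{poincare} on metric balls and the unique solvability of the Dirichlet problem \eqref{DP} supply the auxiliary machinery.

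For the lower bound, I would fix $0<t\leq R$ and compare $u$ with the $\mathcal{L}_p$-potential $v_t$ of the restriction $\mu_t$ of $\mu$ to $B(x,t)$, defined as the unique nonnegative $\mathcal{L}_p$-superharmonic function in $B(x,2t)$ with $-\mathcal{L}_p v_t=\mu_t$ and $v_t=0$ on $\partial B(x,2t)$. Testing the weak formulation of $-\mathcal{L}_p v_t=\mu_t$ against a cut-off supplied by Theorem \ref{T:cutoff} and invoking the capacity bound \eqref{anular} should produce the pointwise estimate
\[
v_t(x)\ \geq\ c\left[\frac{\mu(B(x,t))}{t^{-p}|B(x,t)|}\right]^{1/(p-1)}.
\]
By the comparison principle $u\geq v_t$ in $B(x,2t)$. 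Applying this at a dyadic sequence $t_k=2^{-k}R$ and summing the resulting pointwise bounds reconstructs the integral in \eqref{Wpot}, giving $u(x)\geq C_1{\rm\bf W}_p^R\mu(x)$.

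For the upper bound, I would iterate on a geometric sequence of balls $B_k=B(x,r_k)$ with $r_k=\lambda^k R$ for a suitably small $\lambda\in(0,1/2)$. Let $m_k=\inf_{B_k}u$, let $h_k$ be the $\mathcal{L}_p$-harmonic replacement of $u$ in $B_k$ (the solution of \eqref{DP} in $B_k$ with boundary data $u$), and set $w_k=u-h_k\geq 0$. Then $w_k$ is $\mathcal{L}_p$-superharmonic in $B_k$ with associated measure $\mu|_{B_k}$, and vanishes on $\partial B_k$ in the Sobolev sense. The H\"older estimate \eqref{HC} applied to $h_k$ gives $\mathrm{osc}(h_k,B_{k+1})\leq\tfrac{1}{2}\mathrm{osc}(h_k,B_k)$ once $\lambda$ is fixed small enough, while testing the equation for $w_k$ against itself (and employing a Moser-type iteration based on \eqref{poincare}) followed by \eqref{anular} produces
\[
\sup_{B_{k+1}}w_k\ \leq\ C\left[\frac{\mu(B(x,2r_k))}{r_k^{-p}|B(x,r_k)|}\right]^{1/(p-1)}.
\]
Combining the two ingredients yields a one-step recursion of the shape $m_{k+1}\geq m_k-C[\cdots]^{1/(p-1)}$, and telescoping from $k=0$ down to the scale at which $r_k$ leaves the range of \eqref{anular} produces the bound $u(x)\leq C_2\{{\rm\bf W}_p^{2R}\mu(x)+\inf_{B(x,R)}u\}$.

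The principal obstacle is the nonlinearity of $\mathcal{L}_p$ for $p\neq 2$, which prevents any direct additive split of $u$ into ``harmonic part plus potential" and forces the argument to proceed through the auxiliary correction $w_k$. The delicate step in the Kilpel\"ainen--Mal\'y proof is the sharp $L^\infty$ bound on $w_k$ with the precise exponent $1/(p-1)$; this rests on the capacitary equivalence \eqref{anular} together with Caccioppoli-type energy estimates, and in the nonlinear regime it cannot be obtained by naive duality. Since all the requisite analytic tools are available in the CC framework, the adaptation of \cite{KM} carried out in \cite{TW} goes through without essential modification, and the constants depend only on $p$ and $C_0$ because each individual ingredient does.
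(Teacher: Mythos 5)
The paper does not prove Theorem \ref{wes}; it is quoted as a known result, attributed to Kilpel\"ainen--Mal\'y \cite{KM} in the elliptic case, to Trudinger--Wang \cite{TW} for the Carnot--Carath\'eodory adaptation, and to \cite{BMS} for more general metric measure spaces. Your proposal is therefore not competing with any argument actually carried out in the text; it is rather a reconstruction of the strategy of the references the authors cite, and you yourself acknowledge this in the final sentence.

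As a summary of the \cite{KM}/\cite{TW} approach, the sketch is broadly along the right lines (dyadic comparison with the potentials of $\mu|_{B(x,t)}$ for the lower bound; an iterated decomposition $u = h_k + w_k$ into $\mathcal{L}_p$-harmonic replacement plus correction, with H\"older decay of the replacement and an $L^\infty$ bound on the correction, for the upper bound). The one point where the sketch misrepresents the difficulty is the claim that the key estimate
\[
\sup_{B_{k+1}}w_k \ \leq\ C\left[\frac{\mu(B(x,2r_k))}{r_k^{-p}|B(x,r_k)|}\right]^{1/(p-1)}
\]
follows by ``testing $w_k$ against itself and a Moser-type iteration.'' The actual crux of the KM argument is a scale-invariant weak Harnack inequality for nonnegative supersolutions with measure data, of the shape
\[
\left(\frac{1}{|B_r|}\int_{B_r} u^{\gamma}\,dy\right)^{1/\gamma}\ \leq\ C\left[\inf_{B_r}u + \left(\frac{\mu(B_{2r})}{r^{-p}|B_r|}\right)^{1/(p-1)}\right],
\]
whose proof requires a rather intricate interplay of Caccioppoli estimates for truncations of $u$, the Sobolev inequality, and a careful bookkeeping of the measure term at each level of the iteration; it is not a consequence of the linear Moser scheme applied to the correction $w_k$, and the harmonic replacement must be handled at each dyadic scale simultaneously rather than as a separate add-on. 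Likewise, for the lower bound, your pointwise estimate $v_t(x) \geq c[\mu(B_t)/t^{-p}|B_t|]^{1/(p-1)}$ at the \emph{center} of the ball does not follow merely from testing the weak formulation against a cut-off (that gives an integral bound); one needs either the lower semicontinuity together with a capacitary potential estimate, or a boundary-value comparison with the function $E(x,\cdot)^{-1}$ from \eqref{me2}. None of this is fatal, since you are ultimately deferring to \cite{TW}, but a reader relying only on the tools assembled in the present paper (equations \eqref{anular}, \eqref{HC}, \eqref{poincare}, Theorem \ref{T:cutoff}) would not be able to close the argument without importing the full machinery from \cite{KM} or \cite{TW}.
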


\section{Pointwise Hardy Inequalities }

We begin this section by generalizing a Sobolev type inequality
that, in the Euclidean setting, was found by Maz'ya in \cite{Ma2},
Chapter 10.

\begin{lemma}\label{SobCap} Let $K\subset\RR^n$ be a compact set with local parameters $C_0$ and
$R_0$, and for $x\in K$ and $r\leq R_{0}/2$, set $B=B(x,r)$. Given
$1\leq q<\infty$ there exists a constant $C>0$ depending only on
$C_0$ and $q$, such that for all $\varphi\in C^{\infty}(2B)$
\begin{equation}\label{capinequ}
|\varphi_{B}|\leq C \left(\frac{1}{{\rm cap}_{q}(\{\varphi=0\}\cap
\overline B, 2B)}\int_{2B}|X\varphi|^q dx \right)^{\frac{1}{q}}\ .
\end{equation}

\end{lemma}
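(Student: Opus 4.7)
\medskip

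\noindent\textbf{Proof plan.} The inequality is trivial when $\varphi_B = 0$, so assume $\varphi_B \neq 0$. Set $N = \{\varphi = 0\} \cap \overline{B}$; the idea, following Maz'ya, is to turn $1 - \varphi/\varphi_B$ into an admissible test function for the capacity of $N$ in $2B$ and then bound its energy by $\int_{2B} |X\varphi|^q$.

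First I would use Theorem \ref{T:cutoff} with, say, $s = 5r/4$ and $t = 3r/2$ (both $\le R_0$ by hypothesis) to produce $\eta \in C^{0,1}_d(\RR^n)$ with $0 \leq \eta \leq 1$, $\eta \equiv 1$ on a neighborhood of $\overline{B}$, $\mathrm{supp}\,\eta$ compactly contained in $2B$, and $|X\eta| \leq C/r$. Set
\[
u\ =\ \eta\!\left(1 - \frac{\varphi}{\varphi_B}\right).
\]
Then $u$ is Lipschitz (with respect to $d$) with compact support in $2B$, and $u \equiv 1$ on $N$, so $u$ is admissible for $\mathrm{cap}_q(N, 2B)$. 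The product rule gives
\[
|Xu|\ \leq\ |X\eta|\,\bigl|1 - \varphi/\varphi_B\bigr|\ +\ \frac{|X\varphi|}{|\varphi_B|},
\]
hence
\[
\mathrm{cap}_q(N, 2B)\ \leq\ \int_{2B} |Xu|^q\,dx\ \leq\ \frac{C}{r^q\,|\varphi_B|^q}\int_{2B}|\varphi_B - \varphi|^q\,dx\ +\ \frac{C}{|\varphi_B|^q}\int_{2B}|X\varphi|^q\,dx.
\]

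The main work is then to absorb the first term on the right. Writing $\varphi_B - \varphi = (\varphi_B - \varphi_{2B}) + (\varphi_{2B} - \varphi)$, the Jerison Poincar\'e inequality (Theorem \ref{T:jer}) applied on $2B$ (whose radius is still $\leq R_0$) bounds
\[
\int_{2B} |\varphi - \varphi_{2B}|^q\,dx\ \leq\ C\,r^q \int_{2B}|X\varphi|^q\,dx,
\]
while Jensen's inequality and the doubling estimate \eqref{doubling} give $|\varphi_B - \varphi_{2B}|^q \leq (C/|2B|)\int_{2B}|\varphi - \varphi_{2B}|^q\,dx$, so multiplying by $|2B|$ yields the same $C r^q \int_{2B}|X\varphi|^q\,dx$ bound. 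Combining these, $\int_{2B}|\varphi_B - \varphi|^q\,dx \leq C r^q \int_{2B}|X\varphi|^q\,dx$, which cancels the $r^{-q}$ in the capacity estimate and leaves
\[
\mathrm{cap}_q(N, 2B)\ \leq\ \frac{C}{|\varphi_B|^q}\int_{2B}|X\varphi|^q\,dx.
\]
Rearranging gives \eqref{capinequ}.

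\medskip

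\noindent\textbf{Main obstacle.} There is no genuinely hard step; the only subtlety is the bookkeeping around the cross term $|X\eta|\,|1 - \varphi/\varphi_B|$ in $|Xu|$, which carries a factor $r^{-q}$ that must be exactly cancelled by the Poincar\'e inequality. This forces one to apply Poincar\'e on $2B$ rather than $B$ (since $\varphi_B$, not $\varphi_{2B}$, appears), and therefore to control $|\varphi_B - \varphi_{2B}|$ via doubling --- which is exactly where the local parameter $C_0$ enters the constant.
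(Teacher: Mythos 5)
Your proof is correct and follows exactly the same route as the paper: both pass to the test function $\eta(\varphi_B-\varphi)/\varphi_B$ (your $u$ is the paper's $\phi$), estimate its energy by the product rule, and then absorb the $r^{-q}\int_{2B}|\varphi-\varphi_B|^q$ term using Jerison's Poincar\'e inequality on $2B$ together with the doubling property to control $|\varphi_B-\varphi_{2B}|$. The only differences are cosmetic: you specify the cutoff radii $s=5r/4$, $t=3r/2$ explicitly, which the paper leaves implicit.
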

\begin{proof}
We may assume that $\varphi_{B}\not=0$ for otherwise there is
nothing to prove. Let $\eta\in C^{0,1}_{d}(\Rn)$, $0\leq\eta\leq 1$,
$supp\ \eta \subset 2B$, $\eta=1$ on $\overline{B}$ and $|X\eta|\leq
\frac{C}{r}$, be a cut-off function as in Theorem \ref{T:cutoff}. Define
$\phi=\eta(\varphi_{B}-\varphi)/\varphi_{B}$, then $\phi\in
C^{0,1}_{d}(\Rn)$, $supp\ \phi \subset 2B$, and $\phi=1$ on
$\{\varphi=0\}\cap \overline{B}$. It thus follows that
\begin{eqnarray}\label{capleq}
\lefteqn{{\rm cap}_{q}(\{\varphi=0\}\cap \overline B, 2B)\leq \int_{2B}|X\phi|^{q}dx}\\
&\leq& |\varphi_{B}|^{-q} \int_{2B}|X\eta|^{q}|\varphi-\varphi_{B}|^{q}dx + |\varphi_{B}|^{-q}\int_{2B}|X\varphi|^{q}dx\nonumber\\
&\leq& C
|\varphi_{B}|^{-q}r^{-q}\int_{2B}|\varphi-\varphi_{B}|^{q}dx
+|\varphi_{B}|^{-q} \int_{2B}|X\varphi|^{q}dx\ .\nonumber
\end{eqnarray}
On the other hand, by Theorem \ref{T:jer} and by \eqref{doubling} we
infer
\begin{eqnarray*}
\int_{2B}|\varphi-\varphi_{B}|^{q}dx &\leq& C \int_{2B}|\varphi-\varphi_{2B}|^{q}dx +C \int_{2B}|\varphi_{B}-\varphi_{2B}|^{q}dx\\
&\leq& C r^{q}\int_{2B}|X\varphi|^{q}dx+ C\int_{2B}|\varphi-\varphi_{2B}|^{q}dx\\
&\leq& C r^{q}\int_{2B}|X\varphi|^{q}dx\ .
\end{eqnarray*}

Inserting the latter inequality in \eqref{capleq} we find
$${\rm cap}_{q}(\{\varphi=0\}\cap \overline B, 2B)\leq C |\varphi_{B}|^{-q}\int_{2B}|X\varphi|^{q}dx\ ,$$
which gives the desired inequality \eqref{capinequ}.

\end{proof}

We now introduce the notion of uniform $(X, p)$-fatness. As Theorem
\ref{summa} below proves, such notion turns out to be equivalent to
a pointwise Hardy inequality and to a uniform thickness property
expressed in terms of the Hausdorff content.

\begin{definition}\label{fatset}
We say that a set  $E\subset\RR^n$ is  uniformly $(X, p)$-fat with constants $c_0, r_0>0$ if
$${\rm cap}_{p}(E \cap \overline{B}(x,r), B(x,2r))\ \geq\ c_{0}\ {\rm cap}_{p}(\overline{B}(x,r), B(x,2r))$$
for all $x\in \partial E$ and for all $0<r\leq r_{0}$.
\end{definition}

The potential theoretic relevance of Definition \ref{fatset} is
underscored in Remark \ref{R:wiener}. From the latter it follows
that, if $\Rn\setminus \Om$ is uniformly $(X,p)$-fat, then for every
$x_0\in \p \Om$ one has for every $\varphi\in S^{1,p}(\Om)\cap
C(\overline \Om)$
\[
osc \{u, \Om\cap B(x_0,r)\}\ \leq\ osc \{\varphi, \partial \Om \cap
\overline B(x_0,2R)\}\ ,
 \]
and therefore $\Om$ is regular for the Dirichlet problem for the
subelliptic $p$-Laplacian $\mathcal L_p$.

Uniformly $(X, p)$-fat sets enjoy the following self-improvement
property which was discovered in \cite{Le} in the Euclidean setting.
Such property holds also in the setting of weighted Sobolev spaces
and degenerate elliptic equations \cite{Mik}. The proof in
\cite{Mik} uses the Wolff's potential and works also in the general
setting of metric spaces \cite{BMS}. For the sake of completeness,
we will include its details here.

\begin{theorem}\label{qthick} Let $\Om\subset\RR^n$ be a bounded domain with local
parameters $C_0$ and $R_0$. There exists a constant $0<r_0\leq
R_0/100$ such that  whenever $\RR^n\setminus\Om$ is uniformly $(X,
p)$-fat with constants $c_0$ and $r_0$, then it is also uniformly
$(X, q)$-fat for some $q<p$ with constants $c_1$ and $r_0$.
\end{theorem}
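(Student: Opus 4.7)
I adapt the Lewis--Mikkonen self-improvement scheme for $p$-fatness (\cite{Le}, \cite{Mik}, \cite{BMS}) to the subelliptic setting. The two essential ingredients already at hand are the annular condenser estimate \eqref{anular} and the Kilpel\"ainen--Mal\'y--Trudinger--Wang pointwise estimate for $\mathcal L_p$-superharmonic functions (Theorem \ref{wes}); the rest of the argument is potential-theoretic and metric.

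\textbf{Step 1 (capacitary potential and measure).} Choose $r_0\leq R_0/100$ so that \eqref{doubling}, \eqref{anular}, and Theorem \ref{wes} are available on every metric ball of radius $\leq 10 r_0$ centered in $\overline\Om$. Fix $x_0\in\partial\Om$ and $0<r\leq r_0$, and set $B=B(x_0,r)$, $K=(\RR^{n}\setminus\Om)\cap\overline B$. Let $u$ be the $(X,p)$-capacitary potential of $K$ in $2B$: the unique $u\in S^{1,p}_0(2B)$ with $0\leq u\leq 1$, $u=1$ quasi-everywhere on $K$, and $\mathcal L_p u=0$ in $2B\setminus K$. Let $\mu=-\mathcal L_p u$ be its capacitary measure; then $\mu$ is supported in $K$, $\mu(K)=\int_{2B}|Xu|^p\,dx={\rm cap}_p(K,2B)$, and by uniform $(X,p)$-fatness together with the lower half of \eqref{anular},
\[
\mu(K)\ \geq\ c_0\,{\rm cap}_p(\overline B,2B)\ \geq\ c_0 C_1\,\frac{|B|}{r^{p}}.
\]
Applying the upper estimate in Theorem \ref{wes} to $u$ gives $\mathbf W_p^{r/2}\mu(x)\leq C_1^{-1}u(x)\leq C_1^{-1}$ for every $x\in 2B$, so that $\mathbf W_p^{r/2}\mu$ is uniformly bounded on $K$.

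\textbf{Step 2 (transfer to $q<p$ and conclusion).} For $q\in(1,p)$ to be chosen, compare the $q$- and $p$-Wolff integrands via the identity
\[
\left[\frac{t^{q}\mu(B(x,t))}{|B(x,t)|}\right]^{\frac{1}{q-1}}\! =\ \left[\frac{t^{p}\mu(B(x,t))}{|B(x,t)|}\right]^{\frac{1}{p-1}}\!\left(\frac{t\,\mu(B(x,t))}{|B(x,t)|}\right)^{\frac{p-q}{(p-1)(q-1)}}\!,
\]
using the global bound $\mu(B(x,t))\leq\mu(K)\leq C|B|/r^{p}$ (upper half of \eqref{anular}) together with the doubling estimate \eqref{doubling} to control the second factor by a power of $r/t$. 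A Hölder step combined with a dyadic splitting of the radial integral $\int_0^{r/2}\cdots dt/t$ then produces, for every $q$ lying in a left neighbourhood $(q_*,p)$ of $p$ whose length depends only on $p$ and $Q$, a uniform bound $\mathbf W_q^{r/2}\mu(x)\leq C$ on $2B$. Finally I invoke the nonlinear Hedberg--Wolff capacitary estimate, which asserts that any Radon measure $\nu$ supported in a compact set $K\subset\overline B$ with $\mathbf W_q^{r/2}\nu\leq M$ $\nu$-a.e.\ satisfies ${\rm cap}_q(K,2B)\geq c\,\nu(K)\,M^{1-q}$. Its proof transfers verbatim to the CC setting using only the solvability of the $\mathcal L_q$ Dirichlet problem and the lower Wolff bound in Theorem \ref{wes}. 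Applying it to $\nu=\mu$ and using $\mu(K)\geq c|B|/r^{p}\geq c\,r_0^{q-p}\,|B|/r^{q}$, the upper half of \eqref{anular} yields
\[
{\rm cap}_q(K,2B)\ \geq\ c_1\,{\rm cap}_q(\overline B,2B),
\]
which is precisely uniform $(X,q)$-fatness with constants $c_1$ and $r_0$.

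\textbf{Principal obstacle.} The delicate step is the quantitative passage from the $p$-Wolff bound to a $q$-Wolff bound: the exponent arithmetic in the identity above interacts with the small-$t$ behaviour of $\mu(B(x,t))/|B(x,t)|$ and constrains $q$ to a subinterval $(q_*,p)$ whose size depends on $p$ and on the local homogeneous dimension $Q$. The Hedberg--Wolff capacitary lower bound is classical in spirit but must be formulated with care for the subelliptic $q$-capacity, since it requires constructing an $\mathcal L_q$-nonlinear potential of $\mu$ serving as an admissible competitor in the definition of ${\rm cap}_q$.
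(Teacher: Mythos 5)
Your Step 1 is set up essentially as the paper does: form the $(X,p)$-capacitary potential and its capacitary measure $\mu$, and use the fatness hypothesis together with \eqref{anular} to get $\mu(K)\geq c\,|B|/r^{p}$. (Minor slip: the bound $\mathbf W_p^{r/2}\mu \leq C_1^{-1}u \leq C_1^{-1}$ comes from the \emph{lower} Wolff estimate in Theorem \ref{wes}, not the upper one; this does not affect the logic.) However, you apply this to $K=(\RR^n\setminus\Om)\cap\overline B$ directly, whereas the paper first replaces this set by a carefully \emph{thickened} compact $K\subset(\RR^n\setminus\Om)\cap\overline B(x_0,R)$ that is uniformly $(X,p)$-fat at \emph{every} point of $\partial K$ (Lewis's construction). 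The hypothesis gives fatness only at points of $\partial\Om$, so without this thickening the subsequent boundary Wiener estimate is unavailable. This omission is not cosmetic; it is needed for the decisive estimate discussed below.

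The genuine gap is in Step 2. Boundedness of the $p$-Wolff potential yields only $\mu(B(x,t))\leq C\,t^{-p}|B(x,t)|$ (read off from $\int_{t}^{2t}[\cdot]^{1/(p-1)}\,ds/s\leq C$), and this is \emph{not} enough to bound the $q$-Wolff potential for any $q<p$. Indeed, plugging this bound into the $q$-integrand gives
\begin{equation*}
\left[\frac{t^{q}\mu(B(x,t))}{|B(x,t)|}\right]^{\frac{1}{q-1}}\ \leq\ C\,t^{\frac{q-p}{q-1}}\ ,
\end{equation*}
and since $q<p$ the exponent $(q-p)/(q-1)$ is negative, so $\int_0^{r/2}\cdot\,dt/t$ diverges. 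Your algebraic identity and the proposed "Hölder step with dyadic splitting" cannot repair this: writing $a_k=(2^{-k}r)^{p}\mu(B(x,2^{-k}r))/|B(x,2^{-k}r)|$, all you know is $\sum_k a_k^{1/(p-1)}\leq C$, while you need $\sum_k a_k^{1/(q-1)}\,2^{k(p-q)/(q-1)}\leq C$. The factor $2^{k(p-q)/(q-1)}$ grows geometrically, and the dyadic pieces $a_k^{1/(p-1)}$ of a finite sum need not decay geometrically (they may decay like $k^{-2}$, say), in which case the $q$-sum diverges for every $q<p$. What is actually required is the strictly stronger local decay
\begin{equation*}
\mu(B(x,t))\ \leq\ C\,R^{-\alpha(p-1)}\,t^{\alpha(p-1)-p}\,|B(x,t)|
\end{equation*}
for some $\alpha>0$ independent of $R$, valid for $x$ near $\partial K$. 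The paper obtains exactly this from the boundary Hölder modulus of continuity of the capacitary potential $\hat P_K$, which is supplied by the Wiener-type estimate of Theorem \ref{T:Wiener} \emph{applied to the thickened, uniformly fat set $K$}, combined with the lower Wolff bound of Theorem \ref{wes}. With this decay in hand one chooses $p-\alpha(p-1)<q<p$, rescales $\nu=R^{p-q}\mu$, and obtains $\mathbf W_q^{3R}\nu\leq M$ with $M$ independent of $R$; your argument never achieves this $R$-uniformity, so even granting the Hedberg--Wolff step at the end, the constants would degrade in $r$.

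The final Hedberg--Wolff capacitary step you describe is essentially the paper's: solve $-\mathcal L_q[v]=\nu$ in $2B$ with zero boundary data, bound $v$ from above by a constant via the upper Wolff estimate of Theorem \ref{wes} together with the infimum estimate coming from testing with $\min\{v,k\}$, and then test the equation against admissible competitors $\varphi\geq\chi_K$ to deduce $\nu(K)\leq c^{q-1}\,{\rm cap}_q(K,2B)$. This part is sound in outline, but it only delivers a uniform $c_1$ if the earlier Wolff bound on $\nu$ is $R$-uniform, which brings you back to the missing Hölder decay estimate. In short: the self-improvement does \emph{not} follow from boundedness of the $p$-Wolff potential plus algebra; it requires the boundary regularity (Wiener/Hölder) estimate for the capacitary potential of a thickened, uniformly fat compact, and this is the step you have not supplied.
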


\begin{proof} Let ${\rm dist}(x, \Om)=\inf\{d(x,y): y\in\Om\}$ and denote by $U\subset \Rn$ the compact set
$$U=\{x\in\RR^n: {\rm dist}(x, \Om)\leq R_0\}\ ,$$
with local parameters $C_1, R_1$. We will show  that if
$\RR^n\setminus\Om$ is uniformly $(X, p)$-fat with constants $c_0$
and $r_0 =\min\{R_0, R_1\}/100$, then it is also uniformly $(X,
q)$-fat for some $q<p$ with constants $c_1$ and $r_0$. To this end,
we fix $x_0\in
\partial\Om$ and $0<R\leq r_0$.
 Following \cite{Le}, we first claim that there exists a compact set $K\subset
(\RR^n\setminus\Om)\cap \overline{B}(x_0,R)$ containing $x_0$ such
that $K$ is uniformly $(X, p)$-fat with constants $c_1>0$ and $R$.
Indeed, let
 $E_1=(\RR^n\setminus\Om)\cap B(x_0,\tfrac{R}{2})$ and inductively let
$$E_{k}=(\RR^n\setminus\Om)\cap \left(\bigcup_{x\in E_{k-1}}B(x,\tfrac{R}{2^k})\right),\quad k\in \mathbb N\ .$$
Then it is easy to see that $K$ can be taken as the closure of $\cup_{k}E_k$.

Let now $B=B(x_0,R)$ and denote by $\hat{P}_{K}$ the potential of
$K$ in $2B$, i.e., $\hat{P}_{K}$ is the lower semicontinuous
regularization
$$\hat{P}_{K}(x)=\lim_{r\rightarrow 0}\inf_{B_{r}(x)}P_{K}\ ,$$
where $P_{K}$ is defined by
$$P_{K}=\inf\{u: u {\rm ~is~} \mathcal{L}_p{\text -}{\rm superharmonic~in~} 2B, {\rm ~and~} u\geq \chi_{K} \}.$$

Let $\mu=-\mathcal{L}_{p}[\hat{P}_{K}]$ then  $ supp\ \mu\subset
\partial K$ and
\begin{equation}\label{equm}
\mu(K)= {\rm cap}_{p}(K, 2B)\ .
\end{equation}

Moreover, $\hat{P}_{K}=P_K$ except for a set of zero capacity ${\rm
cap}_p(\cdot, 2B)$ (see \cite{TW}). Hence $\hat{P}_{K}$ is the
unique solution in $S^{1,p}_{0}(2B)$ to the Dirichlet problem \[
\mathcal{L}_{p}[u]=0 \quad {\rm in}\quad 2B\setminus K\ ,\ \ \ \
u-f\in S^{1,p}_{0}(2B\setminus K)\ , \] for any $f\in
C^{\infty}_{0}(2B)$ such that  $f\equiv 1$ on $K$. Thus by Theorem
\ref{T:Wiener} and the $(X,p)$-fatness of $K$ there are constants
$C>0$, $\alpha >0$ independent of  $R$ such that
\begin{equation}\label{osces}
 osc\ (\hat{P}_{K}, B(x,r))\leq C R^{-\alpha} r^{\alpha}
\end{equation}
for all $x\in\partial K$ and $0<r\leq R/2$. From the lower Wolff's potential estimate in Theorem \ref{wes} we have
\begin{eqnarray*}
\left[\frac{\mu(B(x,r))}{r^{-p}|B(x,r)|}\right]^{\frac{1}{p-1}}&\leq&
C\,
{\rm\bf W}_p^{2r}\mu(x) \leq C\left(\hat{P}_{K}(x)-\inf_{B(x,4r)}\hat{P}_{K}\right)\\
&\leq& C\,  osc\ (\hat{P}_{K}, B(x,4r))\ .
\end{eqnarray*}

Thus it follows from \eqref{osces} that
\begin{equation}\label{balles}
\mu(B(x,r))\leq C R^{-\alpha(p-1)}r^{\alpha(p-1)-p}|B(x,r)|
\end{equation}
for all $x\in\partial K$ and $0<r\leq R/8$. Moreover, since
$supp\ \mu\subset\partial K$ we see from the doubling property
\eqref{doubling} that \eqref{balles} holds also for all $x\in
B(x_0,2R)$ and $0<r\leq R/16$. In fact, it then holds for all
$R/16<r\leq 3R$ as well since, again by \eqref{doubling}, the ball
$B(x,r)$ can be covered by a fixed finite number of balls of
radius $R/16$.

We next pick $q\in\RR$ such that $p-\alpha(p-1)<q<p$ and define a
measure $\nu=R^{p-q}\mu$. From \eqref{balles} it follows that for
all $x\in B(x_0,2R)$,
\begin{equation}\label{uwes}
{\rm\bf W}_{q}^{3R}\nu(x)\leq C
R^{\frac{p-q-\alpha(p-1)}{q-1}}\int_{0}^{3R}
r^{\frac{q-p+\alpha(p-1)}{q-1}} \frac{dr}{r}\leq M\ ,
\end{equation}
where $M$ is independent of $R$. Thus by Lemma 3.3 in \cite{B},
$\nu$ belongs to the dual space of $S^{1,q}_{0}(2B)$  and there is a unique
solution $v\in S^{1,q}_{0}(2B)$  to the problem
\begin{eqnarray}\label{equnu}
\left\{\begin{array}{c}
-\mathcal{L}_{q}[v]=\nu \quad {\rm in}\quad 2B\\
v=0 \quad {\rm on} \quad \partial(2B)\ .
\end{array}
\right.
\end{eqnarray}

We now claim that
\begin{equation}\label{claim}
v(x)\leq c
\end{equation}
for all $x\in 2B$ and for a constant $c$ independent of $R$. To this
end, it is enough to show \eqref{claim} only for  $x\in
\overline{B}$ since $v$ is $\mathcal{L}_{q}$-harmonic in
$2B\setminus\overline{B}$ and $v=0$ on $\partial(2B)$. Fix now $x\in
\overline{B}$. By Theorem \ref{wes} we have
\begin{equation}\label{wtheterm}
v(x)\leq C\left\{ {\rm\bf W}_{q}^{3R}\nu(x) +
\inf_{B(x,\tfrac{R}{4})}v\right\}\ .
\end{equation}

To bound the term $\inf_{B(x,\tfrac{R}{4})}v$ in \eqref{wtheterm}, we first
use $\min\{v, k\}$, $k>0$, as a test function in \eqref{equnu} to
obtain
\begin{eqnarray}\label{deres}
\int_{2B}|X(\min\{v,k\})|^qdx
&=&\int_{2B}|Xv|^{q-2}Xv\cdot X(\min\{v,k\})dx\\
&=&\int_{2B}\min\{v, k\}d\nu\ \leq\ k\ \nu(K)\ .\nonumber
\end{eqnarray}

Consequently,
\begin{equation}\label{kes}
{\rm cap}_{q}(\{v\geq k\},2B)\leq \int_{2B}|X(\min\{v,k\}/k)|^qdx\leq k^{1-q}\nu(K)
\end{equation}
for any $k>0$. Inequality \eqref{kes} with $k=\inf_{B(x,\tfrac{R}{4})}v$
then gives
\begin{eqnarray*}
R^{-q}|B(x,R)|&\leq& C\, {\rm cap}_{q}(B(x,\tfrac{R}{4}), B(x,4R))\\
&\leq& C\, {\rm cap}_{q}(\{v\geq k\},2B)\\
&\leq& C  k^{1-q}\nu(K),
\end{eqnarray*}
which yields the estimate
\begin{equation}\label{esmin}
\inf_{B(x,\tfrac{R}{4})}v\leq C \left(\frac{\nu(K)}{R^{-q}|B(x,R)|}\right)^{\frac{1}{q-1}}.
\end{equation}

Combining \eqref{uwes}, \eqref{wtheterm} and \eqref{esmin} we obtain
\eqref{claim}, thus proving the claim. Note that for any $\varphi\in
C^{\infty}_{0}(2B)$  such that $\varphi\geq \chi_{K}$, by H\"older's
inequality and by applying \eqref{deres} with $k=c$ we have
\begin{eqnarray*}
\nu(K)&\leq& \int_{2B}\varphi d\nu=\int_{\Om}|Xv|^{q-2}Xv\cdot X\varphi dx\\
&\leq& \left(\int_{2B}|Xv|^{q}dx\right)^{\frac{q-1}{q}}\left(\int_{2B}|X\varphi|^{q}dx\right)^{\frac{1}{q}}\\
&\leq&
[c\,\nu(K)]^{\frac{q-1}{q}}\left(\int_{2B}|X\varphi|^{q}dx\right)^{\frac{1}{q}}.
\end{eqnarray*}

Thus minimizing over such functions $\varphi$ we obtain
$$\nu(K)\leq c^{q-1}\, {\rm cap}_{q}(K, 2B)\ .  $$

The latter inequality and \eqref{anular} give
\begin{eqnarray*}
{\rm cap}_{q}((\RR^n\setminus\Om)\cap\overline{B}, 2B)&\geq& {\rm cap}_{q}(K, 2B)\geq C\, \nu(K)=C R^{p-q}\mu(K)\\
&=& C R^{p-q}{\rm cap}_{p}(K, 2B)\geq C R^{p-q}{\rm cap}_{p}(\overline{B}, 2B)\\
&\geq& C R^{-q} |B|\geq C\,  {\rm cap}_{q}(\overline{B}, 2B)
\end{eqnarray*}
 by \eqref{equm} and the uniform $(X, p)$-fatness of $K$.
This proves that $\RR^n\setminus\Om$ is uniformly $(X, q)$-fat, thus
completing the proof of the theorem.

\end{proof}

In what follows given $f\in L^{1}_{\rm loc}(\RR^n)$ we will denote
by $\mathcal{M}_{R}$, $0<R<\infty$, the truncated centered
Hardy-Littlewood maximal function of $f$ defined by
$$\mathcal{M}_{R}(f)(x)=\sup_{0<r\leq R}\frac{1}{|B(x,r)|}\int_{B(x,r)}|f(y)|dy\ ,\quad \quad x\in\RR^n\ .$$

We note explicitly that if $R_1<R_2$, then $\mathcal
M_{R_1}(f)(x)\leq \mathcal M_{R_2}(f)(x)$. The first consequence of
the self-improvement property of uniformly $(X,p)$-fat set is the
following pointwise Hardy inequality which generalizes a result
originally found by Haj\l asz \cite{Ha} in the Euclidean setting.

\begin{theorem}\label{PH} Let $\Om\subset\RR^n$ be a bounded domain with local parameters $C_0$ and $R_0$.
Suppose that $\RR^n\setminus\Om$ is uniformly $(X,p)$-fat with
constants $c_0$ and $r_0$, where $0<r_0\leq R_0/100$ is as in
Theorem \ref{qthick}. There exist $1<q<p$ and a constant $C>0$, both
depending on $C_0$ and $p$, such that the inequality
\begin{equation}\label{pwh1}
|u(x)|\leq C \delta(x)\Big(\mathcal{M}_{4\delta(x)}(|\nabla u|^q)(x)\Big)^{\frac{1}{q}}
\end{equation}
holds for  all $x\in\Om$ with $\delta(x)<r_0$ and  all compactly
supported $u\in C^{0,1}_d(\Om)$.
\end{theorem}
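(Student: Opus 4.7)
The plan is to follow the Haj\l asz scheme adapted to the CC setting: use Theorem \ref{qthick} to lower the exponent below $p$, apply the capacitary Sobolev inequality of Lemma \ref{SobCap} near a well-chosen boundary point, and telescope concentric averages back to the point $x$. The first move is the self-improvement Theorem \ref{qthick}, which supplies an exponent $1<q<p$ for which $\RR^n\setminus\Om$ is uniformly $(X,q)$-fat with constants $c_1$ and $r_0$. Fix $x\in\Om$ with $r:=\delta(x)<r_0$. Since closed metric balls of radius $\leq R_0$ are compact (by choice of $R_0$), the infimum defining $\delta(x)$ is attained, so I choose $x_0\in\partial\Om$ with $d(x,x_0)=r$. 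Extending $u$ by zero outside $\Om$ keeps it in $C^{0,1}_d(\RR^n)$ (because $\mathrm{supp}\,u$ is compactly contained in $\Om$) and, crucially, makes $u$ vanish on $\RR^n\setminus\Om$.

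The core estimate is obtained by applying Lemma \ref{SobCap} with exponent $q$ to $B:=B(x_0,r)$. Because $u\equiv 0$ on $\RR^n\setminus\Om$, one has $\{u=0\}\cap\overline B\supset (\RR^n\setminus\Om)\cap\overline B$, so the $(X,q)$-fatness at $x_0$ combined with the annular capacity lower bound \eqref{anular} gives
\[
{\rm cap}_q(\{u=0\}\cap\overline B,\,2B)\ \geq\ c_1\,{\rm cap}_q(\overline B,\,2B)\ \geq\ C\,\frac{|B|}{r^q}.
\]
Inserted into \eqref{capinequ}, this yields
\[
|u_B|\ \leq\ C\,r\left(\frac{1}{|B|}\int_{2B}|Xu|^q\,dy\right)^{1/q}.
\]
Since $d(x,x_0)=r$, the triangle inequality gives $2B\subset B(x,4r)$, and the doubling property \eqref{doubling} gives $|B(x,4r)|\leq C|B|$. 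Therefore $|u_B|\leq C\,r\,\bigl(\mathcal M_{4r}(|Xu|^q)(x)\bigr)^{1/q}$.

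It remains to connect $u(x)$ to the average $u_B$. Set $B_k:=B(x,2^{1-k}r)$, so $B_0=B(x,2r)$ and $u(x)=\lim_k u_{B_k}$ by continuity. Jerison's Poincar\'e inequality (Theorem \ref{T:jer}) together with H\"older and doubling gives $|u_{B_{k+1}}-u_{B_k}|\leq C\,2^{-k}r\,(\mathcal M_{2r}(|Xu|^q)(x))^{1/q}$, and summing the geometric series yields $|u(x)-u_{B_0}|\leq C\,r\,(\mathcal M_{2r}(|Xu|^q)(x))^{1/q}$. A single analogous estimate---based on Poincar\'e on $B_0$ and the doubling comparison $|B_0|\leq C|B|$---bounds $|u_{B_0}-u_B|$ by the same quantity. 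Adding these to the bound on $|u_B|$ obtained above completes the proof.

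The conceptual crux is the second step: the fatness hypothesis is precisely what converts the fact that $u$ vanishes on a \emph{sizable} portion of $\overline B$ into an $L^q$-capacitary lower bound, and the self-improvement Theorem \ref{qthick} is what makes the resulting exponent strictly smaller than $p$---essential for the later application of the Hardy-Littlewood maximal theorem when turning \eqref{pwh1} into the integral Hardy inequality \eqref{har}. The remaining difficulty is purely bookkeeping, namely tracking doubling constants when comparing balls centered at $x$ with balls centered at $x_0$, and should go through without surprises.
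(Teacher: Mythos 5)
Your proof is correct and follows the same overall structure as the paper's: invoke Theorem \ref{qthick} to lower the exponent to $q<p$, put the ball $B$ at a nearest boundary point, use the fatness together with \eqref{anular} to get a capacitary lower bound, feed this into Lemma \ref{SobCap} to control $|u_B|$, and then control $|u(x)-u_B|$ and combine. The one place where you deviate is the bound on $|u(x)-u_B|$: the paper invokes the pointwise Poincar\'e representation of Capogna--Danielli--Garofalo (their Theorem 1.1 in \cite{CDG}), which gives $|u(x)-u_B|\leq C\int_{2B}|Xu(y)|\,d(x,y)/|B(x,d(x,y))|\,dy$, then performs the dyadic decomposition in \eqref{Max1} and finishes with H\"older to raise the exponent to $q$. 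You instead telescope averages over concentric balls $B(x,2^{1-k}r)$ and apply Jerison's Poincar\'e inequality (Theorem \ref{T:jer}) with exponent $q$ at each scale, which lands directly on the $L^q$-maximal function without the intermediate Riesz-potential step and without the final H\"older. Both routes are standard and yield the same estimate; your version is more self-contained (it does not import the CDG mollifier estimate, only Jerison's Poincar\'e, which is already used elsewhere), at the cost of needing the extra comparison $|u_{B_0}-u_B|$ between the ball centered at $x$ and the ball centered at $x_0$ --- which you handle correctly via doubling. No gap.
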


\begin{proof}
For $x\in\Om$ with $\delta(x)<r_0$, we let
$B=B(\overline{x},\delta(x))$, where $\overline{x}\in\partial\Om$ is
chosen so that $|x-\overline{x}|=\delta(x)$. By the fatness
assumption and Theorem \ref{qthick}, there exists $1<q<p$ such that
$${\rm cap}_{1,\,q}(\overline{B}\cap(\RR^n\setminus\Om), 2B)\geq C|B|\delta(x)^{-q}.$$

Thus by Lemma \ref{SobCap} above and Theorem 1.1 in \cite{CDG},
\begin{eqnarray}\label{uofx}
\lefteqn{u(x) \leq |u(x)-u_{B}| +|u_{B}|}\\
&\leq& C \int_{2B}|Xu(y)| \frac{d(x,y)}{|B(x, d(x,y))|}dy+C \Big(\frac{\int_{2B}|Xu|^q dx }{|B|\delta(x)^{-q}}
\Big)^{\frac{1}{q}}.\nonumber
\end{eqnarray}

Note that  by the doubling property \eqref{doubling},
\begin{eqnarray}\label{Max1}
\lefteqn{\int_{2B}|Xu(y)| \frac{d(x,y)}{|B(x, d(x,y))|}dy}\\
&\leq& \int_{B(x,4\delta(x))}|Xu(y)| \frac{d(x,y)}{|B(x,d(x,y))|}dy \nonumber\\
&=& \sum_{k=0}^{\infty} \int_{B(x, 2^{-k}4\delta(x))\setminus
B(x, 2^{-k-1}4\delta(x))}|Xu(y)| \frac{d(x,y)}{|B(x, d(x,y))|}dy\nonumber\\
&\leq& C \sum_{k=0}^{\infty} \frac{2^{-k}4\delta(x)}{|B(x,2^{-k}4\delta(x) )|}\int_{B(x, 2^{-k}4\delta(x))}|Xu(y)| dy\nonumber\\
&\leq& C \delta(x) \mathcal{M}_{4\delta(x)}(|Xu|)(x)\nonumber.
\end{eqnarray}

Also,
\begin{eqnarray}\label{Max2}
\Big(\frac{\int_{2B}|Xu|^q dx }{|B|\delta(x)^{-q}}
\Big)^{\frac{1}{q}}&\leq& C\delta(x) \Big(\frac{\int_{B(x,4\delta(x))}|Xu|^q dx }{|B(x, 4\delta(x))|}
\Big)^{\frac{1}{q}}\\
&\leq&  C \delta(x) \Big(\mathcal{M}_{4\delta(x)}(|Xu|^{q})(x)\Big)^{\frac{1}{q}}.\nonumber
\end{eqnarray}

>From \eqref{uofx}, \eqref{Max1}, \eqref{Max2} and H\"older's
inequality we  now obtain
$$u(x)\leq C\delta(x)\Big(\mathcal{M}_{4\delta(x)}(|Xu|^{q})(x)\Big)^{\frac{1}{q}},$$
which completes the proof of the theorem.
\end{proof}

As it turns out, the pointwise Hardy inequality \eqref{pwh1} is in
fact equivalent to certain geometric conditions on the boundary of
$\Om$ that can be measured in terms of a Hausdorff content. We
introduce the relevant definition.

\begin{definition}\label{sec} Let $s\in \RR$, $r>0$ and $E\subset \RR^n$. The $(X, s, r)$-Hausdorff content of $E$
is the number
$$\mathcal{\widetilde{H}}^{s}_{r}(E)=\inf\sum_{j}r_j^{s}|B_j|\ ,$$
where the infimum is taken over all coverings of $E$ by balls $B_j=B(x_j, r_j)$ such that $x_j\in E$ and $r_j\leq r$.
\end{definition}

We next follow the idea in \cite{Lehr} to prove the following
important consequence of the pointwise Hardy inequality
\eqref{pwh1}.

\begin{theorem}\label{T1}
Let $\Om\subset\RR^n$ be a bounded domain with local parameters
$C_0$ and $R_0$.  Suppose that there exist $r_0\leq R_0/100$, $q>0$
and a constant $C>0$ such that the inequality
\begin{equation}\label{pwh}
|u(x)|\leq C \delta(x)\Big(\mathcal{M}_{4\delta(x)}(|\nabla u|^q)(x)\Big)^{\frac{1}{q}}
\end{equation}
holds for  all $x\in\Om$ with $\delta(x)<r_0$ and  all compactly
supported $u\in C^{0,1}_d(\Om)$. There exists $C_1>0$ such that the
inequality
\begin{equation}\label{Con1}
\mathcal{\widetilde{H}}^{-q}_{\delta(x)}(\overline{B}(x, 2\delta(x))\cap\partial\Om)\geq C_1\delta(x)^{-q}|B(x, \delta(x))|
\end{equation}
holds for all $x\in\Om$ with $\delta(x)<r_0$.
\end{theorem}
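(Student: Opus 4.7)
The plan is to establish \eqref{Con1} by showing, for each $x\in\Omega$ with $R:=\delta(x)<r_0$, that every covering of $E:=\overline B(x,2R)\cap\partial\Omega$ by balls $B_j=B(x_j,r_j)$ with $x_j\in E$ and $r_j\le R$ satisfies $\sum_j r_j^{-q}|B_j|\ge C_1R^{-q}|B(x,R)|$; infimizing then yields \eqref{Con1}. A preliminary case disposes of ``large'' balls: if some $r_{j_0}\ge R/4$, then $x_{j_0}\in\overline B(x,2R)$ together with the doubling property \eqref{doubling} forces $|B_{j_0}|$ to be comparable to $|B(x,R)|$ and $r_{j_0}^{-q}$ to $R^{-q}$, so the single summand already dominates. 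From here on I assume $r_j<R/4$ for every $j$; a crucial consequence is that each $2B_j$ is disjoint from $B(x,R/2)$, since $d(x,x_j)\ge R>R/2+2r_j$.

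The test function I would construct is $u:=\eta(1-\chi)$, where $\chi:=\max_j\chi_j$ is assembled from covering cutoffs $\chi_j\in C^{0,1}_d(\RR^n)$ with $\chi_j\equiv 1$ on $B_j$, $\chi_j\equiv 0$ off $2B_j$, and $|X\chi_j|\le C/r_j$ (Theorem \ref{T:cutoff}), while $\eta$ is a cutoff with $\eta\equiv 1$ on $B(x,3R/2)$, $\eta\equiv 0$ off $B(x,2R)$, and $|X\eta|\le C/R$. Routine verification shows $u(x)=1$ (since $\chi(x)=0$), $u\equiv 0$ on an open neighborhood of $E$ (by compactness of $E$ in the open cover $\bigcup_j B_j$), and $u\in C^{0,1}_d(\Omega)$ with compact support in $\Omega$; hence $u$ is admissible in the hypothesized inequality \eqref{pwh}. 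Applying \eqref{pwh} at $x$ gives $\mathcal M_{4R}(|Xu|^q)(x)\ge (CR)^{-q}$, and the pointwise bound $|Xu|^q\le C(|X\eta|^q+|X\chi|^q)$ reduces the argument to two maximal-function estimates.

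The key step is the covering estimate: using $|X\chi|^q\le\sum_j(C/r_j)^q\mathbf 1_{2B_j}$, the disjointness of each $2B_j$ from $B(x,R/2)$, and doubling, a direct computation yields
\[
\mathcal M_{4R}(|X\chi|^q)(x)\ \le\ \frac{C}{|B(x,R)|}\sum_j r_j^{-q}|B_j|.
\]
The corresponding cutoff estimate is $\mathcal M_{4R}(|X\eta|^q)(x)\le CR^{-q}$, obtained by inspecting the support of $X\eta$ and a further application of doubling.

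The hardest part is that this cutoff contribution $CR^{-q}$ has the same order as the pointwise Hardy lower bound $(CR)^{-q}$, so one cannot absorb it naively. To get around this, I would replace $\eta$ by a wider cutoff $\eta_M$ with $\eta_M\equiv 1$ on $B(x,MR)$ and $\eta_M\equiv 0$ off $B(x,2MR)$ for some fixed $M\ge 4$, placing $\mathrm{supp}(X\eta_M)$ outside $B(x,4R)$ and thus making $\mathcal M_{4R}(|X\eta_M|^q)(x)=0$. In compensation, $\chi$ must be augmented so that $u$ still vanishes on $\partial\Omega$ throughout $\mathrm{supp}(\eta_M)$; this is achieved by adjoining cutoffs for a bounded (dimension-dependent) collection of balls of radius comparable to $R$ centered on $\partial\Omega\cap(B(x,2MR)\setminus\overline B(x,2R))$. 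Only the few added balls lying within distance $\sim R$ of $x$ can contribute to $\mathcal M_{4R}$ at $x$, and their combined contribution is bounded by a constant depending only on $C_0$ and $M$. Substituting into \eqref{pwh} and carefully tracking constants---which may depend on $C$, $C_0$, $M$, and $q$---yields the target lower bound and hence \eqref{Con1}.
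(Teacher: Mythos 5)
Your construction is correct in spirit, and you have correctly identified the delicate point: the outer cutoff $\eta$ at scale $R=\delta(x)$ contributes a term of order $R^{-q}$ to $\mathcal M_{4R}(|Xu|^q)(x)$, which is the same order as the lower bound $(C R)^{-q}$ coming from \eqref{pwh}, so it cannot be absorbed. Unfortunately, your proposed fix (widening the cutoff to $\eta_M$ and adjoining extra covering cutoffs for $\partial\Om\cap\bigl(B(x,2MR)\setminus\overline B(x,2R)\bigr)$) does not resolve this. The centers $y'$ of these new balls lie on $\partial\Om$, so $d(x,y')>2R$, which forces their radii $\rho'\leq R$ (otherwise $x\in 2B'$ and $u(x)\neq 1$). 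Balls with $d(x,y')$ close to $2R$ and $\rho'\sim R$ then satisfy $2B'\cap B(x,4R)\neq\emptyset$, and each such ball contributes $\sim \rho'^{-q}\,|2B'|/|B(x,R)|\sim R^{-q}$ to $\mathcal M_{4R}(|X\chi|^q)(x)$, with a constant of order one determined only by the cutoff and doubling constants. This is exactly the same order as $(C R)^{-q}$, and since $C$ in \eqref{pwh} may be large, the added-ball contribution cannot be assumed smaller: you end up with an inequality $C^{-q}R^{-q}\leq C''|B(x,R)|^{-1}\sum_j r_j^{-q}|B_j| + C'R^{-q}$ from which nothing follows unless $C'<C^{-q}$, which you have no way to ensure.

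The missing ingredient is a reduction that eliminates the need for any auxiliary cutoffs at scale $\sim R$ inside $B(x,4R)$. Using the continuity of $\delta$ and the doubling property \eqref{doubling}, the estimate \eqref{Con1} is equivalent (with a possibly different constant) to
\begin{equation*}
\mathcal{\widetilde H}^{-q}_{\delta(x)/4}\bigl(\overline B(x,5\delta(x))\cap\partial\Om\bigr)\ \geq\ C_2\,\delta(x)^{-q}\,|B(x,\delta(x))|\quad\text{for all }x\in\Om,\ \delta(x)<r_0.
\end{equation*}
Working with this reformulation (the paper proves it by contradiction, but a direct argument also works), the admissible coverings $\{B_j\}$ already cover $\partial\Om\cap\overline B(x,5R)$, and the constraint $r_j\leq R/4$ guarantees both $x\notin 2B_j$ (hence $u(x)=1$, as $d(x,x_j)\geq R>4r_j$) and that $\mathrm{supp}(X\chi)\cap B(x,4R)$ lies at distance $\geq R/4$ from $x$. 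One then takes an outer cutoff $\phi$ equal to $1$ on $B(x,4R)$ and supported in $B(x,5R)$: its gradient lives entirely outside $B(x,4R)$ and is therefore invisible to the truncated maximal function $\mathcal M_{4R}$, with no extra boundary cutoffs required. This is precisely the device used in the paper, and it is what your construction is missing.
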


\begin{proof} We argue by contradiction and suppose that \eqref{Con1} fails. We can thus find a sequence
$\{x_k\}_{k=1}^{\infty}\subset\Om$, with $\delta(x_k)<r_0$, such
that
\begin{equation*}
\mathcal{\widetilde{H}}^{-q}_{\delta(x)/4}(\overline{B}(x_k,
5\delta(x_k))\cap\partial\Om)< k^{-1}\delta(x_k)^{-q}|B(x_k,
\delta(x_k))|\ .
\end{equation*}

Here, we have used the fact that, by the continuity of the distance
function $\delta$ and the doubling property \eqref{doubling}, the
inequality \eqref{Con1}, which holds for all $x\in\Om$ with
$\delta(x)<r_0$, is equivalent to the validity of
\begin{equation*}
\mathcal{\widetilde{H}}^{-q}_{\delta(x)/4}(\overline{B}(x, 5\delta(x))\cap\partial\Om)\geq C_2\delta(x)^{-q}|B(x, \delta(x))|
\end{equation*}
for all $x\in\Om$ with $\delta(x)<r_0$ and for a constant $C_2>0$.
By compactness, we can now find a finite covering
$\{B_i\}_{i=1}^{N}$, $B_i=B(z_i, r_i)$ with $z_i\in
\overline{B}(x_k, 5\delta(x_k)) \cap\partial\Om$ and
$0<r_i<\delta(x_k)/4$, such that
\begin{equation}\label{overB}
\overline{B}(x_k, 5\delta(x_k))\cap\partial\Om\ \subset\
\bigcup_{i=1}^{N}B_i
\end{equation}
and
\begin{equation}\label{Hleq}
\sum_{i=1}^{N}r_i^{-q}|B_i|<k^{-1}\delta(x_k)^{-q}|B(x_k,
\delta(x_k))|\ .
\end{equation}

Next, for each $k\in \NN$, we define a function $\varphi_k$ by
$$\varphi_{k}(x)=\min\{1,\min_{1\leq i\leq N} r_i^{-1}{\rm dist}(x, 2B_i)\}$$
and let $\phi_{k}\in C_{d}^{0,1}(B(x_{k}, 5\delta(x_k)))$ be such
that $0\leq\phi_{k}\leq 1$ and $\phi_k \equiv 1$ on $B(x_{k},
4\delta(x_k))$. Clearly, the function $u_k=\phi_k\varphi_k$ belongs
to $C^{0,1}_d(\Om)$ and, in view of \eqref{overB}, it has compact
support. Moreover, $u_k(x_k)=1$ since from  the fact that
$z_i\in\partial\Om$  we have
\begin{equation}\label{distxk}
d(x_k,z_i)\geq \delta(x_k)> 4 r_i
\end{equation}
for all $1\leq i\leq N$. Also, since $\varphi_{k}(x)=1 $ for
$x\not\in\bigcup_{i=1}^{N}3\overline{B}_{i}$ and $\varphi_{k}(x)=0 $
for $x\in\bigcup_{i=1}^{N}2\overline{B}_{i}$, it is  easy to see that
$$supp\ (|Xu_{k}|)\cap B(x_k, 4 \delta(x_k))\subset \bigcup_{i=1}^{N}(3\overline{B}_{i}\setminus 2B_i)$$
and that for a.e. $y\in B(x_k, 4\delta(x_k))$ we have
\begin{equation}\label{nabu}
|Xu_{k}(y)|^{q}\leq
\sum_{i=1}^{N}r_i^{-q}\chi_{3\overline{B}_i\setminus 2B_i}(y)\ .
\end{equation}

Hence using \eqref{distxk} and \eqref{nabu} we can calculate

\begin{eqnarray}\label{maxf}
\lefteqn{\mathcal{M}_{4\delta(x_k)}(|Xu_{k}|^{q})(x_k)}\\
&\leq& C \sup_{\frac{1}{4}\delta(x_k)\leq r\leq 4\delta(x_k)}
\frac{1}{|B(x_k, r)|}\int_{B(x_k, r)}|X u_{k}(y)|^{q}dy\nonumber\\
&\leq& C \frac{1}{|B(x_k, \delta(x_k))|}\int_{B(x_k, 4\delta(x_k))}|X u_{k}(y)|^{q}dy\nonumber\\
&\leq& C \frac{1}{|B(x_k, \delta(x_k))|}\sum_{i=1}^{N}|3\overline{B_i}\setminus 2B_i| r_i^{-q}\nonumber\\
&\leq& C \frac{1}{|B(x_k, \delta(x_k))|}\sum_{i=1}^{N}|B_i| r_i^{-q}.\nonumber
\end{eqnarray}

From \eqref{Hleq} and \eqref{maxf} we obtain
$$\delta(x_k)^{q}\mathcal{M}_{4\delta(x_k)}(|Xu_{k}|^{q})(x_k)\leq C k^{-1}.$$
Since $u_{k}=1$ for any $k$, this implies that the pointwise Hardy
inequality \eqref{pwh} fails to hold with a uniform constant for all
compactly supported $u\in C^{0,1}_d(\Om)$. This contradiction
completes the proof of the theorem.

\end{proof}

As in \cite{Lehr}, from \eqref{Con1} we can also obtain the following thickness condition on $\RR^n\setminus\Om$.

\begin{theorem}\label{T2}
Let $\Om\subset\RR^n$ be a bounded domain with local parameters
$C_0$ and $R_0$.  Suppose that there exist $r_0\leq R_0/100$, $q>0$
and a constant $C>0$ such that the inequality
\begin{equation}\label{Con12}
\mathcal{\widetilde{H}}^{-q}_{\delta(x)}(\overline{B}(x, 2\delta(x))\cap\partial\Om)\geq C\delta(x)^{-q}|B(x, \delta(x))|
\end{equation}
holds for all $x\in\Om$ with $\delta(x)<r_0$. Then, there exists
$C_1>0$ such that
\begin{equation}\label{Hr}
\mathcal{\widetilde{H}}^{-q}_{r}(B(w,r)\cap (\RR^n\setminus\Om))\geq C_1 r^{-q}|B(w,r)|
\end{equation}
for all $w\in\partial\Om$ and $0<r<r_0$.
 \end{theorem}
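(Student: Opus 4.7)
The plan is to dualize the statement: fix $w\in\partial\Om$ and $0<r<r_0$, consider an arbitrary admissible covering $\{B_j=B(z_j,\rho_j)\}_j$ of $E:=B(w,r)\cap(\RR^n\setminus\Om)$ with $z_j\in E$ and $\rho_j\le r$, and show that $\sum_j\rho_j^{-q}|B_j|\gtrsim r^{-q}|B(w,r)|$ with a constant independent of the covering.

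I would split the argument using a small parameter $\epsilon>0$. \emph{Volumetric case:} if $|E|\ge\epsilon|B(w,r)|$, the conclusion is immediate since $\rho_j\le r$ gives $\rho_j^{-q}\ge r^{-q}$, hence $\sum_j\rho_j^{-q}|B_j|\ge r^{-q}\sum_j|B_j|\ge r^{-q}|E|\ge\epsilon\,r^{-q}|B(w,r)|$. \emph{Fat-interior case:} otherwise $|\Om\cap B(w,r)|\ge(1-\epsilon)|B(w,r)|$, and I claim one can produce an interior point $x_0\in\Om$ with $d(x_0,w)\le r/2$, $\delta(x_0)\in[c_1r,r/8]$, and hence $\overline{B}(x_0,2\delta(x_0))\subset B(w,r)$. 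Specifically, if there exists $y\in\Om\cap B(w,r/2)$ with $\delta(y)\ge r/8$, then by Chow--Rashevsky accessibility we join $y$ to $w$ by a subunit curve along which the continuous function $\delta$ decays from $\delta(y)$ to $0$, and by the intermediate value theorem we select on this curve the first point $x_0$ where $\delta$ attains the value $c_1r$ (for $c_1<1/8$).

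Granting $x_0$, set $E':=\overline{B}(x_0,2\delta(x_0))\cap\partial\Om\subset E$. The hypothesis \eqref{Con12} applied at $x_0$ reads
\[
\mathcal{\widetilde{H}}^{-q}_{\delta(x_0)}(E')\ge C\,\delta(x_0)^{-q}|B(x_0,\delta(x_0))|\gtrsim r^{-q}|B(w,r)|,
\]
where the last step uses $\delta(x_0)\sim r$ together with the doubling property \eqref{doubling}. The covering $\{B_j\}$ certainly covers $E'$, but its radii may exceed $\delta(x_0)$ and its centers need not lie in $E'$. I would therefore convert it into an admissible covering of $E'$ at scale $\delta(x_0)$ as follows: for each $B_j$ meeting $E'$, (i) if $\rho_j\le\delta(x_0)/2$, choose $z_j'\in B_j\cap E'$ and replace $B_j$ by $B(z_j',2\rho_j)$; (ii) if $\rho_j>\delta(x_0)/2$, cover $B_j\cap E'$ by at most $C(\rho_j/\delta(x_0))^Q$ balls of radius $\delta(x_0)/2$ with centers in $E'$, via a standard Vitali argument. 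Using doubling and the bound $\rho_j\le r$, in both cases the replacement cost is bounded by $C(r/\delta(x_0))^q\rho_j^{-q}|B_j|$, so the total cost of the new covering is $\lesssim(r/\delta(x_0))^q\sum_j\rho_j^{-q}|B_j|$, comparable to $\sum_j\rho_j^{-q}|B_j|$ because $\delta(x_0)\sim r$. Feeding this into the lower bound on $\mathcal{\widetilde{H}}^{-q}_{\delta(x_0)}(E')$ closes the estimate.

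The main obstacle is the corner of the fat-interior regime in which every $y\in\Om\cap B(w,r/2)$ satisfies $\delta(y)<r/8$, so the intermediate value extraction of $x_0$ fails---even though $\Om$ has large Lebesgue measure in $B(w,r)$. Purely volumetric reasoning does not exclude a ``filigree-like'' configuration here, so one must either bring in the rigidity forced by \eqref{Con12} on the boundary geometry to show that such pathological domains cannot occur, or bypass the single-point extraction altogether by summing the inequalities produced by \eqref{Con12} over a Whitney family of interior points $\{x_\alpha\}\subset\Om\cap B(w,r/2)$, controlling the resulting overcounting of subcovers through the bounded-overlap property of the Whitney balls and a careful scale-by-scale bookkeeping. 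Once $x_0$ (or the analogous family) is produced, the conversion arguments above are routine consequences of the Nagel--Stein--Wainger estimates \eqref{nsw2}--\eqref{doubling}.
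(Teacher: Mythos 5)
Your high-level dichotomy (volumetric case vs.\ fat-interior case) matches the structure of the paper's proof, and your volumetric case is handled correctly. The genuine gap is exactly the one you flag yourself: the single-point extraction of $x_0$ with $\delta(x_0)\sim r$ can fail, and your proposed repair---summing \eqref{Con12} over a Whitney family with bounded overlap and ``careful scale-by-scale bookkeeping''---does not actually contain the idea that makes the multi-point version work. The missing device is an \emph{exclusion step}: before producing the interior family $\{x_k\}$, one should first subtract the dilated covering balls from $\Om\cap B(w,\tfrac{r}{2})$. Concretely, the paper shows that if $\sum_i |B_i|$ is not already large (in which case the bound is cheap), then
\[
\bigl| (B(w,\tfrac{r}{2})\cap\Om)\setminus \bigcup_i 2B_i \bigr| \ \gtrsim\ |B(w,r)|,
\]
and it then applies a Vitali $5r$-covering lemma to this \emph{depleted} set to produce a pairwise disjoint family $B(x_k,6\delta(x_k))$ with $x_k\notin\bigcup_i 2B_i$ and $\sum_k|B(x_k,30\delta(x_k))|\gtrsim|B(w,r)|$.

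The exclusion $x_k\notin 2B_i$ is what buys the two things your scheme lacks. First, it forces the scale control $r_i<2\delta(x_k)$ for every $B_i$ meeting $\overline{B}(x_k,2\delta(x_k))$, so those $B_i$ are automatically admissible for $\mathcal{\widetilde H}^{-q}_{2\delta(x_k)}$ without any of your ``conversion at scale $\delta(x_0)$'' machinery---and in the multi-point setting there is no common scale anyway, since the $\delta(x_k)$ vary over dyadic ranges. Second, combined with $r_i<2\delta(x_k)$ it yields $B_i\subset B(x_k,6\delta(x_k))$, and since the balls $B(x_k,6\delta(x_k))$ are pairwise disjoint (not merely of bounded overlap), each $B_i$ is charged to at most \emph{one} index $k$. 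Bounded overlap of Whitney balls alone does not give this, because the covering balls $B_i$ live at arbitrary scales relative to the Whitney scales; without the exclusion, a single large $B_i$ could meet many $\overline{B}(x_k,2\delta(x_k))$ across different generations and you would have no a priori bound on the overcount. So the ``filigree'' obstacle you identify is real, and the fix is not a generic Whitney/bounded-overlap argument but the specific Vitali-plus-exclusion construction, which you would need to supply to close the proof. (The paper also handles one further easy sub-case you omit---when some $x\in B(w,\tfrac{r}{2})\cap\Om$ has $\delta(x)\geq \tfrac{r}{4}$, it applies \eqref{Con12} at that point directly, which is essentially your $x_0$ case---but that is a minor structural point.)
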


\begin{proof}
Let $w\in\partial\Om$ and $0<r<r_0$. If
\begin{equation*}
|B(w, \tfrac{r}{2})\cap(\RR^n\setminus\Om)|\geq \tfrac{1}{2}|B(w,\tfrac{r}{2})|
\end{equation*}
then it is easy to see that \eqref{Hr} holds with $C_1=2^{-Q}C_0/2$.
Thus we may assume that
\begin{equation*}
|B(w, \tfrac{r}{2})\cap\Om|\geq \tfrac{1}{2}|B(w,\tfrac{r}{2})|\ ,
\end{equation*}
which by \eqref{doubling} gives
\begin{equation}\label{1minus}
|B(w, \tfrac{r}{2})\cap\Om|\ \geq\ 2^{-Q}C_0\ |B(w,r)|/2\ .
\end{equation}

Now to prove \eqref{Hr} it is enough to show that
\begin{equation}\label{Hr1}
\mathcal{\widetilde{H}}^{-q}_{r}(B(w,r)\cap \partial\Om)\geq C_1
r^{-q}|B(w,r)|\ .
\end{equation}

To this end, let $\{B_i\}_{i=1}^{\infty}$, $B_i=B(z_i, r_i)$ with
$z_i\in\partial\Om$ and $0<r_i\leq r$ be a covering of
$B(w,r)\cap\partial\Om$. Then if
$$\sum_{i}|B_i|\geq  (2^{-Q}C_0)^{2}|B(w,r)|/4,$$
it follows that \eqref{Hr1} holds with
$C_1=\frac{1}{4}(2^{-Q}C_0)^{2}$. Hence, we are left with
considering only the case
\begin{equation}\label{sumbi}
\sum_{i}|B_i|<(2^{-Q}C_0)^{2}|B(w,r)|/4\ .
\end{equation}

Using  \eqref{doubling}, \eqref{1minus} and \eqref{sumbi} we can  now  estimate
\begin{eqnarray*}
|(B(w,\tfrac{r}{2})\cap\Om)\setminus \bigcup_{i}2B_i|
&\geq& |B(w,\tfrac{r}{2})\cap\Om|-  2^{Q}C_0^{-1}\sum_i |B_i|\\
&\geq& 2^{-Q}C_0|B(w,r)|/2- 2^{-Q}C_0|B(w,r)|/4\\
&=&2^{-Q}C_0|B(w,r)|/4\ .
\end{eqnarray*}

Thus by a covering lemma (see \cite{St1}, page 9) we can find a
sequence of pairwise disjoint balls $B(x_k,6\delta(x_k))$ with
$x_k\in (B(w,\tfrac{r}{2})\cap\Om)\setminus \bigcup_{i}2B_i$ such that
\begin{equation*}
|B(w,r)|\leq C|(B(w,\tfrac{r}{2})\cap\Om)\setminus
\bigcup_{i}2B_i|\leq C \sum_k |B(x_k,30\delta(x_k))|\ .
\end{equation*}

This together with \eqref{doubling} and \eqref{Con12} give
\begin{eqnarray}\label{BA}
|B(w,r)|r^{-q}&\leq& C \sum_k |B(x_k,\delta(x_k))|\delta(x_k)^{-q}\\
&\leq& C \sum_{k}\mathcal{\widetilde{H}}^{-q}_{\delta(x_k)}(\overline{B}(x_k,2\delta(x_k))\cap\partial\Om)\nonumber
\end{eqnarray}
since $\delta(x_k)<\tfrac{r}{2}$ for all $k$.

We next observe that we can further  assume that
\begin{equation}\label{delta}
\delta(x)<\tfrac{r}{4} {\rm ~for~ all~ } x\in
B(w,\tfrac{r}{2})\cap\Om\ .
\end{equation}

In fact, if there exits $x\in B(w,\tfrac{r}{2})\cap\Om$ such that
$\delta(x)\geq  \tfrac{r}{4}$, then there exists $x_0\in
B(w,\tfrac{r}{2})\cap\Om$ such that  $\delta(x_0)= \tfrac{r}{4}$ by
the continuity of $\delta$. Thus $B(x_0, 2\delta(x_0))\subset
B(w,r)$, and in view of assumption \eqref{Con12} we obtain
\begin{eqnarray*}
\mathcal{\widetilde{H}}_{r}^{-q}(B(w,r)\cap\partial\Om)&\geq& C \mathcal{\widetilde{H}}_{\delta(x_0)}
^{-q}(B(x_0,2\delta(x_0)\cap\partial\Om))\\
&\geq& C \delta(x_0)^{-q}|B(x_0, \delta(x_0))|\geq C r^{-q}|B(w,r)|\
,
\end{eqnarray*}
which gives \eqref{Hr1}. Now, inequality \eqref{delta} in particular
implies that
$$\overline{B}(x_k,2\delta(x_k))\cap\partial\Om\subset B(w,r)\cap\partial\Om\subset \bigcup_i B_i\ ,$$ and
hence for every $k$ one has
\begin{equation}\label{H}
\mathcal{\widetilde{H}}_{2\delta(x_k)}^{-q}(\overline{B}(x_k,2\delta(x_k)\cap\partial\Om))
\leq \sum_{\{i\in \mathbb N\mid B_i\cap
\overline{B}(x_k,2\delta(x_k))\not=\emptyset\}} |B_i|r_i^{-q}\ .
\end{equation}

Here we have used the fact that  $r_i <2\delta(x_k)$ since
$x_k\not\in 2B_i$. From \eqref{BA} and \eqref{H}, after changing the
order of summation, we obtain
\begin{eqnarray}\label{Bleqsum}
|B(w,r)|r^{-q}&\leq& C \sum_{i}\sum_{\{k\in \mathbb N\mid B_i\cap \overline{B}(x_k,2\delta(x_k))\not=\emptyset\}} |B_i|r_i^{-q}\\
&\leq& C \sum_i C(i)|B_i|r_i^{-q}\ ,\nonumber
\end{eqnarray}
where $C(i)$ is the number of balls $\overline{B}(x_k,2\delta(x_k))$
that intersect $B_i$. Note that if $B_i\cap
\overline{B}(x_k,2\delta(x_k))\not=\emptyset$, then since  $r_i <
2\delta(x_k)$ we see that $B_i\subset B(x_k,6\delta(x_k))$. Hence
$C(i)\leq 1$ for all $i$ since by our choice the balls $B(x_k,
6\delta(x_k))$ are pairwise disjoint.  This and \eqref{Bleqsum} give
$$|B(w,r)|r^{-q}\leq C \sum_i |B_i|r_i^{-q}$$
and inequality \eqref{Hr1} follows  as the coverings $\{B_i\}_i$ of
$B(w,r)\cap\partial\Om$ are arbitrary. This completes the proof of
the theorem.

\end{proof}

The thickness condition \eqref{Hr} that involves the Hausdorff
content will now be shown to imply the uniform $(X,p)$-fatness of
$\RR^n\setminus\Om$. To achieve this we borrow an idea from
\cite{HK}.

\begin{theorem}\label{T3}
Let $\Om\subset\RR^n$ be a bounded domain with local parameters
$C_0$ and $R_0$.  Suppose that there exist $r_0\leq R_0/100$, $1<q<
p$ and a constant $C>0$ such that the inequality
\begin{equation}\label{Hr2}
\mathcal{\widetilde{H}}^{-q}_{r}(B(w,r)\cap (\RR^n\setminus\Om))\geq C r^{-q}|B(w,r)|
\end{equation}
holds for all $w\in\partial\Om$ and $0<r<r_0$. Then, there exists
$C_1>0$ such that the  $\RR^n\setminus\Om$ is uniformly $(X,p)$-fat
with constants $C_1$ and $r_0$.
\end{theorem}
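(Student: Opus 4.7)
The plan is to mimic the strategy of the proof of Theorem \ref{qthick}, replacing the equilibrium measure of a fat set by a Frostman measure produced directly from the Hausdorff-content thickness assumption \eqref{Hr2}.

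Fix $w \in \partial\Om$ and $0 < r < r_0$; set $B = B(w, r)$ and $E = \overline B \cap (\RR^n \setminus \Om)$. A standard Frostman-type construction in doubling metric spaces (as in \cite{HK}), combined with \eqref{Hr2} and the doubling property \eqref{doubling}, produces a nonnegative Radon measure $\mu$ supported on $E$ satisfying
\[
\mu(E) \geq c\, r^{-q}|B(w, r)|, \qquad \mu(B(x, t)) \leq t^{-q}|B(x, t)| \text{ for all } x \in \RR^n,\ 0 < t \leq 2r.
\]
The growth inequality immediately yields a uniform bound on the Wolff potential: for every $x \in 2B$,
\[
{\rm\bf W}_p^{2r}\mu(x) \leq \int_0^{2r} t^{(p-q)/(p-1)}\, \frac{dt}{t} \leq C\, r^{(p-q)/(p-1)},
\]
the integral converging at $0$ precisely because $p > q > 1$.

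In particular $\mu$ lies in the dual of $S^{1,p}_0(2B)$, so (as in the proof of Theorem \ref{qthick}) I would solve the Dirichlet problem $-\mathcal{L}_p[u] = \mu$ in $2B$ for $u \in S^{1,p}_0(2B)$. Arguing exactly as there---combining the upper estimate of Theorem \ref{wes} with a Chebyshev/capacity bound on $\inf_{B(x, r/2)} u$ obtained by testing the equation against $\min(u, k)$---one reaches
\[
\sup_{x \in E} u(x) \leq C\, r^{(p-q)/(p-1)}.
\]
Testing the equation against $u$ itself then gives $\|Xu\|_p^p = \int u\, d\mu \leq C\, r^{(p-q)/(p-1)} \mu(E)$. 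For any admissible $\varphi \in C^{0,1}_d(\RR^n)$ for ${\rm cap}_p(E, 2B)$, H\"older's inequality and the equation yield
\[
\mu(E) \leq \int \varphi\, d\mu = \int |Xu|^{p-2} Xu \cdot X\varphi\, dx \leq \|Xu\|_p^{p-1}\, \|X\varphi\|_p,
\]
so, raising to the $p$-th power and using the bound on $\|Xu\|_p^p$,
\[
\mu(E)^p \leq C^{p-1}\, r^{p-q}\, \mu(E)^{p-1}\, \|X\varphi\|_p^p.
\]
After cancelling $\mu(E)^{p-1}$ and taking the infimum over $\varphi$, one concludes ${\rm cap}_p(E, 2B) \geq c\, r^{-(p-q)}\, \mu(E) \geq C_1\, r^{-p}\, |B(w, r)|$, which, in view of \eqref{anular}, is the desired uniform $(X, p)$-fatness with constants depending only on $C_0$, $p$, and the constant in \eqref{Hr2}.

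The main obstacle is the first step, namely establishing the Frostman measure with the growth estimate valid at \emph{all} centers $x \in \RR^n$ (rather than merely at centers in $E$, which is what the definition of $\mathcal{\widetilde{H}}^{-q}_r$ directly controls). One overcomes this by using the doubling property to compare $\mu(B(x, t))$ with $\mu(B(y, 2t))$ for the nearest $y \in E$. A secondary technical point is justifying, via the growth bound with $q < p$, that $\mu$ is absolutely continuous with respect to the $p$-capacity; this is needed to legitimately test the PDE $-\mathcal L_p[u] = \mu$ against arbitrary $\varphi \in S^{1,p}_0(2B) \cap C^{0,1}_d(\RR^n)$ and to identify the measure pairing with the distributional one.
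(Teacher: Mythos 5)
Your argument is sound and it reaches the conclusion, but it takes a genuinely different route from the paper. The paper's proof is a direct capacitary estimate: fix an admissible test function $\varphi \in C^\infty_0(B(z,2r))$ with $\varphi \ge 1$ on $K$, and split into two cases. If some $x_0 \in K$ has $|\varphi(x_0) - \varphi_{B(x_0,4r)}| \le 1/2$, then $|\varphi_{B(x_0,4r)}| \ge 1/2$, and the Maz'ya-type Sobolev inequality of Lemma \ref{SobCap} together with \eqref{doubling} and \eqref{anular} gives $\int_{B(z,2r)}|X\varphi|^p\,dx \gtrsim r^{-p}|B(z,r)|$ immediately. Otherwise $|\varphi(x) - \varphi_{B(x,4r)}| > 1/2$ for every $x \in K$; a telescoping/Poincar\'e estimate (Theorem \ref{T:jer}) then selects for each $x$ a scale $\rho(x) \le 4r$ at which $|X\varphi|$ is large, and the $5r$-covering argument of Theorem 5.9 in \cite{HK} combined with the Hausdorff content lower bound \eqref{Hr2} sums these contributions to the same conclusion. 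No measure is constructed, no nonlinear PDE is solved.

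Your route instead constructs a Frostman measure $\mu$ on $E = \overline B(w,r)\cap(\RR^n\setminus\Om)$, bounds its Wolff potential using $q < p$, solves $-\mathcal L_p[u]=\mu$ in $S^{1,p}_0(2B)$, derives the $L^\infty$ bound $u \lesssim r^{(p-q)/(p-1)}$ on $E$ exactly as in the proof of Theorem \ref{qthick}, and dualizes against capacitary test functions. This is a legitimate and structurally elegant alternative: it reuses the machinery already deployed for the self-improvement theorem, so the two proofs (of self-improvement of fatness, and of content-to-fatness) become instances of one template. The price you pay is precisely what you flagged yourself: you must have a Frostman lemma adapted to the variable gauge $h(x,t)=t^{-q}|B(x,t)|$ with centers restricted to $E$, and you must verify that $\mu$ charges no set of zero $(X,p)$-capacity so that the weak formulation of $-\mathcal L_p[u]=\mu$ can be tested against the capacitary competitors. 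Neither of these is supplied by the paper's references (the citation of \cite{HK} in the paper is for a covering lemma, not for a Frostman-type construction), so if you pursue this route you must prove the Frostman step; the absolute-continuity point follows from the finiteness of the Wolff potential, but should also be spelled out. Your arithmetic in the final dualization (raising $\mu(E) \le \|Xu\|_p^{p-1}\|X\varphi\|_p$ to the $p$-th power and cancelling $\mu(E)^{p-1}$) is correct. In short: a correct, heavier-machinery alternative to the paper's elementary covering argument, contingent on the Frostman construction you identified as the main gap.
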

\begin{proof}
Let $z\in \partial\Om$ and $0<r<r_0$. We need to find a constant $C_1>0$ independent of $z$ and $r$ such that
\begin{equation}\label{cappK}
{\rm cap}_{p}(K, B(z,2r))\geq C_1 r^{-p}|B(z,r)|\ ,
\end{equation}
where $K= (\RR^n\setminus\Om)\cap \overline{B}(z,r)$. From
\eqref{Hr2}  we have
\begin{equation}\label{Hr3}
\widetilde{\mathcal H}^{-q}_{r}(K)\geq C r^{-q}|B(z,r)|\ .
\end{equation}

Let $\varphi\in C_0^\infty(B(z,2r))$ be such that $\varphi\geq 1$ on $K$. If there is $x_0\in K$ such that
$$|\varphi(x_0)-\varphi_{B(x_0,4r)}|\leq 1/2\ ,$$ then
$$1\leq\varphi(x_0)\leq |\varphi(x_0)-\varphi_{B(x_0,4r)}|+|\varphi_{B(x_0,4r)}|\leq 1/2+ |\varphi_{B(x_0,4r)}|\ .$$

By Lemma \ref{SobCap}, the  doubling property \eqref{doubling} and
\eqref{anular} we obtain
$$1/2\leq |\varphi_{B(x_0,4r)}|\leq C \Big(r^{p}|B(z,r)|^{-1}\int_{B(z,2r)}|X\varphi|^{p}dx\Big)^{\frac{1}{p}}\ ,$$
which gives \eqref{cappK}. Thus we may assume that
\begin{equation*}
1/2 <|\varphi(x)-\varphi_{B(x,4r)}| \quad {\rm for~ all~} x\in K.
\end{equation*}

Under such assumption, using the covering argument in Theorem 5.9 in
\cite{HK}, the inequality \eqref{cappK} follows from \eqref{Hr3} and
from Theorem \ref{T:jer}.

\end{proof}

Finally, we summarize in one single theorem the results obtained in
Theorems \ref{PH}, \ref{T1}, \ref{T2} and \ref{T3}.

\begin{theorem}\label{summa}
Let $\Om\subset\RR^n$ be a bounded domain with local parameters
$C_0$ and $R_0$ and let $1<p<\infty$. There exists $0<r_0\leq
R_0/100$ such that  the following statements are equivalent:

{\rm (i)} The set $\RR^n\setminus\Om$ is uniformly $(X,p)$-fat with
constants $c_0$ and $r_0$ for some $c_0>0$. That is,
\begin{equation*}
{\rm cap}_{p}((\RR^n\setminus\Om)\cap \overline{B}(w,r),
B(w,2r))\geq c_0 r^{-p}|B(w,r)|
\end{equation*}
for all $w\in \partial\Om $ and $0<r<r_0$.

{\rm (ii)} There  exist  $1< q<p $ and a constant $C>0$ such that
\begin{equation*}
|u(x)|\leq C \delta(x)\Big(\mathcal{M}_{4\delta(x)}(|\nabla
u|^q)(x)\Big)^{\frac{1}{q}}
\end{equation*}
 for all $x\in\Om$ with $\delta(x)<r_0$ and all compactly supported $u\in C^{0,1}_d(\Om)$.

{\rm (iii)} There  exist  $1< q<p $ and  a constant $C>0$ such that
\begin{equation*}
\mathcal{\widetilde{H}}^{-q}_{\delta(x)}(\overline{B}(x,
2\delta(x))\cap\partial\Om)\geq C\delta(x)^{-q}|B(x, \delta(x))|
\end{equation*}
 for all $x\in\Om$ with $\delta(x)<r_0$.

{\rm(iv)} There  exist  $1< q<p $ and  a constant $C>0$ such that
\begin{equation*}
\mathcal{\widetilde{H}}^{-q}_{r}(B(w,r)\cap (\RR^n\setminus\Om))\geq
C r^{-q}|B(w,r)|
\end{equation*}
for all $w\in\partial\Om$ and $0<r<r_0$.
\end{theorem}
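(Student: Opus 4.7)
The strategy is to establish the equivalence by proving the cyclic chain of implications $(\text{i})\Rightarrow(\text{ii})\Rightarrow(\text{iii})\Rightarrow(\text{iv})\Rightarrow(\text{i})$, each of which has already been carried out in the preceding theorems; the role of the summary statement is essentially to collect them into a single equivalence and to reconcile the various thresholds $r_0$ and exponents $q$ so that one can choose parameters uniformly.

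First I would fix $r_0\leq R_0/100$ to be the minimum of the values produced by Theorems \ref{qthick}, \ref{PH}, \ref{T1}, \ref{T2}, and \ref{T3}. Then $(\text{i})\Rightarrow(\text{ii})$ is precisely the content of Theorem \ref{PH}, which uses the self-improvement in Theorem \ref{qthick} to pass from $p$-fatness to $q$-fatness for some $1<q<p$, and then combines the capacitary Sobolev estimate Lemma \ref{SobCap} with the representation formula of \cite{CDG} and a standard dyadic estimate of the fractional integral by the maximal function. The implication $(\text{ii})\Rightarrow(\text{iii})$ is Theorem \ref{T1}, proved by a contradiction argument: given a point where the Hausdorff content is small one constructs a Lipschitz test function $u_k$ with $u_k(x_k)=1$ and $\delta(x_k)^q\mathcal{M}_{4\delta(x_k)}(|Xu_k|^q)(x_k)\to 0$, violating \eqref{pwh}.

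The implication $(\text{iii})\Rightarrow(\text{iv})$ is Theorem \ref{T2}: if a boundary ball $B(w,r)$ has most of its volume outside $\Om$, then $(\text{iv})$ is immediate from the doubling property; otherwise one uses a Vitali-type covering lemma inside $B(w,r/2)\cap\Om$ to produce disjoint balls $B(x_k,6\delta(x_k))$ to which $(\text{iii})$ applies, and then one interchanges summations using the fact that any cover of $B(w,r)\cap\p\Om$ by small balls also covers each $\overline{B}(x_k,2\delta(x_k))\cap\p\Om$. Finally, $(\text{iv})\Rightarrow(\text{i})$ is Theorem \ref{T3}, where for a test function $\varphi$ admissible for the $p$-capacity one distinguishes two cases depending on whether there exists $x_0\in K$ for which $|\varphi(x_0)-\varphi_{B(x_0,4r)}|\leq 1/2$: the first case is handled directly by Lemma \ref{SobCap}, and the second by the Haj{\l}asz--Koskela covering argument of \cite{HK} together with the Poincar\'e inequality Theorem \ref{T:jer}.

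The main subtlety, and the only thing that is not just a citation, is the bookkeeping of exponents. The implication $(\text{i})\Rightarrow(\text{ii})$ produces some $q_1\in(1,p)$; $(\text{ii})\Rightarrow(\text{iii})\Rightarrow(\text{iv})$ preserve the value of $q$; but $(\text{iv})\Rightarrow(\text{i})$ requires only that $q<p$ and yields $p$-fatness, so the loop closes consistently. In the statement of (ii), (iii), (iv) the existential quantifier on $q\in(1,p)$ makes this compatible: starting from (i) one obtains (ii), (iii), (iv) with some $1<q<p$, and conversely any witness $q\in(1,p)$ in (iv) already gives back (i). No other step is delicate; each component is proved in detail above.
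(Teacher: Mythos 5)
Your proposal is correct and matches the paper exactly: the paper explicitly presents Theorem \ref{summa} as a summary statement and proves it by the same cyclic chain (i)$\Rightarrow$(ii)$\Rightarrow$(iii)$\Rightarrow$(iv)$\Rightarrow$(i) via Theorems \ref{PH}, \ref{T1}, \ref{T2}, \ref{T3}, with the only work being the bookkeeping of $r_0$ and $q$ that you describe. Nothing further is needed.
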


\begin{remark}\label{R:lehr}
As an example in \cite{Lehr} shows, we cannot replace the set
$\Rn\setminus \Om$ in the statement $\rm(iv)$ in Theorem \ref{summa}
with the smaller set $\p \Om$.
\end{remark}

\section{ Hardy Inequalities on bounded domains}\label{S:hbd}

Our first result in this section is the following Hardy inequality which is a consequence of Theorem \ref{PH} and the
$L^s$ boundedness of the Hardy-Littlewood maximal function for $s>1$. We remark that no assumption on the
smallness of the diameter of the domain is required  as opposed to  Poicar\'e's inequality \eqref{poincare} and Sobolev's
inequalities established in \cite{GN1}.

\begin{theorem}\label{HI}Let $\Om\subset\RR^n$ be a bounded domain with local parameters $C_0$ and $R_0$.
Suppose that $\RR^n\setminus \Om$ is uniformly $(X, p)$-fat with
constants $c_0>0$ and $0<r_0\leq R_0/100$. There is a constant $C>0$
such that for all $\varphi\in C^{\infty}_{0}(\Om)$
\begin{equation}\label{hardy}
\int_{\Om}\frac{|\varphi(x)|^{p}}{\delta(x)^{p}}\ dx\ \leq\ C\
\int_{\Om}|X\varphi|^{p}\ dx\ .
\end{equation}
\end{theorem}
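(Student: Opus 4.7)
The plan is to combine the pointwise Hardy inequality from Theorem \ref{PH} with the $L^s$-boundedness of the Hardy-Littlewood maximal function on the space of homogeneous type $(\RR^n, d, dx)$, handling the ``far from the boundary'' part separately via the global Poincar\'e inequality \eqref{poi}.

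\textbf{Step 1: Splitting.} Given $\varphi\in C_0^\infty(\Om)$, write
\[
\int_\Om \frac{|\varphi|^p}{\delta^p}\,dx \;=\; \int_{\{\delta(x)<r_0\}} \frac{|\varphi|^p}{\delta^p}\,dx \;+\; \int_{\{\delta(x)\geq r_0\}} \frac{|\varphi|^p}{\delta^p}\,dx \;=:\; I \;+\; II\ .
\]
Since $\varphi\in C_0^\infty(\Om)\subset C^{0,1}_d(\Om)$ is compactly supported, Theorem \ref{PH} applies.

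\textbf{Step 2: Estimating $I$.} By Theorem \ref{PH} there exists $1<q<p$ such that, for every $x$ with $\delta(x)<r_0$,
\[
\frac{|\varphi(x)|^p}{\delta(x)^p}\;\leq\; C\,\bigl(\mathcal{M}_{4\delta(x)}(|X\varphi|^q)(x)\bigr)^{p/q}\ .
\]
Let $f=|X\varphi|^q\,\chi_\Om\in L^{p/q}(\RR^n)$ and let $\mathcal{M}$ denote the uncentered Hardy-Littlewood maximal operator on $\RR^n$ with respect to the CC balls. Then $\mathcal{M}_{4\delta(x)}(|X\varphi|^q)(x)\leq C\,\mathcal{M}f(x)$ for all $x\in\Om$. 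Since $(\RR^n,d,dx)$ is (locally) a space of homogeneous type by \eqref{doubling}, and since $p/q>1$, the standard maximal theorem gives
\[
\int_{\RR^n}(\mathcal{M}f)^{p/q}\,dx\;\leq\;C\int_{\RR^n}f^{p/q}\,dx\;=\;C\int_\Om |X\varphi|^p\,dx\ .
\]
(A minor point: for the global $L^{p/q}$ bound one works with a compact set containing $\overline \Om$ together with the supports of $4\delta(x)$-enlargements; the support of $\varphi$ being fixed and $\Om$ being bounded, everything takes place in a fixed compact subset of $\RR^n$, where the doubling constant is uniform.) Consequently,
\[
I\;\leq\; C\int_\Om |X\varphi|^p\,dx\ .
\]

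\textbf{Step 3: Estimating $II$.} On the set $\{\delta(x)\geq r_0\}$ we have $\delta(x)^{-p}\leq r_0^{-p}$, hence by the Poincar\'e inequality \eqref{poi},
\[
II\;\leq\; r_0^{-p}\int_\Om |\varphi|^p\,dx\;\leq\; r_0^{-p}\,C(\Om)\int_\Om|X\varphi|^p\,dx\ .
\]
Combining Steps 2 and 3 yields \eqref{hardy}.

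\textbf{Main obstacle.} The only non-routine point is ensuring that the maximal function estimate used in Step 2 is genuinely global on the support of $\varphi$ with a constant independent of $\varphi$; this is where one invokes that $\Om$ is (Euclidean) bounded together with \eqref{doubling}, so that all balls $B(x,4\delta(x))$ with $x\in\Om$ lie in a fixed compact set for which the local parameters $C_0,R_0$ from \eqref{doubling} govern the doubling constant uniformly. With this in hand the Coifman-Weiss maximal theorem applies and the argument closes cleanly.
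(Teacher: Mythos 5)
Your proposal is correct and follows essentially the same route as the paper: split $\Om$ into the region $\{\delta\geq r_0\}$ (handled via the Poincar\'e inequality \eqref{poi}) and $\{\delta<r_0\}$ (handled via Theorem \ref{PH} together with the $L^{p/q}$-boundedness of the maximal function, using that $p/q>1$). Your Step 2 spells out the uniformity of the doubling constant on a fixed compact set a little more explicitly than the paper, but the underlying argument is identical.
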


\begin{proof}
Let $\Om_{r_0}=\{ x\in\Om: \delta(x)\geq r_0\}$ and let  $\varphi\in C^{\infty}_{0}(\Om)$.
By Theorem \ref{PH} we can find $1<q<p$ such that
\begin{eqnarray*}
\int_{\Om}|\varphi(x)|^{p}\delta(x)^{-p}dx
&=& \int_{\Om_{r_0}}|\varphi(x)|^{p}\delta(x)^{-p}dx+
\int_{\Om\setminus\Om_{r_0}}|\varphi(x)|^{p}\delta(x)^{-p}dx\\
&\leq& r_{0}^{-p}\int_{\Om}|\varphi(x)|^{p}dx+
C \int_{\Om}\Big(\mathcal{M}_{4r_0}(|X\varphi|^{q})(x)\Big)^{\frac{p}{q}}dx\\
&\leq& C \int_{\Om}|X\varphi(x)|^{p}dx\ .
\end{eqnarray*}

In the last inequality above we have used the Poincar\'e inequality
\eqref{poi} and the boundedness property of $\mathcal{M}_{4r_0}$ on
$L^{s}(\Om)$, $s>1$, (see \cite{St1}). The proof of Theorem \ref{HI}
is then complete.

\end{proof}

To state Theorems \ref{GP} and \ref{FPT}  below, we need to fix a
Whitney decomposition of $\Om$ into balls as in the following lemma,
whose construction can be found for example in  \cite{J} or
\cite{FS}.

\begin{lemma}\label{Whitney} Let $\Om\subset\RR^n$ be a bounded domain with local parameters $C_0$ and $R_0$. There exists a family of balls ${\mathcal W}=\{B_{j}\}$ with
$B_j=B(x_j,r_j)$ and a constant $M>0$   such that
\begin{eqnarray*}
&{\rm (a)}& \Om\subset \cup_{j}B_{j},\\
&{\rm (b)}& B(x_j,\tfrac{r_j}{4})\cap B(x_k,\tfrac{r_k}{4})\not=\varnothing\quad {\rm for~} j\not=k, \\
&{\rm (c)}&  r_j=10^{-3}\min\{R_0/{\rm diam}(\Om),1\} {\rm dist}(B_{j},\partial\Om),\\
&{\rm (d)}& \sum_{j}\chi_{4B_j}(x)\leq M \chi_{\Om}(x).
\end{eqnarray*}
In ${\rm (c)}$,  $${\rm diam}(\Om)=\sup_{x,y\in\Om}d(x,y)$$ is the
diameter of $\Om$ with respect to the CC metric. In particular we have $r_j\leq 10^{-3}R_0$.
\end{lemma}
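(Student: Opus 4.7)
Set $\alpha := 10^{-3}\min\{R_0/{\rm diam}(\Om), 1\}$ and, for each $x\in\Om$, put $r(x) := \frac{\alpha}{1+\alpha}\delta(x)$, where $\delta(x) = \inf\{d(x,y): y\in\partial\Om\}$. A direct calculation gives ${\rm dist}(B(x,r(x)),\partial\Om) = \delta(x) - r(x) = r(x)/\alpha$, so property (c) will hold automatically once the centers $x_j$ are chosen in $\Om$; in particular $r(x) \le 10^{-3}R_0$, so the doubling estimate \eqref{doubling} is at our disposal on every ball that will appear in the argument.

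I would then extract the centers by a greedy/Zorn maximality argument: pick $\{x_j\}\subset\Om$ (at most countable, since $\Om$ has finite Lebesgue measure and each quarter-ball has a uniform lower bound on its volume coming from \eqref{doubling}) that is maximal with respect to the property that the balls $B(x_j,r_j/4)$, $r_j := r(x_j)$, are pairwise disjoint. This gives (b) by construction and (c) by the definition of $r(x_j)$.

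For the covering (a), fix $y\in\Om$. By maximality, $B(y,r(y)/4)\cap B(x_j,r_j/4)\ne\varnothing$ for some $j$, so $d(y,x_j) < (r(y)+r_j)/4$. Combining this with the Lipschitz bound $|\delta(y)-\delta(x_j)| \le d(y,x_j)$ and the identity $\delta = \frac{1+\alpha}{\alpha}r$, a short algebraic rearrangement yields the two-sided comparison
\[
r(y) \le \frac{4+5\alpha}{4+3\alpha}\,r_j, \qquad d(y,x_j) < \frac{2(1+\alpha)}{4+3\alpha}\,r_j,
\]
and since $\alpha \le 10^{-3}$ the right-hand side is strictly less than $r_j$, so $y\in B_j$. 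For the bounded overlap (d), exactly the same comparison carried out under the weaker hypothesis $d(x,x_j) < 4r_j$ produces $r_j \asymp r(x)$ with constants depending only on $\alpha$, for every $j\in J(x) := \{j : x\in 4B_j\}$. Hence the pairwise disjoint quarter-balls $\{B(x_j,r_j/4) : j\in J(x)\}$ all sit inside a single dilate $B(x, C\,r(x))$, and by \eqref{doubling} each $|B(x_j,r_j/4)|$ is bounded below by a fixed fraction of $|B(x,C\,r(x))|$. Summing volumes bounds $|J(x)|$ by a constant $M = M(C_0)$, giving (d).

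\textbf{Main obstacle.} The delicate point is making the covering (a) and the quarter-ball disjointness (b) simultaneously compatible with the rigid distance constraint (c); everything rests on the quantitative comparability $r(y)\asymp r_j$ whenever $B(y,r(y)/4)$ meets $B(x_j,r_j/4)$, which in turn is bought with the smallness of $\alpha$. The doubling-based counting argument for (d) is then routine but must be carried out carefully since the radii $r_j$ are not equal, only comparable.
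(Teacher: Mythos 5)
Your construction is correct, and it is the standard Whitney-type decomposition argument in a doubling metric measure space; the paper itself does not prove the lemma but cites \cite{J} and \cite{FS}, where essentially this same greedy selection of a maximal disjoint family of quarter-balls with radius proportional to $\delta(x)$, followed by a doubling-based counting argument, is carried out. Your reduction of (c) to the identity $\delta(x)-r(x)=r(x)/\alpha$ (valid because the CC metric is a length metric, so $\mathrm{dist}(B(x,r),\partial\Om)=\delta(x)-r$ when $r<\delta(x)$), and the algebra turning the $\tfrac14$-ball intersection into $r(y)\asymp r_j$ and $d(y,x_j)<r_j$, both check out.

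A few small remarks. First, the paper's condition (b) is evidently a misprint: it should read $B(x_j,\tfrac{r_j}{4})\cap B(x_k,\tfrac{r_k}{4})=\varnothing$ for $j\neq k$, and you have (correctly) built that disjointness into the maximal selection. Second, the parenthetical claim that the quarter-balls have a \emph{uniform} lower volume bound via \eqref{doubling} is not literally true, since $r_j\to 0$ near $\partial\Om$; but countability is still immediate, because each $B(x_j,r_j/4)$ has positive Lebesgue measure and they are pairwise disjoint inside the set $\Om$ of finite measure (count, for each $n$, the finitely many with measure exceeding $1/n$). Third, it is worth recording explicitly that $4B_j\subset\Om$ (since $4r_j=\tfrac{4\alpha}{1+\alpha}\delta(x_j)<\delta(x_j)$), so that the right-hand side $M\chi_\Om$ in (d) is indeed an upper bound also for $x\notin\Om$; your overlap estimate $|J(x)|\le C_0^{-1}40^{Q}$ then gives $M$. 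Finally, in invoking \eqref{doubling} one must keep the radii below $R_0$; your normalization $\alpha\le 10^{-3}\min\{R_0/\mathrm{diam}(\Om),1\}$ guarantees $r(x)\le 10^{-3}R_0$, so the dilated balls $B(x,5r(x))$ and $B(x_j,10r_j)$ in the counting argument stay within the admissible range, as you implicitly use.
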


We can now go further in characterizing weight functions $V$ on
$\Om$ for which the embedding
\begin{equation*}
\int_{\Om}|\varphi(x)|^{p}\,V(x)dx\leq C\int_{\Om}|X\varphi|^{p}dx
\end{equation*}
holds for all   $\varphi\in C^{\infty}_{0}(\Om)$. Here the condition on $V$ is formulated in terms of a
localized capacitary condition adapted to a Whitney decomposition of $\Om$. Such a condition
can be simplified further  in the setting of Carnot groups as we point out in Remark \ref{HG} below.
In the Euclidean setting it was used in \cite{HMV} to characterize the solvability of multi-dimensional Riccati
equations on bounded domains.

\begin{theorem}\label{GP}Let $\Om\subset\RR^n$ be a bounded domain with local parameters $C_0$ and $R_0$.
Let $V\geq 0$ be in $L^{1}_{\rm loc}(\Om)$. Suppose that  $\RR^n\setminus \Om$ is uniformly $(X,p)$-fat with
$1<p<Q$.  Then the embedding
\begin{equation}\label{trace}
\int_{\Om}|\varphi(x)|^{p}\,V(x)dx\leq C\int_{\Om}|X\varphi|^{p}dx,\qquad \varphi\in C^{\infty}_{0}(\Om),
\end{equation}
holds if and only if
\begin{equation}\label{capcond}
\sup_{B\in\mathcal{W}}\sup_{\substack{ K \subset 2B\\ K\, {\rm compact}}}
\frac{\int_{K}V(x)dx}{{\rm cap}_{p}(K,\Om)}\leq C,
\end{equation}
where $\mathcal{W}=\{B_j\}$ is a Whitney decomposition of $\Om$ as
in Lemma \ref{Whitney}.
\end{theorem}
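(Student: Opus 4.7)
\medskip

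\noindent\emph{Proof plan.} The necessity direction is immediate. Given \eqref{trace} and a compact $K\subset\Om$, any admissible $\varphi\in C^{\infty}_{0}(\Om)$ with $\varphi\geq 1$ on $K$ satisfies $\int_K V\,dx\leq \int_\Om V|\varphi|^{p}dx\leq C\int_\Om |X\varphi|^{p}dx$, and taking the infimum over such $\varphi$ yields the \emph{global} capacitary bound $\int_K V\,dx\leq C\,{\rm cap}_{p}(K,\Om)$, which a fortiori implies the localized condition \eqref{capcond}. For the sufficiency, I would fix a Whitney family $\mathcal{W}=\{B_j\}$ as in Lemma \ref{Whitney}, write $\int_\Om|\varphi|^{p}V\,dx\leq \sum_j\int_{B_j}|\varphi|^{p}V\,dx$, and on each $B_j$ decompose
\[
|\varphi|^{p}\leq C_p\bigl(|\varphi-\varphi_{2B_j}|^{p}+|\varphi_{2B_j}|^{p}\bigr),
\]
splitting the estimate into an ``oscillation'' sum and an ``average'' sum.

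The pivotal observation for the oscillation sum is that since $2B_j\subset\Om$, every function admissible for ${\rm cap}_{p}(K,2B_j)$ is also admissible for ${\rm cap}_{p}(K,\Om)$, hence ${\rm cap}_{p}(K,\Om)\leq{\rm cap}_{p}(K,2B_j)$ for every compact $K\subset 2B_j$. Consequently \eqref{capcond} implies the Maz'ya capacitary condition on the ball $2B_j$, uniformly in $j$, and the subelliptic Maz'ya theorem of \cite{D2} produces a local trace inequality
\[
\int_{2B_j}|\psi|^{p}V\,dx\ \leq\ C\int_{2B_j}|X\psi|^{p}dx,\qquad \psi\in C^{\infty}_{0}(2B_j),
\]
with a constant independent of $j$. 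I would apply this with $\psi_j=\eta_j(\varphi-\varphi_{2B_j})$, where $\eta_j$ is the cutoff from Theorem \ref{T:cutoff} satisfying $\eta_j\equiv 1$ on $B_j$, $\mathrm{supp}\,\eta_j\subset 2B_j$, and $|X\eta_j|\leq C/r_j$. The product rule together with Jerison's Poincar\'e inequality (Theorem \ref{T:jer}) on $2B_j$ collapses the right-hand side to $C\int_{2B_j}|X\varphi|^{p}dx$, and summing in $j$ via the bounded-overlap property (d) of Lemma \ref{Whitney} yields $\sum_j\int_{B_j}|\varphi-\varphi_{2B_j}|^{p}V\,dx\leq C\int_\Om|X\varphi|^{p}dx$.

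For the average sum, I would first apply \eqref{capcond} to the compact set $K=\overline{B}_j\subset 2B_j$, then use capacity monotonicity ${\rm cap}_{p}(\overline{B}_j,\Om)\leq{\rm cap}_{p}(\overline{B}_j,2B_j)$ together with the upper bound in \eqref{anular} to obtain $\int_{B_j}V\,dx\leq C|B_j|r_j^{-p}$. Jensen's inequality gives $|\varphi_{2B_j}|^{p}\leq C|B_j|^{-1}\int_{2B_j}|\varphi|^{p}dx$, and by property (c) of Lemma \ref{Whitney} one has $\delta(x)\approx r_j$ throughout $2B_j$. Combining these and summing in $j$ through bounded overlap produces
\[
\sum_j |\varphi_{2B_j}|^{p}\int_{B_j}V\,dx\ \leq\ C\int_\Om\frac{|\varphi|^{p}}{\delta(x)^{p}}\,dx,
\]
and at precisely this point the uniform $(X,p)$-fatness of $\Rn\setminus\Om$ enters through the Hardy inequality of Theorem \ref{HI}, bounding the right-hand side by $C\int_\Om|X\varphi|^{p}dx$. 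Adding the oscillation and average estimates gives \eqref{trace}.

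The principal technical obstacle is the step of translating the localized hypothesis \eqref{capcond} into a genuine Maz'ya capacity condition on each domain $2B_j$ with a constant independent of $j$; once that is resolved via the elementary monotonicity of ${\rm cap}_p$ in the ambient set, everything else is a Whitney-type patching argument in which the Hardy inequality supplies the only ingredient that genuinely uses the fatness hypothesis.
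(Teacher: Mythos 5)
Your proposal is correct and reaches the same conclusion through a genuinely different decomposition. The paper introduces a Lipschitz partition of unity $\{\phi_j\}$ subordinate to the Whitney cover, writes $|\varphi|^p \leq C\sum_j|\phi_j\varphi|^p$, applies the (localized) Maz'ya trace theorem from \cite{D2} to each $\phi_j\varphi$ directly with the ambient capacity ${\rm cap}_p(\cdot,\Om)$ (which is licit because $\mathrm{supp}(\phi_j\varphi)\subset 2B_j$, so only compacta inside $2B_j$ appear in the Maz'ya machinery), and then expands $|X(\phi_j\varphi)|^p$ by the product rule. The resulting term $\sum_j r_j^{-p}\int_{4B_j}|\varphi|^p$ is absorbed in a single stroke by the Hardy inequality of Theorem \ref{HI}, with no need for Jerison's Poincar\'e inequality or an explicit estimate on $\int_{B_j}V$. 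You instead split $|\varphi|^p$ into oscillation and mean, which forces two separate estimates: Jerison's Poincar\'e to handle the ``derivative of cutoff'' term in the oscillation part, and the capacity lower bound $\int_{B_j}V\leq C|B_j|r_j^{-p}$ (derived from \eqref{capcond} together with the annular estimate \eqref{anular}) to handle the average part, after which the Hardy inequality enters as in the paper. Your explicit use of the monotonicity ${\rm cap}_p(K,\Om)\leq{\rm cap}_p(K,2B_j)$ to transfer the hypothesis to a genuine Maz'ya condition on the subdomain $2B_j$ is a clean way to justify the same localization that the paper leaves implicit. Both routes are valid and of comparable length; the paper's partition-of-unity version is slightly more economical because it avoids Poincar\'e and the separate $V$-mass bound, while yours has the virtue of isolating more explicitly where each ingredient of the hypothesis is consumed.
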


\begin{remark} \label{HG}  In the setting of a
Carnot group $\bf G$ with homogeneous dimension $Q$, we can replace
${\rm cap}_{p}(K,\Om)$ by ${\rm cap}_{p}(K,\bf G)$ in
$\eqref{capcond}$ since if $B\in\mathcal{W}$ and $K$ is a compact
set in $2B$ we have
\begin{equation}\label{cpcapac}
c\ {\rm cap}_{p}(K,\Om)\ \leq {\rm cap}_{p}(K,{\bf G})\ \leq\  {\rm cap}_{p}(K,\Om)\ .
\end{equation}
The second inequality in \eqref{cpcapac} is obvious. To see the first one, let $\varphi\in C^{\infty}_{0}({\bf G})$,
$\varphi\geq 1$ on $K$,
and choose a cut-off function $\eta\in C^{\infty}_{0}(4B)$ such that $0\leq \eta\leq 1$, $\eta\equiv 1$ on $2B$ and
$|X\eta|\leq \frac{C}{r_B}$, where $r_B$ is the radius of $B$.  Since $\varphi\eta\in C^{\infty}_{0}(\Om)$,
$\varphi\eta\geq 1$ on $K$, we have
\begin{eqnarray*}
{\rm cap}_{p}(K,\Om)&\leq& \int_{\Om} |X (\varphi\eta)|^{p}dg\\
&\leq& \int_{{\bf G}} |X\varphi|^{p}dg + C\int_{{4B\setminus 2B}} \frac{|\varphi|^{p}}{r_B^{p}}dg\\
&\leq& \int_{{\bf G}} |X\varphi|^{p}dg + C\int_{{\bf G}}
\frac{|\varphi|^{p}}{\rho(g,g_0)^p}dg,
\end{eqnarray*}
where $g_0$ is the center of $B$, and we have denoted by
$\rho(g,g_0)$ the pseudo-distance induced  on $\bf G$ by the
anisotropic Folland-Stein gauge, see \cite{FS}, \cite{F2}. To bound
the third integral in the right-hand side of the latter inequality
we use the following Hardy type inequality
\begin{equation}\label{pHG}
\int_{\bf G}\frac{\varphi^{p}}{\rho(g,g_0)^{p}}\ dg\ \leq\ C\
\int_{\bf G} |X\varphi|^{p}\ dg, \qquad \varphi\in
C^{\infty}_{0}(\bf G),
\end{equation}
which is easily proved as follows. Recall the Folland-Stein Sobolev
embedding, see \cite{F2}, \begin{equation}\label{fs} \left(\int_{\bf
G} |\varphi|^{\frac{pQ}{Q-p}}\ dg\right)^{\frac{Q-p}{pQ}} \leq S_{p}
\left(\int_{\bf G} |X\varphi|^p \ dg\right)^{\frac{1}{p}}\ ,\ \ \
\varphi\in C^\infty_0(\bf G)\ .
\end{equation}
Observing that for every $g_0\in \bf G$ one has  $g\to
\frac{1}{\rho(g,g_0)^{p}}\in L^{Q/p,\infty}(\bf G)$, from the
generalized H\"older's inequality for weak $L^{p}$ spaces due to R.
Hunt \cite{Hu} one obtains with an absolute constant $B>0$
\begin{align*}
\int_{\bf G}\frac{\varphi^{p}}{\rho(g,g_0)^{p}}\ dg\ & \leq\ B
\left(\int_{\bf G} |\varphi|^{\frac{pQ}{Q-p}}\
dg\right)^{\frac{Q-p}{Q}}
||\rho(\cdot,g_0)^{-p}||_{L^{Q/p,\infty}(\bf G)}
\\
& \leq\ C\ \int_{\bf G} |X\varphi|^{p}\ dg \ ,
\end{align*}
where in the last inequality we have used \eqref{fs}. This proves
\eqref{pHG}. In conclusion we find
$${\rm cap}_{p}(K,\Om)\leq C \int_{{\bf G}} |X \varphi|^{p}dg,$$
which gives the first inequality in \eqref{cpcapac}.

\end{remark}

\begin{proof}[Proof of Theorem \ref{GP}]
That the emdedding \eqref{trace} implies the capacitary condition \eqref{capcond} is clear.
To prove the converse, we let $\{\phi_{j}\}$ be a Lipschitz  partition of unity
associated with the Whitney decomposition $\mathcal{W}=\{B_j\}$ (see \cite{GN2}). That is,
$0\leq \phi_j\leq 1$ is Lipschitz with respect to the CC metric, $ supp \ \phi_j\Subset 2B_j$,
$|X\phi_{j}|\leq C/{\rm diam}(B_j)$, and $$\sum_{j}\phi_j(x)=\chi_{\Om}(x).$$
Moreover, by property (d) in Lemma \ref{Whitney}, there is a constant $C(p)$ such that
$$\left(\sum_{j}\phi_j(x)\right)^{p}=C(p)\sum_{j}\phi_j(x)^p.$$
Then for any $\varphi\in C^{\infty}_{0}(\Om)$, we have
\begin{eqnarray*}
\int_{\Om}|\varphi(x)|^{p}\,V(x)dx &\leq& C\ \sum_{j}\int_{\Om}|\phi_j\varphi(x)|^{p}\,V(x)dx \\
&\leq& C\ \sum_{j}\int_{4B_j}|X(\phi_j\varphi)|^{p}dx
\end{eqnarray*}
by \eqref{capcond} and Theorem 5.3 in \cite{D2}. Thus from Theorem \ref{HI} and Lemma \ref{Whitney}, we obtain
\begin{eqnarray*}
\lefteqn{\int_{\Om}|\varphi(x)|^{p}\,V(x)dx}\\
&\leq& C\ \sum_{j}\int_{4B_j}|X\varphi|^{p}dx + C\sum_{j}[{\rm diam}(B_j)]^{-p}\int_{4B_j}|\varphi|^p dx\\
&\leq& C \int_{\Om}|X\varphi|^{p}dx + C\int_{\Om}|\varphi|^p \delta^{-p}(x) dx\\
&\leq& C \int_{\Om}|X\varphi|^{p}dx.
\end{eqnarray*}
This completes the proof of the theorem.

\end{proof}

In view of Theorem 1.6 in \cite{D2}, the above proof also gives the following Fefferman-Phong  type sufficiency result \cite{Fef}.

\begin{theorem}\label{FPT}Let $\Om\subset\RR^n$ be a bounded domain with local parameters $C_0$ and $R_0$.
Let $V\geq 0$ be in $L^{1}_{\rm loc}(\Om)$. Suppose that  $\RR^n\setminus \Om$ is uniformly $(X, p)$-fat with
$1<p<Q$.  Then the embedding
\begin{equation}\label{trace2}
\int_{\Om}|\varphi(x)|^{p}\,V(x)dx\leq C\int_{\Om}|X\varphi|^{p}dx,\qquad \varphi\in C^{\infty}_{0}(\Om),
\end{equation}
holds if, for some $s>1$, $V$ satisfies the following localized
Fefferman-Phong type condition adapted to $\Om$:
\begin{equation}\label{FP}
\sup_{B\in\mathcal{W}}\sup_{\substack{x\in 2B\\0<r<{\rm diam}(B)}}
\int_{B(x,r)}V(y)^{s}dy\ \leq\ C\ \frac{|B(x,r)|}{r^{sp}}
\end{equation}
 where $\mathcal{W}=\{B_j\}$ is a Whitney
decomposition of $\Om$ as in Lemma \ref{Whitney}.
\end{theorem}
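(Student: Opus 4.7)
The plan is to repeat almost verbatim the argument used to prove Theorem \ref{GP}, with the single modification that the local capacitary trace inequality on each Whitney ball (derived there from Theorem 5.3 of \cite{D2} and the hypothesis \eqref{capcond}) is replaced by the local Fefferman-Phong trace inequality (derived from Theorem 1.6 of \cite{D2} and the hypothesis \eqref{FP}).

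First I would fix $\varphi\in C^\infty_0(\Om)$ and pick a Lipschitz partition of unity $\{\phi_j\}$ subordinate to the Whitney cover $\mathcal{W}=\{B_j\}$ given by Lemma \ref{Whitney}, with $\mathrm{supp}\,\phi_j\Subset 2B_j$, $|X\phi_j|\leq C/\mathrm{diam}(B_j)$, $\sum_j\phi_j=\chi_\Om$, and the bounded-overlap property $\sum_j\chi_{4B_j}\leq M\chi_\Om$. Using $\bigl(\sum_j\phi_j\bigr)^p=C(p)\sum_j\phi_j^p$ on $\Om$, we bound
\[
\int_\Om |\varphi|^p V\,dx \;\leq\; C\sum_j \int_{2B_j} |\phi_j\varphi|^p V\,dx.
\]

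The key step is then the estimate on each Whitney ball. Since $\phi_j\varphi\in C^{0,1}_d(\Rn)$ is supported in $2B_j$, and since by hypothesis \eqref{FP} the potential $V$ restricted to $2B_j$ satisfies a uniform $L^s$ Fefferman-Phong condition $\int_{B(x,r)}V^s\,dy\le C|B(x,r)|/r^{sp}$ for all $x\in 2B_j$ and $0<r<\mathrm{diam}(B_j)$, Theorem 1.6 of \cite{D2} yields the local trace inequality
\[
\int_{2B_j}|\phi_j\varphi|^p V\,dx \;\leq\; C\int_{2B_j}|X(\phi_j\varphi)|^p\,dx,
\]
with a constant $C$ independent of $j$. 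Expanding $|X(\phi_j\varphi)|^p\leq C_p\bigl(|X\varphi|^p+|X\phi_j|^p|\varphi|^p\bigr)$ and using $|X\phi_j|\leq C/\mathrm{diam}(B_j)\leq C/\delta(x)$ on $\mathrm{supp}\,\phi_j$, then summing over $j$ and invoking the bounded-overlap property, we obtain
\[
\int_\Om |\varphi|^p V\,dx \;\leq\; C\int_\Om |X\varphi|^p\,dx \;+\; C\int_\Om \frac{|\varphi|^p}{\delta(x)^p}\,dx.
\]

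Finally I would absorb the second term on the right using the Hardy inequality of Theorem \ref{HI}, which is available precisely because $\Rn\setminus\Om$ is uniformly $(X,p)$-fat. This closes the proof. The only genuine work is the verification that the localized Fefferman-Phong hypothesis \eqref{FP}, read on a single ball $2B$, is in the form required to trigger Theorem 1.6 of \cite{D2} with a constant depending only on the parameters of that ball and not on $j$; the rest is a mechanical repetition of the proof of Theorem \ref{GP}.
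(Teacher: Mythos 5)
Your proposal is essentially identical to the paper's own argument: the authors prove Theorem \ref{GP} via a Whitney partition of unity, a local trace inequality on each $\phi_j\varphi$ from Theorem 5.3 of \cite{D2}, and absorption of the lower-order term by the Hardy inequality of Theorem \ref{HI}, and then dispatch Theorem \ref{FPT} in one line by observing that the same proof goes through with Theorem 1.6 of \cite{D2} in place of Theorem 5.3 once the local capacitary condition is replaced by the localized Fefferman--Phong condition \eqref{FP}. Your only cosmetic deviation is writing the local trace estimate over $2B_j$ rather than $4B_j$ as in the paper, which is immaterial.
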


 Let $L^{s,\infty}(\Om)$, $0<s<\infty$, denote  the weak $L^s$ space on $\Om$, i.e.,
$$L^{s,\infty}(\Om)=\left\{ f : \norm{f}_{L^{s,\infty}(\Om)}<\infty \right\},$$
where
$$\norm{f}_{L^{s,\infty}(\Om)}\ =\ \sup_{t>0}\ t\ |\{x\in\Om: |f(x)|>t\}|^{\frac{1}{s}}\ .$$

Equivalently, one can take
$$\norm{f}_{L^{s,\infty}(\Om)}=\sup_{E\subset\Om:\, |E|>0}|E|^{\frac{1}{s}-\frac{1}{r}}\left(\int_{E}|f|^{r}dx\right)^{\frac{1}{r}}$$
for any $0<r<s$. For $s=\infty$, we define
$$L^{\infty,\infty}(\Om)= L^{\infty}(\Om)\ .$$

From Theorem \ref{FP} we obtain the following corollary, which
improves a similar result in \cite{DPT}, Remark 3.7 in the sense
that not only does it cover the subelliptic case but also require a
milder assumption on the boundary.

\begin{corollary} \label{LQ}
Let $\Om\subset\RR^n$ be a bounded domain with local parameters
$C_0$ and $R_0$. Suppose that $\RR^n\setminus\Om$ is uniformly $(X,
p)$-fat for $1<p<Q$, where $Q$ is the homogeneous dimension of
$\Om$. If $v\in L^{\frac{Q}{\gamma},\infty}(\Om)$ for some $0\leq
\gamma\leq p$, then the embedding \eqref{trace2} holds for the
weight $V(x)=\delta(x)^{-p+\gamma}v(x)$.
\end{corollary}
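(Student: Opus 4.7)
The plan is to reduce the claim to the localized Fefferman-Phong criterion (Theorem \ref{FPT}) applied to the weight $V(x)=\delta(x)^{-(p-\gamma)}v(x)$. Since $0\le\gamma\le p<Q$, we can choose an exponent $s>1$ with $s<Q/\gamma$ (any $s>1$ will do if $\gamma=0$). I will then verify that
\[
\sup_{B\in\mathcal{W}}\sup_{\substack{x\in 2B\\0<r<{\rm diam}(B)}}\int_{B(x,r)}V(y)^{s}dy\ \leq\ C\ \frac{|B(x,r)|}{r^{sp}},
\]
which by Theorem \ref{FPT} yields the desired embedding.

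The first ingredient is a standard Whitney comparison: if $B=B(x_j,r_j)\in\mathcal{W}$, $x\in 2B$, and $0<r<{\rm diam}(B)=2r_j$, then $B(x,r)\subset 4B$, and property (c) of Lemma \ref{Whitney} forces $\delta(y)\asymp r_j$ uniformly for $y\in 4B$. Hence
\[
\int_{B(x,r)}V(y)^{s}dy\ \leq\ C\, r_j^{-(p-\gamma)s}\int_{B(x,r)}v(y)^{s}dy.
\]
The second ingredient is the weak-$L^{Q/\gamma}$ bound
\[
\int_{E}|v|^{s}dx\ \leq\ |E|^{1-\frac{s\gamma}{Q}}\,\|v\|_{L^{Q/\gamma,\infty}(\Om)}^{s},
\]
valid (by the equivalent norm quoted right before the statement) because $s<Q/\gamma$. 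Applied with $E=B(x,r)$, this gives
\[
\int_{B(x,r)}V(y)^{s}dy\ \leq\ C\, r_j^{-(p-\gamma)s}|B(x,r)|\cdot|B(x,r)|^{-\frac{s\gamma}{Q}}.
\]

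The third ingredient is a uniform lower volume bound $|B(x,r)|\geq C\,r^{Q}$ for $x\in\overline{\Om}$ and $0<r\leq R_0$. This follows from \eqref{doubling} applied between radii $r$ and $R_0$, combined with the fact that $\Lam(x,R_0)$ (equivalently $|B(x,R_0)|$) is continuous in $x$ and strictly positive, hence uniformly bounded below on the Euclidean-compact set $\overline{\Om}$. Using this, $|B(x,r)|^{-s\gamma/Q}\leq C\,r^{-s\gamma}$. Finally, since $(p-\gamma)s\geq 0$ and $r_j\geq r/2$, we have $r_j^{-(p-\gamma)s}\leq C\,r^{-(p-\gamma)s}$, and concatenating the bounds yields
\[
\int_{B(x,r)}V(y)^{s}dy\ \leq\ C\, r^{-(p-\gamma)s}\cdot r^{-s\gamma}\cdot|B(x,r)|\ =\ C\,\frac{|B(x,r)|}{r^{sp}},
\]
which is exactly \eqref{FP}. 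An appeal to Theorem \ref{FPT} completes the proof.

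The main obstacle is the volume bound $|B(x,r)|\ge C r^Q$: the NSW polynomial $\Lam(x,r)$ has low-order terms of degree as low as $n<Q$, so this estimate requires the localization provided by the compactness of $\overline\Om$ and the rescaling inequality \eqref{lambdarescale}. Everything else is bookkeeping with the Whitney geometry and the weak-$L^{Q/\gamma}$ interpolation inequality.
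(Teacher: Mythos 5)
Your proposal is correct and follows essentially the same approach as the paper: verify the localized Fefferman-Phong condition \eqref{FP} via the Whitney comparison $\delta(y)\asymp r_j\gtrsim r$, the weak-$L^{Q/\gamma}$ H\"older estimate, and the lower volume bound $|B(x,r)|\gtrsim r^{Q}$ coming from doubling, then invoke Theorem \ref{FPT}. The only difference is cosmetic — you spell out the uniform lower bound on $|B(x,R_0)|$ over $\overline\Om$, which the paper leaves implicit in its appeal to \eqref{doubling}.
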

\begin{proof}
Let $\mathcal{W}=\{B_j\}$ is a Whitney decompositon of $\Om$ as in Lemma \ref{Whitney}.
For $x\in 2B$, $B\in\mathcal{W}$, $0<r<{\rm diam}(B)$, and $1<s <\frac{Q}{\gamma}$, we have
$$\int_{B(x,r)}V(y)^{s}dy\leq C r^{-sp+s\gamma}\int_{B(x,r)}v(y)^sdy.$$

It is then easily seen from H\"older's inequality and the doubling
property \eqref{doubling} that
\begin{eqnarray*}
\int_{B(x,r)}V(y)^{s}dy &\leq& C r^{-sp}|B(x,r)|\norm{v}^{s}_{L^{\frac{Q}{\gamma},\infty}(\Om)}
\left(\frac{r}{|B(x,r)|^{\frac{1}{Q}}}\right)^{s\gamma}\\
&\leq& C r^{-sp}|B(x,r)| \norm{v}_{L^{\frac{Q}{\gamma},\infty}(\Om)}.
\end{eqnarray*}
By Theorem \ref{FPT} we obtain the corollary.

\end{proof}

The results obtained in Corollary \ref{LQ} do not in general  cover
the case in which $v(x)$ has a point singularity in $\Om$, such as
$V(x)=\delta(x)^{-p+\gamma} d(x, x_0)^{-\gamma}$, with $0\leq
\gamma\leq p$  and $1<p<Q(x_0)$ for some $x_0\in\Om$, where $Q(x_0)$
is the homogeneous dimension at $x_0$. The reason is that it may
happen that $Q(x_0)<Q$ and hence $d(\cdot,x_0)^{-\gamma} \not\in
L^{\frac{Q}{\gamma},\infty}(\Om)$. However, by the upper estimate in \eqref{lambdarescale}
we still can obtain  inequality \eqref{trace2} for such weights as follows.

\begin{corollary}\label{LQ1}
Let $\Om\subset\RR^n$ be a bounded domain with local parameters
$C_0$ and $R_0$. Given $x_0\in \Om$ suppose that $\RR^n\setminus\Om$
is uniformly $(X, p)$-fat for $1<p<Q(x_0)$. Then for any $0\leq
\gamma\leq p$ the embedding \eqref{trace2} holds for the weight
$$V(x)=\delta(x)^{-p+\gamma} d(x, x_0)^{-\gamma}\ .$$
\end{corollary}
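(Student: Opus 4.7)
The plan is to verify the localized Fefferman-Phong condition \eqref{FP} for the weight $V(x) = \delta(x)^{-p+\gamma} d(x,x_0)^{-\gamma}$, after which Theorem \ref{FPT} immediately yields the embedding \eqref{trace2}. Since $0 \le \gamma \le p < Q(x_0)$, I first fix an exponent $s > 1$ so close to $1$ that $s\gamma < Q(x_0)$ (any $s > 1$ works when $\gamma = 0$; when $\gamma > 0$, any $1 < s < Q(x_0)/\gamma$ will do, and such $s$ exists because $Q(x_0)/\gamma \ge Q(x_0)/p > 1$).

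Fix $B = B_j \in \mathcal W$ with radius $r_j$, let $x \in 2B$ and $0 < r < \mathrm{diam}(B)$, so $r \lesssim r_j$. The Whitney property (c) in Lemma \ref{Whitney} forces $\mathrm{dist}(B_j,\partial\Om) \sim r_j$, and hence $\delta(y) \sim r_j$ for $y \in 4B \supset B(x,r)$. Because $p - \gamma \ge 0$, this gives $\delta(y)^{-(p-\gamma)} \le C\, r_j^{-(p-\gamma)} \le C\, r^{-(p-\gamma)}$ on $B(x,r)$, and therefore
$$\int_{B(x,r)} V(y)^s\, dy \;\le\; C\, r^{-s(p-\gamma)} \int_{B(x,r)} d(y,x_0)^{-s\gamma}\, dy.$$
Comparing with \eqref{FP}, the whole matter is reduced to proving
$$\int_{B(x,r)} d(y,x_0)^{-s\gamma}\, dy \;\le\; C\, r^{-s\gamma}\, |B(x,r)|.$$

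I would establish the last inequality by splitting into two cases. If $d(x,x_0) \ge 2r$, then $d(y,x_0) \ge r$ throughout $B(x,r)$ and the bound is trivial. Otherwise $B(x,r) \subset B(x_0, 3r)$, and a dyadic decomposition about $x_0$ gives
$$\int_{B(x_0,3r)} d(y,x_0)^{-s\gamma}\, dy \;\le\; C \sum_{k \ge 0} \bigl(2^{-k} r\bigr)^{-s\gamma}\, \bigl|B(x_0, 2^{-k}\cdot 3r)\bigr|.$$
The upper half of \eqref{lambdarescale}, combined with \eqref{nsw2}, yields $|B(x_0, 2^{-k}\cdot 3r)| \le C\, 2^{-k Q(x_0)}\, |B(x_0,3r)|$, so the sum reduces to a geometric series with ratio $2^{s\gamma - Q(x_0)} < 1$ by the choice of $s$, producing the bound $C\, r^{-s\gamma}\, |B(x_0,3r)|$. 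Finally $B(x_0,3r) \subset B(x,5r)$ together with the doubling property \eqref{doubling} gives $|B(x_0,3r)| \le C\, |B(x,r)|$, completing the verification of \eqref{FP}.

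The main obstacle---and the precise point where Corollary \ref{LQ} fails and must be replaced by a separate argument---is that $d(\cdot,x_0)^{-\gamma}$ need not belong to $L^{Q/\gamma,\infty}(\Om)$ when $Q(x_0) < Q$. The remedy is supplied by the upper half of \eqref{lambdarescale}, which encodes the fact that local volume growth near $x_0$ is governed by the smaller pointwise dimension $Q(x_0)$ rather than the global $Q$; this is exactly what forces the geometric series to converge and is the reason the hypothesis can be weakened from $p < Q$ to $p < Q(x_0)$.
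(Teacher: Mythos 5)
Your proof is correct and follows essentially the same approach as the paper's: verify the localized Fefferman-Phong condition \eqref{FP} with $1<s<Q(x_0)/\gamma$, split according to whether $x$ is within distance $2r$ of $x_0$, perform a dyadic decomposition of $B(x_0,3r)$ into annuli in the nontrivial case, and invoke the upper estimate in \eqref{lambdarescale} to obtain a geometric series with ratio $2^{s\gamma-Q(x_0)}<1$, before concluding with the doubling property and Theorem \ref{FPT}.
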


\begin{proof}
Let $\mathcal{W}=\{B_j\}$ be a Whitney decomposition of $\Om$ as in
Lemma \ref{Whitney}. For $x\in 2B$, $B\in\mathcal{W}$, $0<r<{\rm
diam}(B)$, and $1<s <\frac{Q(x_0)}{\gamma}$, we have
\begin{equation}\label{Vs}
\int_{B(x,r)}V(y)^{s}dy\leq C\
r^{-sp+s\gamma}\int_{B(x,r)}d(y,x_0)^{-\gamma s}dy\ .
\end{equation}

Thus if $x\not\in B(x_0,2r)$ then
$$\int_{B(x,r)}V(y)^{s}dy\leq C\ \frac{|B(x,r)|}{r^{sp}}$$
since for such $x$ we have $d(y,x_0)\geq r$ for every $y\in B(x,r)$. On the other hand, if $x\in B(x_0,2r)$
then from \eqref{Vs} we find
\begin{eqnarray*}
\int_{B(x,r)}V(y)^{s}dy&\leq& C\  r^{s\gamma-sp}
\int_{B(x_0,3r)}d(y,x_0)^{-\gamma s}dy\\
&=& C\ r^{s\gamma-sp}\sum_{k=0}^{\infty}\int_{\frac{3r}{2^{k+1}}\leq d(y,x_0)
< \frac{3r}{2^{k}}}d(y,x_0)^{-\gamma s}dy\\
&\leq& C\ r^{s\gamma-sp}\sum_{k=0}^{\infty} \Big(\frac{r}{2^{k}}\Big)^{-\gamma s}
\left|B(x_0, \tfrac{3r}{2^k})\right|.
\end{eqnarray*}

Thus in view of \eqref{lambdarescale} and the doubling property \eqref{doubling} we obtain
\begin{eqnarray*}
\int_{B(x,r)}V(y)^{s}dy&\leq&  C\ r^{s\gamma-sp}\sum_{k=0}^{\infty}
\Big(\frac{r}{2^{k}}\Big)^{-\gamma s}\Big(\frac{1}{2^k}\Big)^{Q(x_0)}|B(x_0,3r)| \\
&\leq& C\ \frac{|B(x_0,3r)|}{r^{sp}} \sum_{k=0}^{\infty}\Big(\frac{1}{2^k}\Big)^{Q(x_0)-\gamma s}\\
&\leq& C(x_0)\ \frac{|B(x,r)|}{r^{sp}}.
\end{eqnarray*}

Thus  by Theorem \ref{FPT} we obtain the corollary.

\end{proof}

\begin{remark}
If we have $\gamma = p$ in Corollary \ref{LQ1}, then  we do not need to assume $\Rn \setminus \Om$ to be
uniformly $(X,p)$-fat. In fact, to obtain the embedding \eqref{trace2} in this case
we  use Theorem 1.6 in \cite{D2}, the Poincar\'e inequality \eqref{poi}, and a finite partition of unity for $\Om$.
\end{remark}

\section{Hardy inequalities with sharp constants}\label{S:sc}

In this section we collect, without proofs, for illustrative
purposes some theorems from the forthcoming article \cite{DGP}. The
relevant results pertain certain Hardy-Sobolev inequalities on
bounded and unbounded domains with a point singularity which are
connected to the results in section \ref{S:hbd}, but are not
included in them.

We begin by recalling that when $X = \{X_1,...,X_m\}$ constitutes an
orthonormal basis of bracket generating vector fields in a Carnot
group $\bf G$, then a fundamental solution $\gp$ for $- \mathcal
L_p$ in all of $\bf G$ was constructed in \cite{DG}. For any bounded
open set $\Om\subset \Rn$ one can construct a positive fundamental
solution with generalized zero boundary values, i.e., a Green
function, in the more general situation of a Carnot-Carath\'eodory
space. Henceforth, for a fixed $x\in \Om$ we will denote by
$\gp(x,\cdot)$ such fundamental solution with singularity at some
fixed $x\in\Om$. This means that $\gp(x,\cdot)$ satisfies the
equation
\begin{equation}\label{wsH}
\int_{\Om } |X\gp(x,y)|^{p-2} <X\gp(x,y),X \varphi(y)> dy\ =\
\varphi(x)\ ,
\end{equation}
for every $\varphi \in C^\infty_0(\Om)$.

We recall the following fundamental estimate, which is Theorem 7.2
in \cite{CDG3}: Let $K\subset \Om\subset \Rn$ be a compact set, with
local parameters $C_0$ and $R_0$. Given $x\in K$, and $1<p<Q(x)$,
there exists a positive constant $C$, depending on $C_0$ and $p$,
such that for any $0<r\leq R_0/2$, and $y\in B(x,r)$ one has
\begin{equation}\label{me}
C\ \left(\frac{d(x,y)^p}{\Lambda(x,d(x,y))}\right)^{\frac{1}{p-1}}\
\leq\ \gp(x,y)\ \leq\ C^{-1}\
\left(\frac{d(x,y)^p}{\Lambda(x,d(x,y))}\right)^{\frac{1}{p-1}}\ .
\end{equation}

The estimate \eqref{me} generalizes that obtained by Nagel, Stein
and Wainger \cite{NSW}, and independently by Sanchez-Calle \cite{SC}
in the case $p = 2$.

For any given $x\in K$, we fix a number $p = p(x)$ such that
$1<p<Q(x)$, and introduce the function
\begin{equation}\label{E}
E(x,r)\ \overset{def}{=}\
\left(\frac{\Lambda(x,r)}{r^p}\right)^{\frac{1}{p-1}}\ .
\end{equation}

Because of the constraint imposed on $p = p(x)$, we see that, for
every fixed $x\in K$, the function $r \to E(x,r)$ is strictly
increasing, and thereby invertible. We denote by $F(x,\cdot) =
E(x,\cdot)^{-1}$, the inverse function of $E(x,\cdot)$, so that
\[
F(x,E(x,r))\
 =\ E(x,F(x,r))\ =\ r\ .
 \]

We now define for every $x\in K$
\begin{equation}\label{rhoH}
\rho_x(y)\ =\ F\left(x,\frac{1}{\Gamma(x,y)}\right)\ .
\end{equation}

We emphasize that in a Carnot group $\bG$ one has, for every $x\in
\bG$, $Q(x) \equiv Q$, the homogeneous dimension of the group, and
therefore the Nagel-Stein-Wainger polynomial is in fact just a
monomial, i.e., $\Lambda(x,r)\equiv C({\bf G}) r^Q$. It follows that
there exists a constant $\omega({\bf G})>0$ such that
\begin{equation}\label{Eg}
 E(x,r)\ \equiv\ \omega({\bf G})\ r^{(Q-p)/(p-1)}\ .
\end{equation}

Using the function $E(x,r)$ in \eqref{E} it should be clear that we
can recast the estimate \eqref{me} in the following more suggestive
form
\begin{equation}\label{me2}
\frac{C}{E(x,d(x,y))}\ \leq\ \gp(x,y)\ \leq \
\frac{C^{-1}}{E(x,d(x,y))}\ .
\end{equation}

As a consequence of \eqref{me2} and of \eqref{rhoH}, we obtain the
following estimate: \emph{there exist positive constants $C, R_0$,
depending on $X_1,...,X_m$ and $K$, such that for every $x\in K$,
and every $0<r\leq R_0$, one has for} $y\in B(x,r)$
\begin{equation}\label{pgH} C\ d(x,y)\ \leq\
\rho_x(y)\ \leq\ C^{-1}\ d(x,y)\ .
\end{equation}

We can thus think of the function $\rho_x$ as a \emph{regularized
pseudo-distance} adapted to the nonlinear operator $\mathcal L_p$.
We denote by \[ B_X(x,r)\ =\ \{y\in \Rn \mid \rho_x(y) < r\}\ ,
\]
the ball centered at $x$ with radius $r$ with respect to the
pseudo-distance $\rho_x$. Because of \eqref{pgH} it is clear that
\[
B(x,C r)\ \subset \ B_X(x,r) \ \subset \ B(x,C^{-1} r)\ .
\]

Our main assumption is that for any $p>1$ the fundamental solution
of the operator $\mathcal L_p$ satisfy the following

\medskip

\noindent \textbf{Hypothesis:}
 \emph{For any compact set $K\subset \Om\subset \Rn$ there exist $C>0$ and $R_0>0$, depending on $K$ and $X_1,...,X_m$,
such that for every $x\in \Om$, $0<R<R_0$ for which
$B_X(x,4R)\subset \Om$ , and a.e. $y \in B(x,R)\setminus\{x\}$ one
has}
\begin{equation}\label{Hyp}
|X \gp(x,y)|\ \leq\ C^{-1}\
\left(\frac{d(x,y)}{\Lambda(x,d(x,y))}\right)^{\frac{1}{p-1}}\ .
\end{equation}

\medskip

We mention explicitly that, as a consequence of the results in
\cite{NSW} and \cite{SC}, the assumption \eqref{Hyp} is fulfilled
when $p=2$. For $p\not=2$ it is also satisfied in any Carnot group
of Heisenberg type $\bf G$. This follows from the results in
\cite{CDG3}, where for every $1<p<\infty$ the following explicit
fundamental solution of $-\mathcal L_p$ was found:
\begin{equation}\label{fsHtype}
 \gp(g)\ =\ \begin{cases} \frac{p-1}{Q-p}
 \sigma_p^{-\frac{1}{p-1}}\ N(g)^{- \frac{Q-p}{p-1}}\
 ,\quad\quad\quad p \not= Q\ ,
 \\
 \\
\sigma_Q^{-\frac{1}{Q-1}}\ \log N(g)\ ,\quad\quad\quad p = Q\ ,
 \end{cases}
 \end{equation}
where we have denoted by $N(g) = (|x(g)|^4 + 16
|y(g)|^2)^{\frac{1}{4}}$ Kaplan's gauge on $\bG$, see \cite{K}, and
we have let $\sigma_p = Q \omega_p$, with
\[
\omega_p\ =\ \int_{\{g\in \bG\mid N(g)<1\}} |X N(g)|^p\ dg\ .
\]

We note that the case $p=2$ of \eqref{fsHtype} was first discovered
by Folland \cite{F1} for the Heisenberg group, and subsequently
generalized by Kaplan \cite{K} to groups of Heisenberg type. The
conformal case $p = Q$ was also found in \cite{HH}.

We stress that the hypothesis \eqref{Hyp} is not the weakest one
that could be made, and that to the expenses of additional
technicalities, we could have chosen substantially weaker
hypothesis.

We now recall the classical one-dimensional Hardy inequality
\cite{H}: let $1<p<\infty$, $u(t)\geq 0$, and $\varphi(t) = \int_0^t
u(s) ds$, then
\[
\int_0^\infty \left(\frac{\varphi(t)}{t}\right)^p dt\ \leq\
\left(\frac{p}{p-1}\right)^p\ \int_0^\infty \varphi'(t)^p dt\ .
\]

Here is our main result.

\begin{theorem}\label{T:HardyH}
Given a compact set $K\subset \Om\subset \Rn$, let $x\in K$, and
suppose that $1<p<Q(x)$. For any $0<R<R_0$ such that
$B_X(x,4R)\subset \Om$ one has for $\varphi\in S^{1,p}_0(B_X(x,R))$
\begin{equation*}\label{hHo}
\int_{B_X(x,R)} |\varphi|^{p}
\left\{\frac{E'(x,\rho_x)}{E(x,\rho_x)}\right\}^p\ |X \rho_x|^p\ dy\
\leq\ \left(\frac{p}{p-1}\right)^p\ \int_{B_X(x,R)} |X \varphi|^p\
dy\ .
\end{equation*}
When $\Lambda(x,r)$ is a monomial (thus, e.g., in the case of a
Carnot group) the constant in the right-hand side of the above
inequality is best possible.
\end{theorem}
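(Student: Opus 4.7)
The plan is to reduce the stated inequality to the ``fundamental--solution'' Hardy inequality involving only $\gp$, and then to prove the latter by testing the weak $\mathcal{L}_p$--harmonicity of $\gp$ against a carefully chosen function. From the definitions $F(x,\cdot)=E(x,\cdot)^{-1}$ and $\rho_x(y)=F(x,1/\gp(x,y))$ one has $\gp(x,y)=1/E(x,\rho_x(y))$; differentiating gives $X\gp=-E'(x,\rho_x)E(x,\rho_x)^{-2}X\rho_x$, and hence
\[
\frac{|X\gp|^{p}}{\gp^{p}}\ =\ \Bigl\{\tfrac{E'(x,\rho_x)}{E(x,\rho_x)}\Bigr\}^{p}\,|X\rho_x|^{p}.
\]
So the theorem is equivalent to
\[
\int_{B_X(x,R)}|\varphi|^{p}\,\frac{|X\gp|^{p}}{\gp^{p}}\,dy\ \leq\ \Bigl(\tfrac{p}{p-1}\Bigr)^{p}\int_{B_X(x,R)}|X\varphi|^{p}\,dy,
\]
which is the form I shall actually prove.

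For $\varphi\in C_0^\infty(B_X(x,R))$ nonnegative and supported away from $x$, I will insert the test function $\psi=\varphi^{p}\gp^{1-p}/(p-1)$ into \eqref{wsH} (which is available because $\psi\in C_0^\infty(B_X(x,R)\setminus\{x\})$ and $-\mathcal L_p\gp=0$ away from $x$). Expanding $X\psi$ yields the key identity
\[
\int\varphi^{p}\,\frac{|X\gp|^{p}}{\gp^{p}}\,dy\ =\ \frac{p}{p-1}\int\varphi^{p-1}\,\frac{|X\gp|^{p-2}\,\langle X\gp,X\varphi\rangle}{\gp^{p-1}}\,dy.
\]
Estimating the inner product by Cauchy--Schwarz and then applying H\"older with conjugate exponents $p/(p-1)$ and $p$, the right side is bounded by
\[
\frac{p}{p-1}\left(\int\varphi^{p}\,\frac{|X\gp|^{p}}{\gp^{p}}\,dy\right)^{\!\!(p-1)/p}\!\left(\int|X\varphi|^{p}\,dy\right)^{\!\!1/p},
\]
and a rearrangement produces the desired inequality with constant $(p/(p-1))^{p}$.

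Two passes to the limit then close the argument. Density of $C_{0}^{\infty}(B_X(x,R))$ in $S_{0}^{1,p}(B_X(x,R))$ handles the regularity of $\varphi$; next, a radial cutoff $\eta_{\varepsilon}$, equal to $0$ on $B_X(x,\varepsilon)$, to $1$ off $B_X(x,2\varepsilon)$, with $|X\eta_{\varepsilon}|\leq C/\varepsilon$, is used to push the support away from $x$. The excess terms generated by $X\eta_{\varepsilon}$ are controlled pointwise via the hypothesis \eqref{Hyp} on $|X\gp|$, the two--sided estimate \eqref{me}, and the volume bound \eqref{nsw2}; the restriction $1<p<Q(x)$ is what makes these error terms of order $O(\varepsilon^{Q(x)-p})$ and hence vanish as $\varepsilon\to 0^{+}$. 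This rigorous removal of the pole is the main technical obstacle, and it is precisely where the restriction $p<Q(x)$ enters.

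For sharpness when $\Lambda(x,r)$ is a monomial, note that the Cauchy--Schwarz and H\"older steps above become equalities exactly on functions whose gradient is pointwise parallel to $\gp^{-1}X\gp$; the formal extremizer is $\varphi^{\ast}=\gp^{(p-1)/p}$. In the Carnot--group case $\gp$ has the explicit power form $\gp=c_{p}N^{-(Q-p)/(p-1)}$ for an appropriate homogeneous norm $N$ (as in \eqref{fsHtype} for Heisenberg--type groups, and more generally in view of \eqref{Eg}), so $\varphi^{\ast}$ is an explicit power of $N$. Inserting a two--parameter truncation of $\varphi^{\ast}$ (cutting it off both near the pole and near $\partial B_X(x,R)$), and computing both sides explicitly using homogeneity, one verifies that the ratio of the two sides tends to $(p/(p-1))^{p}$, which shows that the constant cannot be improved.
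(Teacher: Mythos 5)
The paper states Theorem \ref{T:HardyH} without proof, explicitly referring the reader to the forthcoming article \cite{DGP}; there is therefore no in-paper argument to compare against, so I evaluate your proposal directly. Your strategy is the classical Rellich/vector-field method adapted to the subelliptic fundamental solution $\gp$: using $\gp(x,\cdot)=1/E(x,\rho_x(\cdot))$ to identify $|X\gp|^p/\gp^p$ with the Hardy weight, inserting $\psi=\varphi^p\gp^{1-p}/(p-1)$ into the weak $\mathcal L_p$-harmonicity of $\gp$ on $\Om\setminus\{x\}$, and closing by Cauchy--Schwarz and H\"older. The algebra is correct (including the pointwise equality of the two sides for $\varphi^*=\gp^{(p-1)/p}$, which is the Cauchy--Schwarz/H\"older equality case), and your estimate for the cutoff error near the pole is right: over the annulus $B_X(x,2\varepsilon)\setminus B_X(x,\varepsilon)$ the surplus term from $X\eta_\varepsilon$ is $O(\Lambda(x,\varepsilon)/\varepsilon^p)=O(\varepsilon^{Q(x)-p})$ by \eqref{Hyp}, \eqref{me}, \eqref{nsw2}, and this vanishes exactly because $p<Q(x)$.

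Two points to tighten in a full write-up. First, after proving the inequality for nonnegative $\varphi$ supported away from $x$, pass to general $\varphi\in S^{1,p}_0(B_X(x,R))$ by replacing $\varphi$ with $|\varphi|$ (Lipschitz truncations, with $|X|\varphi||=|X\varphi|$ a.e.) and then using density. Second, for the sharpness the plain truncation of $\varphi^*$ is not quite enough: since both $\int(\varphi^*)^p|X\gp|^p/\gp^p$ and $\int|X\varphi^*|^p$ diverge like $\log(1/\varepsilon)$, you must choose the inner cutoff on a logarithmic scale (the usual log-cutoff device) so that the surplus from $X\eta_\varepsilon$ stays $O(1)$ while the main terms blow up; then the ratio does tend to $(p/(p-1))^p$, and the monomial form of $\Lambda(x,r)$ (so that $E(x,r)$ is a pure power, via \eqref{Eg} and \eqref{me2}) is precisely what makes this explicit. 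With these refinements the proposal is a valid, self-contained route to the stated theorem.
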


We do not present here the proof of Theorem \ref{T:HardyH}, but
refer the reader to the forthcoming article \cite{DGP}. Some
comments are in order. First of all, concerning the factor $|X
\rho_x|^p$ in the left-hand side of the inequality in Theorem
\ref{T:HardyH}, we emphasize that the hypothesis \eqref{Hyp} implies
that $X\rho_x\in L^\infty_{\rm{loc}}$. Secondly, as it is shown in
\cite{DGP} one has
\begin{equation}\label{lambda}
 \left(\frac{Q(x) -
p}{p-1}\right)^p \frac{1}{\rho_x^p}\ \leq\
\left\{\frac{E'(x,\rho_x)}{E(x,\rho_x)}\right\}^p\ \leq\
\left(\frac{Q - p}{p-1}\right)^p \frac{1}{\rho_x^p}\ .
\end{equation}

As a consequence of Theorem \ref{T:HardyH} and \eqref{lambda} we
thus obtain the following.

\begin{corollary}\label{C:HardyH}
In the same hypothesis of Theorem \ref{T:HardyH} one has for
$\varphi\in S^{1,p}_0(B_X(x,R))$
\begin{equation}\label{hardyo}
\int_{B_X(x,R)} \frac{|\varphi|^{p}}{\rho_x^p}  |X \rho_x|^p\ dy\
\leq\ \left(\frac{p}{Q(x)-p}\right)^p\ \int_{B_X(x,R)} |X
\varphi|^p\ dy\ .
\end{equation}
\end{corollary}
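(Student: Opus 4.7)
The plan is to derive Corollary \ref{C:HardyH} as an essentially immediate consequence of Theorem \ref{T:HardyH} together with the two-sided bound \eqref{lambda} on the logarithmic derivative of $E(x,\cdot)$. Since Theorem \ref{T:HardyH} already contains the sharp inequality in the intrinsic form involving $\{E'(x,\rho_x)/E(x,\rho_x)\}^p$, all that remains is to pass from this intrinsic expression to the more transparent factor $1/\rho_x^p$ on the left-hand side.

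First I would quote Theorem \ref{T:HardyH}, which gives, for any $\varphi\in S^{1,p}_0(B_X(x,R))$,
\begin{equation*}
\int_{B_X(x,R)} |\varphi|^{p}
\left\{\frac{E'(x,\rho_x)}{E(x,\rho_x)}\right\}^p |X\rho_x|^p\, dy
\;\leq\; \left(\frac{p}{p-1}\right)^p \int_{B_X(x,R)} |X\varphi|^p\, dy .
\end{equation*}
Next I would invoke the \emph{lower} bound in \eqref{lambda}, namely
\begin{equation*}
\left(\frac{Q(x)-p}{p-1}\right)^p \frac{1}{\rho_x^p}
\;\leq\;
\left\{\frac{E'(x,\rho_x)}{E(x,\rho_x)}\right\}^p ,
\end{equation*}
which is where the hypothesis $1<p<Q(x)$ is essential (it guarantees the factor $(Q(x)-p)/(p-1)$ is strictly positive so that the bound is nontrivial). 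Multiplying through by the pointwise nonnegative weight $|\varphi|^p |X\rho_x|^p$ and integrating yields
\begin{equation*}
\left(\frac{Q(x)-p}{p-1}\right)^p \int_{B_X(x,R)} \frac{|\varphi|^{p}}{\rho_x^p}\,|X\rho_x|^p\, dy
\;\leq\;
\int_{B_X(x,R)} |\varphi|^{p} \left\{\frac{E'(x,\rho_x)}{E(x,\rho_x)}\right\}^p |X\rho_x|^p\, dy .
\end{equation*}

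Chaining these two inequalities and then dividing by $\big((Q(x)-p)/(p-1)\big)^p$ produces precisely the claimed constant $\big(p/(Q(x)-p)\big)^p$, since $(p/(p-1))^p / ((Q(x)-p)/(p-1))^p = (p/(Q(x)-p))^p$. The proof requires no new analytic input beyond the two inputs already granted; the only nontrivial point is confirming that all quantities involved are well defined for $\varphi \in S^{1,p}_0(B_X(x,R))$, which follows from the hypothesis \eqref{Hyp} guaranteeing $X\rho_x \in L^\infty_{\rm loc}$ and from \eqref{pgH} allowing us to interpret $1/\rho_x^p$ in the usual pointwise sense away from $x$. There is no real obstacle; the main conceptual work has already been done in establishing Theorem \ref{T:HardyH} and the estimate \eqref{lambda}.
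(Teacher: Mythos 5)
Your proof is correct and follows exactly the route the paper intends: combine the sharp intrinsic inequality of Theorem \ref{T:HardyH} with the lower bound in \eqref{lambda}, then divide through by $\bigl((Q(x)-p)/(p-1)\bigr)^p$ to arrive at the constant $\bigl(p/(Q(x)-p)\bigr)^p$. The paper states the corollary as an immediate consequence of Theorem \ref{T:HardyH} and \eqref{lambda} without further detail, and your argument supplies precisely that short computation.
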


Thirdly, it is worth observing that, with the optimal constants,
neither Theorem \ref{T:HardyH}, nor Corollary \ref{C:HardyH} can be
obtained from Corollary \ref{LQ1} above.

We mention in closing that for the Heisenberg group $\Hn$ with $p=2$
Corollary \ref{C:HardyH} was first proved in \cite{GL}. The
inequality \eqref{hardyo} was extended to the nonlinear case
$p\not=2$ in \cite{NZW}. For Carnot groups of Heisenberg type and
also for some operators of Baouendi-Grushin type the inequality
\eqref{hardyo} has been obtained in \cite{Dam}. In the case $p=2$
various weighted Hardy inequalities with optimal constants in groups
of Heisenberg type have also been independently established in
\cite{Ko}. An interesting generalization of the results in
\cite{NZW}, along with an extension to nilpotent Lie groups with
polynomial growth has been recently obtained in \cite{Lo}. In this
latter setting an interesting form of the uncertainty principle
 connected to the case $p=2$ of the Hardy type inequality \eqref{hardyo} has been
established in \cite{CRS}. These latter two references are not
concerned however with the problem of finding the sharp constants.


\begin{thebibliography}{xxxxxx}
\bibitem[An]{An} A. Ancona, {\it On strong barriers and an inequality of Hardy for
domains in $\RR^n$}, J. London Math. Soc. {\bf 34} (1986) 274--290.

\bibitem[B]{B} M. Biroli, {\it Schr\"odinger type and relaxed Dirichlet problems for the
subelliptic $p$-Laplacian}, Potential Analysis, {\bf 15} (2001),
1--16.

\bibitem[BMS]{BMS} J. Bj\"orn, P. MacManus and N. Shanmugalingam, {\it Fat sets and pointwise boundary estimates for $p$-harmonic functions in metric spaces},
J. Anal. Math.  {\bf 85}  (2001), 339--369.

\bibitem[CDG1]{CDG1}
L. Capogna, D. Danielli and N. Garofalo, \emph{An embedding theorem
and the Harnack inequality for nonlinear subelliptic equations},
Comm. Partial Differential Equations, \textbf{18}~(1993), no. 9-10,
1765-1794.




\bibitem[CDG3]{CDG3}
\bysame, \emph{Capacitary estimates and the local behavior of
solutions of nonlinear subelliptic equations}, Amer. J. Math.,
\textbf{118}~(1996), no.6, 1153-1196.


\bibitem[CDG2]{CDG}
\bysame, {\it Subelliptic mollifiers and a basic pointwise estimate
of Poincar\'e type}, Math. Z.  {\bf 226} (1997),  147--154.



\bibitem[CG]{CG}
L. Capogna and N. Garofalo, \emph{Boundary behavior of nonnegative
solutions of subelliptic equations in NTA domains for
Carnot-Carathéodory metrics}, J. Fourier Anal. Appl. \textbf{4}~
(1998), no. 4-5, 403--432.


\bibitem[CKN]{CKN}
L. Caffarelli, R. Kohn and L. Nirenberg, \emph{First order
interpolation inequalities with weights}, Compositio Math.,
\textbf{53}~(1984), no. 3, 259-275.





\bibitem[Ch]{Ch} W. L. Chow, {\it  $\ddot{U}ber$ systeme von linearen partiellen Differentialgleichungen
erster Ordnung}, Math. Annalen, {\bf 117} (1939), 98--105.
\bibitem[CRS]{CRS} P. Ciatti, F. Ricci, and M. Sundari, {\it Heisenberg-Pauli-Weyl uncertainty inequalities and
polynomial volume growth},  Adv. Math.  {\bf 215}  (2007),  616--625.

\bibitem[Dam]{Dam} L. D'ambrosio, {\it Hardy-type inequalities related to degenerate elliptic differential
operators}, Ann. Scuola Norm. Sup. Pisa Cl. Sci. (5) {\bf IV} (2005), 451--486.

\bibitem[D1]{D1} D. Danielli, {\it Regularity at the boundary for solutions of nonlinear subelliptic equations},
Indiana Univ. Math. J. {\bf 44} (1995), 269--285.
\bibitem[D2]{D2} D. Danielli, {\it A Fefferman-Phong type inequality and applications to quasilinear subelliptic
equations}, Potential Analysis {\bf 11} (1999), 387--413.


\bibitem[DG]{DG}
D. Danielli and N. Garofalo, \emph{Green functions in nonlinear
potential theory in Carnot groups and the geometry of their level
sets}, preprint, 2003.

\bibitem[DGP]{DGP}
D. Danielli, N. Garofalo and N. C. Phuc, \emph{Hardy-Sobolev
inequalities with sharp constants in Carnot-Carath\'eodory spaces},
preprint, 2008.

\bibitem[DPT]{DPT} D. M. Duc, N. C. Phuc and T. V. Nguyen {\it Weighted Sobolev's inequalities for bounded domains
and singular elliptic equations}, Indiana Univ. Math. J.  {\bf 56}
(2007), 615--642.
\bibitem[Fef]{Fef} C. Fefferman, {\it The uncertainty principle}, Bull. Amer.
Math. Soc. {\bf 9} (1983), 129--206.
\bibitem[FS]{FS} G. B. Folland and E. M. Stein {\it Hardy spaces on homogeneous groups},
Math. Notes no. 28. Princeton Univ. Press, Princeton, N. J. 1982.


\bibitem[F1]{F1}
  G. Folland, \emph{A fundamental solution for a subelliptic operator}, Bull. Amer. Math. Soc., \textbf{79}~(1973), 373-376.

\bibitem[F2]{F2}
G. B. Folland, \emph{Subelliptic estimates and function spaces on
nilpotent Lie groups}, Ark. Math., \textbf{13}~(1975), 161-207.




\bibitem[FSS]{FSS}
B. Franchi, R. Serapioni and F. Serra Cassano, \emph{Approximation
and imbedding theorems for weighted Sobolev spaces associated with
Lipschitz continuous vector fields}, Boll. Un. Mat. Ital. B (7)
\textbf{11}~(1997), no. 1, 83-117.

\bibitem[GL]{GL}
N. Garofalo \& E. Lanconelli, \emph{Frequency functions on the
Heisenberg group, the uncertainty principle and unique
continuation}, Ann. Inst. Fourier (Grenoble), \textbf{40}~(1990),
no. 2, 313-356.

\bibitem[GN1]{GN1} N. Garofalo and D. M. Nhieu, {\it Isoperimetric and Sobolev inequalities for Carnot-Carath\'eodory
spaces and the existence of minimal surfaces}, Comm. Pure Appl. Math. {\bf 49} (1996), 1081--1144.

\bibitem[GN2]{GN2} N. Garofalo and D. M. Nhieu, {\it Lipschitz continuity, global smooth approximations and
extension theorems for Sobolev functions in Carnot-Carath\'eodary spaces}, J. Anal. Math. {\bf 74} (1998), 67--97.

\bibitem[Ha]{Ha} P. Haj\l asz, {\it Pointwise Hardy inequalities},
Proc. Amer. Math. Soc. {\bf 127} (1999), 417--423.

\bibitem[HMV]{HMV} K. Hansson, V. G.  Maz'ya, and I. E. Verbitsky,
{\it Criteria of solvability for multidimensional Riccati equations},
Ark. Mat.  {\bf 37} (1999), 87--120.

\bibitem[H]{H}
G. Hardy, \emph{Note on a theorem of Hilbert}, Math. Zeit.,
\textbf{6}~(1920), 314-317.

\bibitem[HH]{HH}
J. Heinonen and I. Holopainen, \emph{Quasiregular maps on Carnot
groups}, J. Geom. Anal. 7 (1997), no. 1, 109-148.

\bibitem[HK]{HK} J. Heinonen and P. Koskela, {\it Quasiconformal maps in metric spaces with controlled geometry},
Acta Math.  {\bf 181} (1998),  1--61.

\bibitem[HKM]{HKM} J. Heinonen, T. Kilpel\"ainen,  and   O. Martio,  {\it Nonlinear Potential
Theory of Degenerate Elliptic Equations}, Oxford Univ. Press,
Oxford, 1993.


\bibitem[Hor]{Hor} L. H\"ormander, {\it Hypoelliptic second order differential
equations}, Acta Math. {\bf 119} (1967), 147--171.




\bibitem[Hu]{Hu}
R. Hunt, \emph{On $L^{p,q}$ spaces}, Eins.
Math.   {\bf 12} (1966), 249--276.

\bibitem[J]{J} D. Jerison, {\it The Poincar\'e inequality for vector fields satisfying H\"ormander's condition},
Duke Math.  J. {\bf 53} (1986), 503--523.

\bibitem[K]{K}
A. Kaplan, \emph{Fundamental solutions for a class of hypoelliptic
PDE generated by composition of quadratic forms}, Trans. Amer. Math.
Soc., \textbf{258}~(1980), 147-153.




\bibitem[KM]{KM} T. Kilpel\"ainen  and   J. Mal\'y, {\it The Wiener test and potential estimates
for quasilinear elliptic equations}, Acta Math. {\bf 172} (1994), 137--161.


\bibitem[Ko]{Ko}
I. Kombe, \emph{Sharp weighted Hardy-type inequalities and
uncertainty principle inequality on Carnot groups}, preprint, 2005.

\bibitem[Lehr]{Lehr} J. Lehrb\"ack, {\it Pointwise Hardy inequalities and uniformly fat sets}, Proc. Amer. Math. Soc., posted on January 17, 2008,
 PII  S 0002-9939(08)09261-7  (to appear in print).

\bibitem[Le]{Le} J. L. Lewis, {\it Uniformly fat sets}, Trans. Amer. Math. Soc. {\bf 308} (1988), 177--196.

\bibitem[Lo]{Lo} N. Lohou\'e, {\it Une variante de l'in\'egalit\'e de Hardy}, (French), Manuscripta Math.  {\bf 123}
(2007),  73--78.

\bibitem[Ma1]{Ma1} V. G.  Maz'ya, {\it The negative spectrum of the $n$-dimentional Schr\"odinger operator}, Dokl. Akad. Nauk SSSR
{\bf 144} (1962), 721--722 (Russian). English translation: Soviet Math. Dokl. {\bf 3} (1962), 808--810.

\bibitem[Ma2]{Ma2} V. G.  Maz'ya, {\it Sobolev spaces}, Springer-verlag (1985).

\bibitem[Mik]{Mik} P. Mikkonen, {\it On the Wolff potential and quasilinear elliptic equations
involving measures}, Ann. Acad. Sci. Fenn., Ser AI, Math. Dissert. {\bf 104} 1996, 1--71.

\bibitem[MM]{MM}
R. Monti and D. Morbidelli, \emph{Regular domains in homogeneous
groups}, Trans. Amer. Math. Soc. \textbf{357}~(2005), no. 8,
2975--3011.


\bibitem[NSW]{NSW} A. Nagel, E. M. Stein, and S. Wainger, {\it Balls and metrics defined
by vector fields I: Basics properties}, Acta Math. {\bf 155} (1985), 103--147.




\bibitem[NZW]{NZW}
P. Niu, H. Zhang \& Y. Wang, \emph{Hardy type and Rellich type
inequalities on the Heisenberg group}, Proc. Amer. Math. Soc.,
\textbf{129}~(2001), no. 12, 3623-3630.




\bibitem[Ra]{Ra}
P. K. Rashevsky, \emph{Any two points of a totally nonholonomic
space may be connected by an admissible line}, Uch. Zap. Ped. Inst.
im. Liebknechta, Ser. Phys. Math., (Russian) \textbf{2}~(1938),
83-94.



\bibitem[RS]{RS} L. P. Rothschild and E. M. Stein, {\it Hypoelliptic differential
operators and nilpotent groups}, Acta. Math. {\bf 137} (1976), 247--320.


\bibitem[SC]{SC}A. Sanchez-Calle, \emph{Fundamental solutions and geometry of sum of squares of vector fields},
Inv. Math., \textbf{78}~(1984), 143-160.


\bibitem[So]{So}
S. L. Sobolev, \emph{On a theorem in functional analysis}, (Russian)
Math. Sbor. \textbf{46}~(1938), 471-497.

\bibitem[TW]{TW} N. S. Trudinger  and   X. J. Wang, {\it On the weak continuity
of elliptic operators and applications to potential theory}, Amer. J. Math.
{\bf 124} (2002), 369--410.
\bibitem[W]{W} A. Wannebo, {\it Hardy inequalities}, Proc. Amer. Math. Soc. {\bf 109} (1990), 85--95.
\bibitem[St1]{St1} E. M. Stein, {\it Singular Integrals and Differentiability of Functions}.
Princeton University Press, Princeton, New Jersey, (1970).
\bibitem[St2]{St2} E. M. Stein, {\it Harmonic Analysis: Real variable Methods, Orthogonality and Oscillatory Integrals},
Princeton Univ. Press, (1993).


\end{thebibliography}
\end{document}